\documentclass[letterpaper]{amsart}
\usepackage{amsfonts,amssymb,amscd,amsmath,enumerate,url,verbatim}
\usepackage[colorlinks]{hyperref}

\input xypic
\input xy
\xyoption{all}
\usepackage[right=1.5in,left=1.5in,top=1.2in,bottom=1.2in]{geometry}

\newcommand{\agr}[2][{}]{{{#2}^{\sf{g}}_{#1}}}
\newcommand{\wt}[1]{\tilde {#1}}
\def\codim{\operatorname{codim}}

\def\ch{\operatorname{char}}
\def\ov{\overline}
\def\HH{\operatorname{H}}
\def\Ext{\operatorname{Ext}}
\def\ann{\operatorname{ann}}
\def\reg{\operatorname{reg}}
\def\init{\operatorname{in}}
\def\rank{\operatorname{rank}}

\def\m{\mathfrak m}
\def\a{\mathfrak a}
\def\b{\mathfrak b}
\def\n{\mathfrak p}

\def\q{\mathfrak q}

\def\ann{\operatorname{ann}}
\def\socle{\operatorname{socle}}

\def\kk{\pmb{k}}
\def\Tor{\operatorname{To r}}
\def\Ker{\operatorname{Ker}}
\def\Po{\operatorname{P}}

\def\HH{\operatorname{H}}

\newtheorem{theorem}{Theorem}[section]
\newtheorem{lemma}[theorem]{Lemma}
\newtheorem{corollary}[theorem]{Corollary}
\newtheorem{proposition}[theorem]{Proposition}
\newtheorem{proposition-no-advance}[equation]{Proposition}

\newtheorem{claim-no-advance}[equation]{Claim}

\newtheorem{subtheorem}{Theorem}[theorem]
\newtheorem*{Theorem}{Theorem}
\newtheorem*{Main Theorem}{Main Theorem}

\newtheorem*{Lemma}{Lemma}

\newtheorem{quick consequences}[theorem]{Quick Consequences}

\theoremstyle{definition}
\newtheorem*{acknowledgement}{Acknowledgement}
\newtheorem{facts and definitions}[theorem]{Facts and Definitions}

\newtheorem{remark}[theorem]{Remark}

\newtheorem{remark-no-advance}[equation]{Remark}

\newtheorem{careful calculation}[theorem]{Careful Calculation}

\newtheorem{present summary}[theorem]{Present Summary}

\newtheorem{further reductions}[theorem]{Further Reductions}

\newtheorem{chunk}[theorem]{}
\newtheorem{subchunk}[subtheorem]{}

\newtheorem{circle the wagons}[theorem]{Circle the wagons}

\theoremstyle{remark}
\newenvironment{bfchunk}{\begin{chunk}\textit}{\end{chunk}}

\numberwithin{equation}{theorem} \numberwithin{table}{theorem}

\newtheorem*{Case3.1}{Case 3.1}
\newtheorem*{Case3.2}{Case 3.2}
\newtheorem*{Case3.2.1}{Case 3.2.1}
\newtheorem*{Case3.2.2}{Case 3.2.2}
\newtheorem*{Case3.2.2(a)}{Case 3.2.2(a)}
\newtheorem*{Case2.2.2(a)}{Case 2.2.2(a)}
\newtheorem*{Case3.2.2(b)}{Case 3.2.2(b)}
\newtheorem*{Case1.1}{Case 1.1}
\newtheorem*{Case1.2}{Case 1.2}
\newtheorem*{Case1.1.1}{Case 1.1.1}
\newtheorem*{Case1.1.2}{Case 1.1.2}
\newtheorem*{Case1.1.2(a)}{Case 1.1.2(a)}
\newtheorem*{Case1.1.2(b)}{Case 1.1.2(b)}
\newtheorem*{Case2.1}{Case 2.1}
\newtheorem*{Case2.1.1}{Case 2.1.1}
\newtheorem*{Case2.1.2}{Case 2.1.2}
\newtheorem*{Case2.2.1}{Case 2.2.1}
\newtheorem*{Case2.2.2}{Case 2.2.2}
\newtheorem*{Case2.2.3}{Case 2.2.3}
\newtheorem*{Case2.2.4}{Case 2.2.4}
\newtheorem*{Case2.2.2(b)}{Case 2.2.2(b)}
\newtheorem*{Case2.2.1(a)}{Case 2.2.1(a)}
\newtheorem*{Case2.2.1(b)}{Case 2.2.1(b)}
\newtheorem*{Case2.1.3}{Case 2.1.3}
\newtheorem*{Case2.1.3(a)}{Case 2.1.3(a)}
\newtheorem*{Case2.1.3(b)}{Case 2.1.3(b)}
\newtheorem*{Case2.2}{Case 2.2}
\newtheorem*{Case1}{Case 1}
\newtheorem*{Case2}{Case 2}

\newtheorem*{Case(1)}{Case {\rm (1)}}
\newtheorem*{Case(2)}{Case {\rm (2)}}
\newtheorem*{Case(3)}{Case {\rm (3)}}
\newtheorem*{Case(4)}{Case {\rm (4)}}
\newtheorem*{Case(5)}{Case {\rm (5)}}
\newtheorem*{Case(6)}{Case {\rm (6)}}
\newtheorem*{Case(7)}{Case {\rm (7)}}
\newtheorem*{Case(8)}{Case {\rm (8)}}

\makeatletter
 \makeatother

\begin{document}
\title[]{The  absolutely Koszul and Backelin-Roos properties\\ for spaces of quadrics of small codimension}

\author[R.~A.~Maleki]{Rasoul Ahangari Maleki}
\address{Rasoul Ahangari Maleki\\School of Mathematics\\Institute for Research in Fundamental Sciences (IPM)\\ P. O. Box: 19395--5746\\Tehran\\Iran}
     \email{rahangari@ipm.ir}

\author[L.~M.~\c{S}ega]{Liana M.~\c{S}ega}
\address{Liana M.~\c{S}ega\\ Department of Mathematics and Statistics\\
   University of Missouri\\ \linebreak Kansas City\\ MO 64110\\ U.S.A.}
     \email{segal@umkc.edu}

\subjclass[2000]{13D02} \keywords{}
\thanks{This  work was supported in part by a grant from the Simons Foundation (\# 354594, Liana \c Sega) and a grant from IPM  (No.
96130111, Rasoul Ahangari Maleki)}

\begin{abstract}
Let $\kk$ be a field,  $R$ a standard graded quadratic
$\kk$-algebra with $\dim_{\kk}R_2\le 3$,  and let $\ov\kk$ denote an algebraic closure of $\kk$.  We construct a  graded
surjective Golod homomorphism $\varphi \colon  P\to R\otimes_{\kk}\ov{\kk}$ such that $P$
is a complete intersection of codimension at most $3$. Furthermore,
we show that $R$ is absolutely Koszul (that is, every finitely
generated $R$-module has finite linearity defect)  if and only if
$R$ is Koszul if and only if $R$ is not a trivial fiber extension of
a  standard graded $\kk$-algebra with Hilbert series $(1+2t-2t^3)(1-t)^{-1}$.  In particular, we recover earlier results  on the
Koszul property  of  Backelin \cite{Backelin}, Conca \cite{C2} and
D'Al\`i \cite{Alessio}.
\end{abstract}

\maketitle \tableofcontents

\section*{Introduction}
Let $\kk$ be a field and  let $Q$ denote a polynomial ring in
finitely many variables of degree $1$ over $\kk$.  Let $R=\bigoplus
_{i\geqslant 0}R_i$ denote a standard graded $\kk$-algebra such that
$R=Q/I$, where $I$ is a homogeneous ideal contained in $Q_{\geqslant
2}$.  The  algebra $R$ is said to be {\it quadratic} if $I$ is
generated by homogeneous polynomials of degree $2$. In this paper we
study several homological properties of quadratic
algebras satisfying $\dim_{\kk}R_2\le 3$.

A finitely generated graded $R$-module is said to be {\it linear} if
it admits a graded free resolution with maps described by matrices
whose non-zero entries are linear forms. The  algebra $R$ is said to be {\it Koszul}  if
$\kk$ is a linear $R$-module. It is said to be {\it absolutely
Koszul} if every finitely generated $R$-module has finite linearity
defect (equivalently, it has a syzygy whose associated graded module
with respect to the maximal homogeneous ideal of $R$ is linear); see
Herzog and Iyengar \cite{HI} and R\"omer and Iyengar \cite{IR} for more information on these concepts.
Finally, we say that $R$ has the {\it Backelin-Roos} property if
there exists a surjective Golod homomorphism $\varphi\colon P\to R$
of standard graded $\kk$-algebras, with  $P$ a complete
intersection.

The concepts recalled above are related. If $R$ is Koszul, then it is
quadratic. If $R$ is absolutely Koszul, then $R$ is Koszul and has
the property that the Poincar\'e series of all finitely generated
graded $R$-modules are rational, sharing a common denominator, see \cite[Proposition 1.13, Proposition 1.15]{HI}.  Examples of
\,Roos \cite{Roos-good} show that not  all  Koszul algebras have the
latter property. Absolute Koszulness is thus stronger
than Koszulness and it gives information on minimal graded free resolutions of modules, rather than just the resolution of $\kk$.  A method for showing that an algebra is absolutely Koszul is provided in  \cite[Theorem 5.9]{HI} through the following implication: If $R$ has the Backelin-Roos property and $R$
is Koszul, then $R$ is absolutely Koszul. This method
was used recently by  Conca et al.\,\cite{veronese}  to show that
some Veronese subrings of polynomial rings are absolutely Koszul.

We say that $R$ is a {\it trivial fiber extension} of a standard
graded $\kk$-algebra  $R'$ if there exist linear forms $u_1, \dots,
u_n$ in $R$ such that $u_iR_1=0$ for all $i$ and $R'=R/(u_1, \dots,
u_n)$.

\begin{Main Theorem}
Let $R$ be a standard graded quadratic $\kk$-algebra with
$\dim_{\kk}R_2\le 3$. Let $\ov \kk$ denote an algebraic closure of $\kk$.

There  exists then a surjective Golod homomorphim of graded $\ov\kk$-algebras $P\to R\otimes_{\kk}\ov {\kk}$ 
such that $P$ is a complete intersection of codimension at most $3$. Furthermore,  the following statements are equivalent:
\begin{enumerate}[\quad\rm(1)]
\item $R$ is absolutely Koszul;
\item $R$ is Koszul;
\item $R$ is not trivial fiber extension of a standard graded $\kk$-algebra with Hilbert series equal to $(1+2t-2t^3)(1-t)^{-1}$.
\end{enumerate}
\end{Main Theorem}

To prove the theorem, we may assume that $\kk$ is an algebraically closed field. Assume $R$ satisfies the hypothesis of the theorem. We say that $R$ is {\it exceptional} if $R$ has no socle elements in degree $1$ and is not Koszul. With this terminology, the  implication (2)$\iff$(3) says that, if $R$ has no socle elements in degree $1$, then $R$ is exceptional if and only if it has Hilbert series $(1+2t-2t^3)(1-t)^{-1}$.

The implication (2)$\implies$(3) is based on an easy  numerical argument, see Lemma \ref{non-K}.
When $\ch\kk\ne 2$, the implication  (3)$\implies$(2) recovers known results. It can be deduced by putting together work of Backelin
\cite{Backelin}, Conca \cite{C2} and D'Al\`i \cite{Alessio}. More
precisely, the work in \cite{Backelin} covers the case when
$\dim_{\kk}R_2\le 2$ (this case does not require the characteristic
assumption),  the work in \cite{C2} covers the case when
$\dim_{\kk}R_2= 3$ and $R$ is artinian, and the work in
\cite{Alessio} covers the case when $\dim_{\kk}R_2=3$ and $R$ is not
artinian.

When $\ch\kk\notin\{2,3\}$ and $\kk$ is algebraically closed there are
exactly $2$ isomorphism classes of exceptional algebras. When
$\ch\kk=3$ there are three such classes. This  classification is proved by
D'Al\`i \cite[Theorem 3.1]{Alessio}, who also notes that the
classification needs to be enlarged when $\ch\kk=2$. We are able to describe the structure of such rings when $\ch\kk=2$ as well,  see Remark \ref{classify},  but we do not provide a classification in terms of isomorphism classes of $\kk$-algebras.

We now describe the ingredients of the proof  of the Main Theorem. A result of Avramov et al.\,\cite{AKM} gives the Backelin-Roos
property when $\dim_{\kk}R_1=3$, and in particular for all exceptional rings. Since the homological properties of interest
behave well under trivial fiber extensions, see
\ref{reduction-prelim}, we assume for most of the proof that $R$ does not have any socle
elements in degree $1$ and $\dim_{\kk}R_1>3$ whenever $\dim_{\kk}R_2=3$.  With these
assumptions, we proceed to investigate structural properties of the
ring, in terms of understanding the ideal $R_{\geqslant 2}$.  We then identify an appropriate complete
intersection $P$ such that the  induced homomorphism $P\to R$ is
Golod.

Our methods for understanding the structure of $R$ are inspired by
and use some of the approaches and results in \cite{C1}, \cite{C2}
and \cite{Alessio}. We develop new considerations in order to allow
for the treatment of the case $\ch\kk=2$ as well. Akin to
\cite{Alessio} and \cite{C2}, we need to consider a large number of
cases. However, we do not develop structural results that are as
detailed as the ones in {\it loc.\,cit.}, and we only establish
structure as needed to be able to construct the desired Golod
homomorphism.

The  Golod property is established as follows. When $P$ is a
quadratic complete intersection and $\varphi\colon P\to R$ is a
surjective homomorphism of standard graded $\kk$-algebras with
$\Ker(\varphi)\subseteq P_{\geqslant 2}$,  we show in Proposition \ref{main-criterion} that $R$ is Koszul
and $\varphi$ is Golod if and only if the map
$$
\nu^P\colon \Tor^P(\m^2,\kk)\to \Tor^P(\m,\kk)
$$
induced by the inclusion $\m^2\subseteq \m$ is zero, where $\m$
denotes the maximal homogeneous ideal of $R$. The  minimal free
resolution of $\kk$ over the complete intersection $P$ is well
understood due to a construction of Tate and it has a structure of DG $\Gamma$-algebra. We denote
by $\mathcal D$ the DG-algebra obtained by tensoring this minimal
free resolution with $R$ and we identify $\nu^P$ with the map
$\nu(\m\mathcal D)\colon \HH(\m^2\mathcal D)\to \HH(\m\mathcal D)$
induced by the inclusion $\m^2\mathcal D\subseteq \m\mathcal D$. We
use the DG $\Gamma$-algebra structure of $\mathcal D$ to develop
several results that allow us to prove that $\nu(\m\mathcal D)=0$, under various structural conditions on the ring $R$. The work of Avramov et al.\,\cite{AIS} provided some of the inspiration in this direction.  A particular aspect of our approach is that the
complete intersection $P$ is only identified at the end of each case
considered. We  identify a DG-algebra $\mathcal D$ first, we prove
$\nu(\m\mathcal D)=0$, and then we define $P$ to correspond to this
algebra.  For this reason, many of the preliminary results are only
stated in terms of  DG-algebras, rather than Golod homomorphisms.

We also give a local version of the Main Theorem. To describe it, assume now that $R$ is a
commutative local Noetherian ring with maximal ideal $\m$. We say
that $R$ is {\it Koszul} if its associated graded algebra $\agr R$
with respect to $\m$ is Koszul. The  absolutely Koszul property can
be defined for local rings in the same way we defined it earlier for
graded rings. In Lemma \ref{Golod when Koszul} we show that if $R$ is Koszul
and $\agr R$  has the Backelin-Roos property, then the completion
$\widehat R$ with respect to $\m$ has the Backelin-Roos property
as well. This  lemma, together with the Main Theorem, settles
the case when $R$ is Koszul in the theorem below. The non-Koszul cases are handled using the information on the Hilbert series of exceptional rings given by the implication (2)$\implies$(3) of the Main Theorem.  We show:

\begin{Theorem}{\bf (Local Version)}
Let $R$ be a commutative local Noetherian ring with maximal ideal
$\m$.  Assume $\dim_{R/\m}\m^2/\m^3\le 3$ and $\agr R$ is
quadratic.

There exists then a flat homomorphism of local rings $R\to R'$, where $R'$ is a local ring with  maximal ideal $R'\m$ and has the property that there  exists a surjective Golod homomorphim $P\to R'$ of local rings, with $P$
a complete intersection of codimension at most $3$.  Furthermore, if $R$ is Koszul then it is absolutely Koszul.
\end{Theorem}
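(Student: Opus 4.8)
The plan is to transfer everything to the associated graded ring $\agr R$, to which the graded Main Theorem applies. Set $\kk:=R/\m$; then $\agr R$ is a standard graded quadratic $\kk$-algebra with $\dim_\kk(\agr R)_2=\dim_{R/\m}\m^2/\m^3\le 3$, and the associated graded ring of $\widehat R$ coincides with $\agr R$. In particular $\widehat R$ is Koszul exactly when $R$ is and --- using the standard fact that the linearity defect of a finitely generated module is unchanged under completion --- $\widehat R$ is absolutely Koszul exactly when $R$ is. So it suffices to produce a surjective Golod homomorphism $P\to\widehat R$ with $P$ a complete intersection of codimension at most $3$ and, when $R$ is Koszul, to show $\widehat R$ is absolutely Koszul. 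I would split into the cases $R$ Koszul and $R$ not Koszul.

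First suppose $R$ is Koszul, i.e.\ $\agr R$ is Koszul. By the Main Theorem applied to $\agr R$, this algebra is absolutely Koszul and, via the surjective Golod homomorphism $P_1\to\agr R$ supplied by the first assertion of that theorem (with $P_1$ a complete intersection of codimension $\le 3$), it has the Backelin--Roos property. Lemma \ref{Golod when Koszul} then produces a surjective Golod homomorphism $P\to\widehat R$ with $P$ a complete intersection; inspecting its proof, which lifts $P_1$, one keeps $\codim P\le 3$. Finally $\widehat R$ is Koszul and has the Backelin--Roos property, so by \cite[Theorem 5.9]{HI} it is absolutely Koszul, hence so is $R$. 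This disposes of the theorem in the Koszul case.

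Next suppose $R$ is not Koszul, so $\agr R$ is not Koszul. The implication (2)$\Rightarrow$(3) of the Main Theorem --- the numerical Lemma \ref{non-K} --- then forces $\agr R$ to be a trivial fiber extension, by socle elements $u_1,\dots,u_m$ in degree $1$, of a standard graded $\kk$-algebra with Hilbert series $(1+2t-2t^3)(1-t)^{-1}$; hence
\[
H_{\widehat R}(t)=H_{\agr R}(t)=\frac{1+2t-2t^3}{1-t}+mt=\frac{1+(m+2)t-mt^2-2t^3}{1-t}.
\]
Evaluating the numerator at $t=1$ gives $e(\widehat R)=1$, while $\dim\widehat R=1$ and $\dim_\kk\m/\m^2=m+3$; equivalently $\widehat R$ becomes a one-dimensional regular local ring after killing a finite-length ideal, and its ``exceptional'' structure sits in low degrees and is pinned down by the Hilbert function. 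Using this rigidity I would revisit the construction of the Golod homomorphism in the proof of the graded Main Theorem --- whose \emph{existence} part never invokes Koszulness --- and check that, in the cases matching this Hilbert series, it carries over to the local ring $\widehat R$: one identifies a complete intersection $P$ with $\dim_\kk\m_P/\m_P^2=m+3$ and $\codim P\le 3$, together with a surjection $P\to\widehat R$ with $\Ker\subseteq\m_P^2$, and verifies that it is Golod through the DG $\Gamma$-algebra $\mathcal D$ techniques of the paper. Here $P$ cannot be quadratic --- by Proposition \ref{main-criterion} a quadratic choice would make $\widehat R$, hence $R$, Koszul --- and in the extreme case one expects $P$ to be a regular local ring, i.e.\ $\widehat R$ a Golod ring.

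The main obstacle is precisely this non-Koszul case. The engine of the paper, Proposition \ref{main-criterion}, certifies Koszulness of $R$ together with Golodness of $\varphi$, and is useless once $R$ is not Koszul; so the exceptional local rings must be handled either by re-running the relevant (largely numerical) part of the proof of the graded Main Theorem, now with $H_{\widehat R}$ known, or by a direct argument for one-dimensional local rings of multiplicity $1$ with the Hilbert series above. Two points need care: $\widehat R$ need not be graded --- only $\agr R$ is quadratic, so the defining relations of $\widehat R$ are quadratic merely modulo higher order terms --- and the characteristic-$2$ phenomena recorded in Remark \ref{classify} must be accommodated just as in the graded setting.
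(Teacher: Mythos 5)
Your treatment of the Koszul case is correct and is essentially the paper's argument: apply the Main Theorem to $\agr R$, lift the Golod map via Lemma \ref{Golod when Koszul}, and invoke \cite[Theorem 5.9]{HI}. The non-Koszul case, however, is left as a plan rather than a proof, and the plan as stated would not go through. You propose to ``re-run'' the construction of the Golod homomorphism from the graded Main Theorem on $\widehat R$, but, as you yourself observe, the paper's entire DG-algebra engine (Corollary \ref{tool}, Proposition \ref{main-criterion}) certifies Golodness \emph{and} Koszulness simultaneously, so it cannot produce anything for an exceptional ring; and the numerical information $H_{\agr R}(t)=(1+2t-2t^3)(1-t)^{-1}+mt$ by itself does not yield a complete intersection mapping Golodly onto $\widehat R$. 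Two ingredients are missing. First, the trivial-fiber-extension structure of $\agr R$ does not automatically descend to $\widehat R$: a degree-one socle element of $\agr R$ need not be the initial form of an element $v\in\m\smallsetminus\m^2$ with $v\m=0$. The paper handles this with Lemma \ref{socle}, whose hypotheses are verified using the structure of exceptional rings from Theorem \ref{exceptional}; this is what legitimizes reducing, via \ref{reduction-prelim} applied to the \emph{local} ring, to the case $\socle(R)\subseteq\m^2$, where $\agr R$ then has no degree-one socle and hence embedding dimension exactly $3$.

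Second, and decisively, once the embedding dimension is $3$ the paper does not revisit its graded constructions at all: it invokes the result of Avramov--Kustin--Miller \cite[Proposition 6.1]{AKM}, which produces a surjective Golod homomorphism onto any local ring of embedding dimension $3$ from a complete intersection of codimension at most $3$, with no Koszulness hypothesis. This external input is exactly what fills the hole you flag as ``the main obstacle,'' and your proposal contains no substitute for it. (Your side remark that $P$ ``cannot be quadratic'' is also not warranted: Proposition \ref{main-criterion} asserts an equivalence between $\nu^P(\m)=0$ and the \emph{conjunction} of Golodness and Koszulness, so a Golod map from a quadratic complete intersection does not by itself force $R$ to be Koszul; and the exceptional rings need not be Golod, so one cannot expect $P$ regular in general.) To complete your argument you would need to add the reduction via Lemma \ref{socle} and then quote \cite{AKM} for embedding dimension $3$.
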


In view of a result of Levin, see for example \cite[Proposition 5.18]{AKM}, this result adds the rings satisfying its hypothesis to a growing list of local rings for which it is known that  all finitely generated modules have rational Poincar\'e series, sharing a common denominator.

The  structure of the paper is as follows. Section
\ref{Golod-Koszul-Tate} explains the terminology and presents the
strategy for  establishing the Golod property for various
homomorphisms. Section \ref{key}  develops key homological tools involving DG-algebras that enable us to use the strategy. The  components of the
proof of the Main Theorem are spread across Sections \ref{artinian}, \ref{dim2}, \ref{dim3-nonartinian} and \ref{exceptional-sec}, as
indicated by the title of each section. The  proof of the Main
Theorem is given at the end of Section \ref{exceptional-sec}, where we also describe the structure of exceptional rings. The  local version of the Main Theorem  is
proved in Section \ref{local rings}.
\begin{acknowledgement}
We are grateful to the referee, who read our manuscript with remarkable patience and attention, and helped us straigthen out arguments and typos. We also wish to thank Aldo Conca and Alessio D'Al\`i for talking to us about the part of their work that is used here.
\end{acknowledgement}

\section{Preliminaries: Golod homomorphisms, Koszul rings and Tate resolutions}
\label{Golod-Koszul-Tate} In this section we present
concepts and preliminary results and we introduce notation and
conventions. To  begin  with, we indicate that the terminology of
{\it local ring}  includes the convention that the ring is
commutative and Noetherian. If $\kk$ is a field, a {\it standard
graded $\kk$-algebra}  is a graded algebra $R=\oplus_{i\geqslant
0}R_i$ with $R_0=\kk$ and $\dim_{\kk}R_1<\infty$, and such that $R$
is generated  by $R_1$ as a $\kk$-algebra. If $R$ is a standard
graded $\kk$-algebra, then $R$ can be written as $R=Q/I$ with $Q$ a
polynomial ring over $\kk$ in finitely many variables in degree $1$
and $I$ a homogeneous ideal generated by forms of degree at least
$2$. We say that $R$ is {\it quadratic} if $I$ is
generated by forms of degree $2$ (quadrics).

\begin{bfchunk}{The  ring $R$.}
\label{R}Let $\kk$ be a field. Let $(R,\m,\kk)$ denote a local  ring
with maximal ideal $\m$ and residue field $\kk$ or a standard graded
$\kk$-algebra with maximal homogenous ideal $\m=R_{\geqslant 1}$. In
the latter case, we employ the assumption that all objects involved
in our statements (such as modules, free resolutions, homology,
homomorphisms, etc.) are also graded.  When we want only the graded
hypothesis to be in effect,  we simply say ``$R$ is graded''. Some
of the results and definitions that we use and recall below are
stated in the original references only for local rings. It is
however a standard observation that such results can be translated
in the graded case, with assumptions and notation as described
above. We will expand the statements as to cover the graded case as
well, mostly without commenting on this issue.

Let $M$ be a finitely generated $R$-module. We denote by
$$\sum_{i\geqslant 0}\rank_{\kk}(\Tor_i^R(M,\kk))z^i\in \mathbb Z[[z]]$$
the  {\it Poincar\'e
series} $\Po_M^R(z)$ of $M$.  When $R$ is graded we also consider the {\it bigraded Poincar\'e
series}, which is denoted as follows:
$$\Po^R_M(z,t)=\sum_{i\geqslant
0, j\in \mathbb Z}\rank_{\kk}(\Tor_{i}^R(M,\kk)_j)z^it^j\in \mathbb Z[t,t^{-1}][[z]]\,,$$ with $j$ denoting
internal degree. Also, we denote by
$$
H_M(t)=\sum_{j\in \mathbb Z}\dim_{\kk}(M_j)t^j\in \mathbb Z[[t]][t^{-1}]
$$
the {\it Hilbert series} of $M$.

We let $\agr R$, respectively $\agr M$, denote the associated graded algebra, respectively asscociated graded module, with respect to
$\m$; $\agr R$ is a standard graded $\kk$-algebra with maximal homogeneous
ideal $({\agr R})_{\geqslant 1}$ and $\agr M$ is a graded $\agr R$-module.
\end{bfchunk}

\begin{bfchunk}{Koszul and absolutely Koszul rings.}
\label{Koszul-defs} We say that a finitely generated $R$-module $M$
is {\it Koszul} if $\agr M$ has a
linear graded free resolution over $\agr R$, meaning that the
non-zero entries of the matrices describing the differentials are
forms of degree $1$. We say that the ring $R$ is {\it Koszul} if
$\kk$ is a Koszul module. This  definition agrees with the classical
definition of a Koszul algebra when $R$ is graded.  Koszul algebras have been studied extensively. We refer to Conca {\it et.\,al} \cite{CDR} for some recent developments, as well as the classical properties recorded below.
\begin{subchunk}
If $R$ is Koszul, then $\agr R$ is quadratic.
\end{subchunk}
\begin{subchunk}
\label{PH} A standard graded $\kk$-algebra $R$ is Koszul if and only
if $\Po_{\kk}^R(z)H_R(-z)=1$.
\end{subchunk}
\begin{subchunk}
\label{Grobner}
If $R$ is graded and the defining ideal $I$ of $R$ has a Gr\"obner basis of quadrics with repect to some term order (or in other words, $R$ is G-quadratic), then $R$ is Koszul. In particular, $R$ is Koszul when $I$ is generated by quadratic monomials.
\end{subchunk}

 We say that $R$
is {\it absolutely Koszul} if every finitely generated $R$-module
has a Koszul syzygy or, in other words, it has  finite {\it
linearity defect}; see \cite{HI} for the definition of linearity
defect.  A consequence of the definition is as follows, cf.\,\cite[Proposition 1.15]{HI}:
\begin{subchunk}
If $R$ is an absolutely Koszul standard graded $\kk$-algebra, then $\Po_M^R(z)H_R(-z)\in \mathbb Z[z]$ for every finitely generated graded $R$-module $M$.
\end{subchunk}
\begin{subchunk}
\label{flat-absKoszul}
Let $R\to R'$ be a flat ring homomorphism (of local or graded rings), where $(R',\m',\kk')$ is such that $\m'=R'\m$. It follows from \cite[Remark 1.8]{AIS} that the $R$-module $M$ and the $R'$-module $M\otimes_RR'$ have the same linearity defect. In particular, $R$ is Koszul if and only if $R'$ is Koszul and, if $R'$ is absolutely Koszul, then $R$ is absolutely Koszul. 
\end{subchunk}
\end{bfchunk}

\begin{bfchunk}{The  maps $\nu$.}
\label{nu-def} Let $M$ be a finitely generated $R$-module. For each
$i\ge 0$ we let $\nu^R_i(M)$ denote the map
\begin{equation}
\nu^R_i(M)\colon \Tor_i^R(\m M,\kk)\to \Tor_i^R(M,\kk)
\end{equation}
induced by the inclusion $\m M\subseteq M$. We write $\nu^R(M)=0$ to
indicate that the maps  $\nu^R_i(M)$ are identically zero for all
$i\ge 0$. The  vanishing of these maps can be used to detect the
Koszul property, as described next.
\begin{subchunk}
\label{Koszul-nu} By \c{S}ega \cite[Proposition 7.5]{powers}, $R$ is
Koszul if and only if $\nu^R(\m^n)=0$ for all $n\ge 0$.
\end{subchunk}
\begin{subchunk}
\label{graded-Koszul-nu}
 It is known that  $\nu^R(\m)=0$ if and only if the Yoneda algebra $\Ext_R(\kk,\kk)$ is generated in degree $1$, cf.\! Roos \cite[Corollary 1]{Ro}. In particular,  if $R$ is graded, then  $\nu^R(\m)=0$ if and only if $R$ is Koszul.
\end{subchunk}
\begin{subchunk}
\label{powers} If $R$ is Koszul and $M$ is a finitely generated
$R$-module, it follows from \cite[Theorem 3.2]{powers} that the
Castelnuovo-Mumford regularity of $\agr M$ over $\agr R$ is given by
the formula:
\begin{equation}
\label{CM-reg} \reg_{\agr R}(\agr M)=\inf\{s\ge 0\mid
\text{$\nu^R(\m^nM)=0$ for all $n\ge s$}\}\,.
\end{equation}
\end{subchunk}
\end{bfchunk}

\begin{bfchunk}{The homomorphism $\varphi$; Golod rings and homomorphisms.}
\label{phi}
Let $\varphi\colon (P,\n,\kk)\to (R,\m,\kk)$ denote a surjective
homomorphism of local rings or of standard graded $\kk$-algebras.   We let $\agr\varphi\colon
\agr P \to \agr R$ denote the induced map between the associated
graded algebras; note that $\varphi=\agr \varphi$ in the graded
case.
\begin{subchunk} By a result of Serre, cf.\,Avramov \cite[Prop.
3.3.2]{Avr98} for example, the following coefficientwise inequality
holds:
\begin{equation}
\label{Golod-def}
\Po_{\kk}^R(z)\preccurlyeq\frac{\Po_{\kk}^P(z)}{1-z(\Po_R^P(z)-1)}\,.
\end{equation}
In the graded case, the  bigraded version of this inequality is as
follows:
\begin{equation}
\label{Golod-def-graded}
\Po_{\kk}^R(z,t)\preccurlyeq\frac{\Po_{\kk}^P(z,t)}{1-z(\Po_R^P(z,t)-1)}\,.
\end{equation}
 When $R$ is local, we say that the homomorphism $\varphi$ is {\it Golod} when equality holds in \eqref{Golod-def}. In the graded case, we say that $\varphi$  is  {\it Golod}  if equality holds in \eqref{Golod-def-graded}. When  $P$ is a regular ring, we have that  $R$ is a Golod ring if and only if $\varphi$ is Golod, see \cite[Section 5]{Avr98}; we will use this equivalence for a working definition of a Golod ring.

In general, it is difficult to prove directly the equalities in
\eqref{Golod-def} and \eqref{Golod-def-graded},  and  methods such
as constructing trivial Massey operations need to be employed. We
record below two results that we use in establishing the Golod
property for $\varphi$.
\end{subchunk}

\begin{subchunk}
\label{2-linear} Assume that $R$ is graded, and hence $\varphi$ is a
homomorphism of standard graded $\kk$-algebras.  By \cite[5.8]{HI}
the following are equivalent:
\begin{enumerate}
\item $\varphi$ is Golod and $R$ is Koszul;
\item The  $P$-module $\Ker(\varphi)$ has a $2$-linear resolution (hence $\reg_P(R)=1$) and $P$ is Koszul.
\end{enumerate}
\end{subchunk}

\begin{subchunk}
\label{Golod-lemma} (Rossi, \c Sega \cite[Lemma 1.2]{RS}) Suppose
there exists a positive integer $a$ such that
\begin{enumerate}[\quad\rm(1)]
\item The   map $\Tor_i^P(R,k)\to \Tor_i^P(R/\m^{a},k)$ induced by the projection $R\to R/\m^a$ is zero for all $i>0$.
\item The  map $\Tor_i^P(\m^{2a},k)\to \Tor_i^P(\m^{a},k)$  induced by the inclusion $\m^{2a}\hookrightarrow \m^a$ is zero for all $i\ge 0$.
\end{enumerate}
 Then  $\varphi$ is a Golod homomorphism.
\end{subchunk}
\end{bfchunk}

\begin{lemma}
\label{golod-criterion} Assume $\text{Ker}(\varphi)\subseteq \n^2$. If $\nu^P(\n)=0=\nu^P(\m)$, then $\varphi$
is Golod.
\end{lemma}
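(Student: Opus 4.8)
The plan is to apply the Golod criterion of Rossi and \c{S}ega recorded in \ref{Golod-lemma}, taking the integer $a$ there to be $1$. With $a=1$, condition (2) of \ref{Golod-lemma} asks exactly that $\Tor^P_i(\m^2,\kk)\to\Tor^P_i(\m,\kk)$ vanish for every $i\ge 0$; since $\varphi$ is surjective we have $\n\m=\m^2$ inside $\m$, so this map is precisely $\nu^P_i(\m)$, and condition (2) is just the hypothesis $\nu^P(\m)=0$. Hence the real work is to establish condition (1): the map $\Tor^P_i(R,\kk)\to\Tor^P_i(\kk,\kk)$ induced by the projection $R\to R/\m=\kk$ is zero for all $i>0$.

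To do this, write $I=\Ker(\varphi)\subseteq\n^2$, so that $\varphi$ restricts to a surjection $\n\twoheadrightarrow\m$ with kernel $I$. I would compare the two augmentation sequences $0\to\n\to P\to\kk\to 0$ and $0\to\m\to R\to\kk\to 0$, which fit into a commutative ladder via $\varphi$. Applying $\Tor^P(-,\kk)$ and using $\Tor^P_i(P,\kk)=0$ for $i\ge 1$, the connecting map $\Tor^P_i(\kk,\kk)\to\Tor^P_{i-1}(\n,\kk)$ of the first sequence is an isomorphism for every $i\ge 1$; by naturality of connecting homomorphisms, the connecting map $\Tor^P_i(\kk,\kk)\to\Tor^P_{i-1}(\m,\kk)$ of the second sequence is this isomorphism followed by the map $a_{i-1}\colon\Tor^P_{i-1}(\n,\kk)\to\Tor^P_{i-1}(\m,\kk)$ induced by $\varphi|_{\n}$. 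A short diagram chase in the long exact sequence of $0\to\m\to R\to\kk\to 0$ shows that the image of $\Tor^P_i(R,\kk)\to\Tor^P_i(\kk,\kk)$ is the kernel of that connecting map; so condition (1) holds as soon as every $a_j$, $j\ge 0$, is injective.

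To finish, I would check that each $a_j$ is injective using the long exact sequence of $0\to I\to\n\xrightarrow{\varphi}\m\to 0$: injectivity of $a_j$ amounts to the vanishing of $\Tor^P_j(I,\kk)\to\Tor^P_j(\n,\kk)$, and since $I\subseteq\n^2$ this map factors as $\Tor^P_j(I,\kk)\to\Tor^P_j(\n^2,\kk)\xrightarrow{\nu^P_j(\n)}\Tor^P_j(\n,\kk)$, whose second arrow is zero by the hypothesis $\nu^P(\n)=0$. Thus every $a_j$ is injective, condition (1) holds, and \ref{Golod-lemma} yields that $\varphi$ is Golod. The argument only uses naturality and the long exact sequences for $\Tor^P$, so it works identically in the local and graded cases; the points requiring a little care are that $a=1$ already suffices in \ref{Golod-lemma} and that the factorization through $\n^2$ genuinely identifies the relevant connecting-sequence map with $\nu^P(\n)$ — which is exactly what the above makes explicit, so I do not expect a real obstacle here.
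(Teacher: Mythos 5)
Your proof is correct and follows the same strategy as the paper's: apply the Rossi--\c{S}ega criterion \ref{Golod-lemma} with $a=1$, observe that condition (2) is exactly the hypothesis $\nu^P(\m)=0$, and reduce condition (1) to $\nu^P(\n)=0$ using $\Ker(\varphi)\subseteq\n^2$. The only (immaterial) difference is the bookkeeping for condition (1): the paper factors $\Tor_i^P(R,\kk)\to\Tor_i^P(\kk,\kk)$ through $\Tor_i^P(R/\m^2,\kk)\to\Tor_i^P(R/\m,\kk)$ and identifies the latter with $\nu_{i-1}^P(\n)$, whereas you chase the connecting homomorphisms together with the sequence $0\to \Ker(\varphi)\to\n\to\m\to 0$ to the same effect.
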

\begin{proof}
We apply \ref{Golod-lemma} with $a=1$.  Condition (2) is satisfied,
given that $\nu^P(\m)=0$.  Let $i>0$. To  verify (1), we see that the map
$\Tor_i^P(R,\kk)\to \Tor_i^P(R/\m,\kk)$  factors through the
map $\Tor_i^P(R/\m^2,\kk)\to \Tor_i^P(R/\m,k)$ induced by the projection $R/\m^2\to R/\m$. Since  $\text{Ker}(\varphi)\subseteq \n^2$, this map can be
identified with $\nu_{i-1}^P(\n)$,  and it is thus zero for all
$i>0$.
\end{proof}

\begin{bfchunk}{Complete intersections.}
We say $P$ is a {\it complete intersection} of {\it codimension} $d$
if $P=Q/(f_1, \dots, f_d)$, where $(Q,\q,\kk)$ is  a regular local
ring (when $R$ is local) or $(Q,\q,\kk)$ is a polynomial ring over $\kk$ (when $R$ is
graded) and  $f_1, \dots, f_d$ is a regular sequence with $f_i\in\n^2$ for all $i$. The elements $f_i$ are assumed homogeneous in the graded case.  Note that we do not consider completions in the
definition in the local case.
When $R$ is graded and $\deg(f_i)=2$ for all $i$, we say that $P$ is
a {\it complete intersection of quadrics} or a {\it quadratic
complete intersection}.
\begin{subchunk}
\label{Golod-ci} If $R$ is Koszul, $\varphi$ is Golod and $P$ is a
complete intersection, then $R$ is absolutely Koszul, by
\cite[Theorem 5.9]{HI}.
\end{subchunk}
\begin{subchunk}
\label{nu-ci}  If $R$ is graded, $P$ is a complete intersection
of quadrics and $\Ker(\varphi)\subseteq \n^2$, then  $\nu^P(\m)=0$ implies $\nu^R(\m)=0$. The
explanation for this statement is postponed to \ref{proof-of-nu-ci}.
\end{subchunk}
\end{bfchunk}

The  next result is our main criterion for checking that a graded
ring is absolutely Koszul.

\begin{proposition}
\label{main-criterion} Assume that $R$ is graded,  $\Ker(\varphi)\subseteq \n^2$  and $P$ is  a
complete intersection of quadrics. The  following are equivalent:
\begin{enumerate}[\quad\rm(1)]
\item $\varphi$ is Golod and $R$ is Koszul;
\item $\nu^P(\m)=0$.
\end{enumerate}
If these assumptions hold, then $R$ is absolutely Koszul.
\end{proposition}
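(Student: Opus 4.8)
The plan is to prove the equivalence (1)$\iff$(2); the last assertion then follows immediately. The one fact used throughout is that $P$, being a complete intersection of quadrics, is Koszul (for instance, Tate's resolution of $\kk$ over $P$ yields $\Po^P_\kk(z)\,H_P(-z)=1$, so \ref{PH} applies); in particular $\nu^P(\n)=0$ by \ref{graded-Koszul-nu}.

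For (2)$\implies$(1) I would argue as follows. Assume $\nu^P(\m)=0$. Together with $\nu^P(\n)=0$ and the standing hypothesis $\Ker(\varphi)\subseteq\n^2$, Lemma \ref{golod-criterion} shows that $\varphi$ is Golod. Next, \ref{nu-ci}---whose hypotheses ($R$ graded, $P$ a complete intersection of quadrics, $\Ker(\varphi)\subseteq\n^2$) are exactly our assumptions---gives $\nu^R(\m)=0$, and hence $R$ is Koszul by \ref{graded-Koszul-nu}. So (1) holds.

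The \emph{substantive} direction is (1)$\implies$(2), and here I would run a degree count. From (1) and \ref{2-linear}, the $P$-module $\Ker(\varphi)$ has a $2$-linear resolution; the case $\Ker(\varphi)=0$ is trivial (then $R=P$ and $\nu^P(\m)=\nu^P(\n)=0$ since $P$ is Koszul), so assume $\Ker(\varphi)\ne 0$, whence $\reg_P(R)=1$. Applying the standard regularity estimate to the exact sequence $0\to\m\to R\to\kk\to 0$, using $\reg_P(R)=1$ and $\reg_P(\kk)=0$ (as $P$ is Koszul), gives $\reg_P(\m)\le 1$; thus $\Tor^P_i(\m,\kk)$ is supported in internal degrees $\le i+1$ for every $i$. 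On the other hand $\m^2$ is generated in degree $2$ (or is zero, a vacuous case), so by minimality of graded free resolutions $\Tor^P_i(\m^2,\kk)$ is supported in internal degrees $\ge i+2$. Since $\nu^P_i(\m)\colon\Tor^P_i(\m^2,\kk)\to\Tor^P_i(\m,\kk)$ is a degree-preserving map of graded $\kk$-vector spaces whose source and target have disjoint supports, it is zero; hence $\nu^P(\m)=0$.

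Finally, once (1) (equivalently (2)) holds, $\varphi$ is Golod, $R$ is Koszul, and $P$ is a complete intersection, so $R$ is absolutely Koszul by \ref{Golod-ci}. The only genuinely non-formal ingredient is the degree comparison above: the point is that Koszulness of $P$ combined with $\reg_P(R)=1$ pins the minimal $P$-resolution of $\m$ into the linear strand while that of $\m^2$ necessarily begins one internal degree higher, after which the vanishing of $\nu^P(\m)$ is forced. Everything else is a matter of quoting \ref{golod-criterion}, \ref{nu-ci}, \ref{graded-Koszul-nu}, \ref{2-linear} and \ref{Golod-ci}, with the degenerate cases $\Ker(\varphi)=0$ and $\m^2=0$ dispatched separately.
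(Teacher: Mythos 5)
Your proposal is correct. The direction (2)$\implies$(1) and the final assertion are argued exactly as in the paper: $P$ Koszul gives $\nu^P(\n)=0$ via \ref{graded-Koszul-nu}, Lemma \ref{golod-criterion} gives the Golod property, \ref{nu-ci} transfers $\nu^P(\m)=0$ to $\nu^R(\m)=0$, and \ref{Golod-ci} closes the loop. Where you genuinely diverge is in (1)$\implies$(2). The paper extracts $\reg_P(R)=1$ from \ref{2-linear} and then simply quotes the regularity formula \ref{powers} (from \cite[Theorem 3.2]{powers}), which immediately yields $\nu^P(\m^n)=0$ for all $n\ge 1$. You instead make the vanishing explicit by a degree count: $\reg_P(\m)\le 1$ (from the sequence $0\to\m\to R\to\kk\to 0$, $\reg_P(R)=1$ and $\reg_P(\kk)=0$) confines $\Tor_i^P(\m,\kk)$ to internal degrees $\le i+1$, while $\m^2$ being generated in degree $2$ forces $\Tor_i^P(\m^2,\kk)$ into degrees $\ge i+2$, so the degree-preserving map $\nu_i^P(\m)$ has no choice but to vanish. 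This is sound, self-contained, and essentially unpacks in the graded setting what the cited theorem provides in greater generality (it is stated for associated graded modules over possibly local rings); the paper's citation buys brevity and the stronger conclusion $\nu^P(\m^n)=0$ for all $n\ge 1$, which it uses elsewhere, whereas your argument buys transparency and avoids an external reference. Your separate treatment of the degenerate cases $\Ker(\varphi)=0$ and $\m^2=0$ is a small point of care the paper glosses over.
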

\begin{proof}
If (1) holds, then $\reg_P(R)=1$ by \ref{2-linear} and then
$\nu^P(\m^n)=0$ for all $n\ge 1$ by \ref{powers}. Conversely, since
$P$ is Koszul we have $\nu^P(\n)=0$ by \ref{graded-Koszul-nu}. Thus,
if (2) holds then $\varphi$ is Golod by Lemma \ref{golod-criterion}.
To  see that $R$ is Koszul, note that  $\nu^P(\m)=0$ implies
$\nu^R(\m)=0$ by \ref{nu-ci}, hence $R$ is Koszul by
\ref{Koszul-nu}. Apply \ref{Golod-ci} for the last statement.
\end{proof}

The  next result contributes towards the proof of the  local
version of the main theorem.

\begin{proposition}
\label{graded-help} If $\agr \varphi$ is Golod and $R$ is Koszul,
then $\varphi$ is also Golod.
\end{proposition}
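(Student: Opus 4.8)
The plan is to obtain the Golod property of $\varphi$ as a formal consequence of Serre's inequality, by comparing the Poincar\'e series attached to $\varphi$ with those attached to $\agr\varphi$. Write $\n$ and $\m$ for the maximal ideals of $P$ and $R$. Since $\varphi$ is a surjection of local rings, $\varphi(\n^i)=\m^i$ for all $i$, so the $\n$-adic filtration on $R$, regarded as a $P$-module, has associated graded module equal to $\agr R$, viewed as an $\agr P$-module via $\agr\varphi$. Serre's inequality \eqref{Golod-def} for $\varphi$ reads
\[
\Po^R_\kk(z)\preccurlyeq\frac{\Po^P_\kk(z)}{1-z(\Po^P_R(z)-1)},
\]
and $\varphi$ is Golod precisely when this is an equality, so the whole task is to prove the opposite inequality.

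For this I would use three ingredients. First, the standard fact that passing to associated graded rings and modules does not decrease Betti numbers: coefficientwise one has $\Po^P_\kk(z)\preccurlyeq\Po^{\agr P}_\kk(z)$ and, applied to the $P$-module $R$, $\Po^P_R(z)\preccurlyeq\Po^{\agr P}_{\agr R}(z)$. Since $\Po^P_R(z)-1$ and $\Po^{\agr P}_{\agr R}(z)-1$ lie in $z\,\mathbb Z_{\ge 0}[[z]]$, and the operation $u\mapsto(1-zu)^{-1}=\sum_{n\ge 0}z^n u^n$ is monotone for coefficientwise comparison on such series, combining the two inequalities yields
\[
\frac{\Po^P_\kk(z)}{1-z(\Po^P_R(z)-1)}\preccurlyeq\frac{\Po^{\agr P}_\kk(z)}{1-z(\Po^{\agr P}_{\agr R}(z)-1)}.
\]
Second, since $\agr\varphi$ is Golod, \eqref{Golod-def} is an equality for $\agr\varphi$, so the right-hand side above equals $\Po^{\agr R}_\kk(z)$. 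Third, since $R$ is Koszul, $\Po^R_\kk(z)=\Po^{\agr R}_\kk(z)$. Concatenating these, $\Po^R_\kk(z)\preccurlyeq\Po^P_\kk(z)\bigl(1-z(\Po^P_R(z)-1)\bigr)^{-1}\preccurlyeq\Po^{\agr R}_\kk(z)=\Po^R_\kk(z)$, so every comparison in this chain is an equality; in particular \eqref{Golod-def} holds with equality for $\varphi$, which is the definition of $\varphi$ being Golod.

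The only step that is not formal bookkeeping is the identity $\Po^R_\kk(z)=\Po^{\agr R}_\kk(z)$. In general one has only $\Po^R_\kk(z)\preccurlyeq\Po^{\agr R}_\kk(z)$, and equality is exactly the statement that the minimal free resolution of $\kk$ over $R$ has linear associated graded complex, that is, $\kk$ has linearity defect $0$ over $R$; this is a known consequence of $\agr R$ being Koszul (cf.\ \cite{HI} and the notion of Koszul local ring recalled in \ref{Koszul-defs}), and it is the one place where the Koszul hypothesis is used essentially. I expect no other difficulty: one merely has to keep the directions of the two associated-graded comparisons straight and check the monotonicity of $u\mapsto(1-zu)^{-1}$. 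Note that equality throughout the final chain also forces $\Po^P_\kk(z)=\Po^{\agr P}_\kk(z)$ and $\Po^P_R(z)=\Po^{\agr P}_{\agr R}(z)$ a posteriori; alternatively the Koszulness of $P$ — hence the first of these — could be extracted in advance from \ref{2-linear}, though the argument above does not require it.
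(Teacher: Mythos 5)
Your argument is correct, but it takes a genuinely different route from the paper's. The paper stays inside its $\nu$-map framework: from the Golodness of $\agr\varphi$ and the Koszulness of $\agr R$ it extracts, via \ref{2-linear}, that $P$ is Koszul and $\reg_{\agr P}(\agr R)=1$; the regularity formula \eqref{CM-reg} then gives $\nu^P(\m^n)=0$ for all $n$, and Lemma \ref{golod-criterion} (resting on the Rossi--\c Sega criterion \ref{Golod-lemma}) converts this vanishing into the Golod property of $\varphi$. You instead run a pure Poincar\'e series sandwich: Serre's inequality for $\varphi$, the coefficientwise comparisons $\Po^P_{\kk}\preccurlyeq\Po^{\agr P}_{\kk}$ and $\Po^P_R\preccurlyeq\Po^{\agr P}_{\agr R}$ obtained by lifting graded free resolutions along the associated graded construction, the equality case of Serre's inequality for $\agr\varphi$, and the identity $\Po^R_{\kk}=\Po^{\agr R}_{\kk}$ for Koszul local rings. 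All four ingredients are correct; the first comparison is standard, your identification of $\agr{R}$ as the associated graded of $R$ for the $\n$-adic filtration is right because $\varphi$ is surjective, and the last identity is indeed the linearity-defect-zero statement from \cite{HI}. The only cosmetic point is that the paper defines Golodness of a graded map through the bigraded inequality \eqref{Golod-def-graded}, so you should specialize $t=1$ to obtain the single-variable equality you invoke. As for what each approach buys: the paper's route reuses machinery already set up and produces, as by-products, $\Ker(\varphi)\subseteq\n^2$ and the vanishing of all $\nu^P(\m^n)$, which is the currency of the rest of the paper; your route is independent of the $\nu$-formalism and of \cite{RS}, at the cost of importing two associated-graded comparison facts that the paper never states explicitly.
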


\begin{proof} Assume $\agr\varphi$ is Golod. By \ref{2-linear} we conclude that $\agr P$, thus $P$, is Koszul and $\reg_{\agr P}(\agr R)=1$. In particular, we have  $\Ker(\varphi)\subseteq \n^2$. By \eqref{CM-reg} we conclude  $\nu^P(\m^n)=0$ for all $n$. Since $P$ is Koszul, we also have $\nu^P(\n)=0$ by \ref{Koszul-nu}.  Then  $\varphi$ is Golod by Lemma \ref{golod-criterion}.
\end{proof}

For simplicity, we state and prove the result below for local rings,
and we note that this statement (and its proof) can be translated in
a standard manner to the graded situation.

\begin{proposition}
\label{general-reduction} Let $\varphi\colon (P,\n,\kk)\to
(R,\m,\kk)$ be a surjective homomorphism of local rings with
$\Ker(\varphi)\subseteq \n^2$ and let $u\in \n\smallsetminus \n^2$.
We set $v=\varphi(u)$, $\ov R=R/(v)$ and $\ov P=P/(u)$ and let
$\ov\varphi\colon \ov P\to \ov R$ denote the induced homomorphism.

If $v\m=0$, $u$ is $P$-regular and  $\ov \varphi$ is Golod, then
$\varphi$ is Golod.
\end{proposition}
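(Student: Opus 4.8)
The plan is to reduce the Golod property of $\varphi$ to that of $\ov\varphi$ by exploiting the change-of-rings situation attached to the regular element $u$, and to verify the hypotheses of the criterion in \ref{Golod-lemma} with $a=1$. First I would record the structural consequences of the hypotheses: since $u$ is $P$-regular and $u\notin\n^2$, the ring $\ov P=P/(u)$ is again local with $\Ker(\ov\varphi)\subseteq\ov\n^2$, and $\Po^{\ov P}_{\ov R}(z)$ is related to $\Po^P_R(z)$ in a controlled way because $u$ maps to a regular element on... well, actually $v=\varphi(u)$ need not be $R$-regular, so I must be careful here — the essential point is that $v\m=0$ means $(v)\cong\kk(-1)$-like, i.e. $v$ is a socle element, so $R$ is a trivial fiber extension of $\ov R$ along $v$. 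This is exactly the situation governed by \ref{reduction-prelim} (referenced in the introduction), and I would expect the Poincar\'e series to transform as $\Po^R_\kk(z)=\Po^{\ov R}_\kk(z)/(1-z\cdot\text{(something)})$; more usefully, I would use that $\Tor^P$-modules over $R$ and over $\ov R$ are linked through the Koszul-type complex on $u$.

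The key steps, in order: (1) Use that $u$ is $P$-regular to identify $\Tor^{P}(-,\kk)$ with $\Tor^{\ov P}(-,\kk)$ after an appropriate shift for modules annihilated by $u$, via the standard long exact sequence coming from $0\to P\xrightarrow{u}P\to\ov P\to 0$ tensored with the relevant module; the point is that multiplication by $u$ is zero on any $P$-module that is naturally an $R$-module with $v\m=0$ in the relevant degrees, so these long exact sequences break into short ones. (2) Translate condition (2) of \ref{Golod-lemma} for $\varphi$ — namely that $\Tor_i^P(\m^2,\kk)\to\Tor_i^P(\m,\kk)$ vanishes — into the corresponding statement over $\ov P$ for $\ov\m^2\hookrightarrow\ov\m$, using that $v\m=0$ gives $\m^2=\ov{\m}^2$-worth of elements (more precisely $\m^2\subseteq\m$ and the quotient by $(v)$ identifies $\m/\m^2$ only up to the one-dimensional piece spanned by $v$, while $\m^2$ is unchanged since $v\m=0$). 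Since $\ov\varphi$ is Golod, \ref{Golod-lemma} (or rather its conclusion combined with the standard equivalences) guarantees the $\ov P$ version of both (1) and (2) hold — or, more cleanly, I would use \ref{2-linear} / the regularity reformulation to get $\nu^{\ov P}(\ov\m)=0$ and $\nu^{\ov P}(\ov\m^n)=0$ for all $n$, then transport these back up along the $u$-regular extension. (3) Verify condition (1) of \ref{Golod-lemma} for $\varphi$ by the same factorization argument used in Lemma \ref{golod-criterion}, reducing it to the vanishing of $\nu_{i-1}^P(\n)$, which follows because $P$ being an extension of the Koszul ring $\ov P$ by a regular element keeps it Koszul, so $\nu^P(\n)=0$ by \ref{graded-Koszul-nu}.

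The main obstacle I anticipate is step (2): matching up $\Tor^P(\m^2,\kk)$ with $\Tor^{\ov P}(\ov\m^2,\kk)$ and $\Tor^P(\m,\kk)$ with $\Tor^{\ov P}(\ov\m,\kk)$ requires care because the quotient map $\m\to\ov\m$ has kernel $(v)$ which is \emph{not} contained in $\m^2$, so $\m$ and $\ov\m$ genuinely differ at the level of generators while $\m^2$ and $\ov\m^2$ agree (as $v\m=0$ forces $v\notin\m^2$ and $\m^2\cap(v)=0$). The clean way through is to work with the exact sequences $0\to(v)\to\m\to\ov\m\to 0$ and $0\to\m^2\to\m\to\m/\m^2\to 0$ and chase the induced diagram on $\Tor^P(-,\kk)$, using $v\m=0$ to split off the contribution of $(v)\cong\kk$ and using $u$-regularity of $P$ to pass from $P$-Tor to $\ov P$-Tor. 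Once the diagram is set up, the vanishing of $\nu_i^P(\m)$ for all $i$ drops out from the Golod hypothesis on $\ov\varphi$ together with \ref{2-linear} applied to $\ov\varphi$, and then Lemma \ref{golod-criterion}'s argument finishes (1), so \ref{Golod-lemma} applies and $\varphi$ is Golod.
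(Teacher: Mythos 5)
There is a genuine gap: your strategy of verifying the hypotheses of \ref{Golod-lemma} with $a=1$ cannot succeed under the stated hypotheses, because those conditions are strictly stronger than Golodness of $\varphi$ and do not follow from the assumption that $\ov\varphi$ is Golod. Concretely, you claim in step (2) that the Golod hypothesis on $\ov\varphi$ yields $\nu^{\ov P}(\ov\m)=0$ ``via \ref{2-linear}'', and in step (3) that $\ov P$ is Koszul so that $P$ is Koszul and $\nu^P(\n)=0$. Neither claim is available here: \ref{2-linear} and \ref{main-criterion} are graded statements whose item (1) is the \emph{conjunction} ``$\varphi$ is Golod and $R$ is Koszul''; Golodness alone does not imply vanishing of the $\nu$ maps, and Proposition \ref{general-reduction} is a purely local statement with no Koszul or complete-intersection hypotheses on $P$, $\ov P$, $R$ or $\ov R$. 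For a concrete failure, take $P=\kk[[x,y]]/(x^3)$, $u=y$, $R=P/(xy,y^2)=\kk[[x,y]]/(x^3,xy,y^2)$; then $v\m=0$, $u$ is $P$-regular, and $\ov\varphi$ is the identity of $\kk[[x]]/(x^3)$, hence Golod, so the proposition applies --- yet $P$ is not Koszul and $\nu^P(\n)\ne 0$ (the Yoneda algebra $\Ext_P(\kk,\kk)$ is not generated in degree one), so the factorization argument of Lemma \ref{golod-criterion} that you invoke for condition (1) of \ref{Golod-lemma} breaks down, and there is no reason for $\nu^P(\m)$ to vanish either.

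The paper's proof takes an entirely different, purely numerical route that you should compare with. The hypotheses that $v\m=0$ and $u$ is $P$-regular make the surjections $R\to\ov R$ and $P\to\ov P$ \emph{large} in Levin's sense, which gives the exact factorizations $\Po^R_{\kk}=\Po^{\ov R}_{\kk}\cdot(1+z\Po^R_{\kk})$, $\Po^P_{\kk}=\Po^{\ov P}_{\kk}\cdot(1+z)$ and $\Po^P_{\ov R}=\Po^{\ov P}_{\ov R}\cdot(1+z)$. Substituting the exact Golod formula for $\Po^{\ov R}_{\kk}$ into these produces a closed expression for $\Po^R_{\kk}$ in terms of $\Po^P_{\kk}$ and $\Po^P_{\ov R}$. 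Separately, the short exact sequence $0\to\kk\to R\to\ov R\to 0$ gives the coefficientwise bound $\Po^P_R\preccurlyeq\Po^P_{\ov R}+\Po^P_{\kk}-1-z$, which shows that the right-hand side of Serre's inequality \eqref{Golod-def} is bounded above by that closed expression, i.e.\ by $\Po^R_{\kk}$ itself; since Serre's inequality goes the other way, equality is forced and $\varphi$ is Golod. If you want to salvage your approach you would have to abandon \ref{Golod-lemma} and argue at the level of Poincar\'e series as the paper does, or at least find a criterion for Golodness that is actually equivalent to it in the general local setting.
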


\begin{proof}
Note that $v\in \m\smallsetminus \m^2$. The  hypotheses on $u$ and
$v$ give that the canonical maps $R\to \ov R$ and $P\to \ov P$ are
large, in the sense of Levin, see \cite[Theorem 2.2]{Levin-large}.
In particular, \cite[Theorem 1.1]{Levin-large} gives the
equations below. (Note that we write $\Po^R_M$
instead of $\Po^R_M(z)$, to simplify notation.)
\begin{gather}
\label{a}\Po^R_{\kk}=\Po^{\ov R}_{\kk}\cdot \Po^R_{\ov R}=\Po^{\ov R}_{\kk}\cdot (1+z\Po^R_{\kk})\\
\label{b}\Po^P_{\kk}=\Po^{\ov P}_{\kk}\cdot \Po^P_{\ov P}=\Po^{\ov P}_{\kk}\cdot (1+z)\\
\label{c}\Po^P_{\ov R}=\Po^{\ov P}_{\ov R}\cdot \Po^P_{\ov
P}=\Po^{\ov P}_{\ov R}\cdot (1+z)
\end{gather}
The equality  $\Po^R_{\ov R}=1+z\Po^R_{\kk}$ used in the first
equation comes from the fact that $x\m=0$, which yields a short
exact sequence
\begin{equation}
\label{seq} 0\to \kk\to R \to\ov R\to 0\,.
\end{equation}
In addition, the fact that $\ov \varphi$ is Golod gives:
\begin{equation}
\label{d}\Po^{\ov R}_{\kk}=\frac{\Po^{\ov P}_{\kk}}{1-z(\Po^{\ov
P}_{\ov R}-1)}\,.
\end{equation}
Using the formulas above, we obtain:
\begin{equation}
\label{xx} \Po^R_{\kk}=\frac{\Po^{\ov R}_{\kk}}{1-z\Po^{\ov
R}_{\kk}}=\frac{\Po^{\ov P}_{\kk}}{1-z(\Po^{\ov P}_{\ov
R}-1)-z\Po^{\ov
P}_{\kk}}=\frac{\Po^{P}_{\kk}}{1-z\left((\Po^{P}_{\ov
R}+\Po^{P}_{\kk}-1-z)-1\right)}\,.
\end{equation}
More precisely, we used \eqref{a} for the first equality, \eqref{d}
for the second equality and \eqref{c} and \eqref{b} for the last
equality.

The  short exact sequence \eqref{seq} induces a long exact sequence
\begin{align*}
\dots \to&\Tor_i^P(\kk, \kk)\to \Tor_i^P(R,\kk)\to \Tor_i^P(\ov R,
\kk)\to \dots \to \Tor_2^P(\ov R,\kk)\to\Tor_1^P(\kk,\kk)\to\\ \to
&\Tor_1^P(R,\kk)\to \Tor_1^P(\ov R,\kk)\to \Tor_0^P(\kk,\kk)
\xrightarrow{0}\Tor_0^P(R,\kk)\xrightarrow{\cong}\Tor_0^P(\ov
R,\kk)\to 0\,.
\end{align*}
For each $i$ and each finite $P$-module $M$ we denote  by
$\beta_i(M)$ the number $\rank_{\kk}\Tor_i^P(M,\kk)$. From the
sequence above we can read the following (in)equalities:
\begin{gather*}
\beta_0( R)=\beta_0(\ov R)=\beta_0(\ov R)+\beta_0(\kk)-1  \\
\beta_1(R)\le \beta_1(\ov R)+\beta_1(\kk)-\beta_0(\kk)=\beta_1(\ov R)+\beta_1(\kk)-1\\
\beta_i(R)\le \beta_i(\ov R)+\beta_i(\kk)\quad\text{for all
$i>1$}\,.
\end{gather*}
Putting these inequalities together, we have the coefficientwise
inequality
\begin{equation}
\label{mm} \Po^P_R\preccurlyeq \Po^P_{\ov R}+\Po^P_{\kk}-1-z\,.
\end{equation}
Note that if  $A$ and $B$ are power series with zero constant term, non-negative
coefficients and $A\preccurlyeq B$, then $\frac{1}{1-A} \preccurlyeq
\frac{1}{1-B}$, hence \eqref{mm} yields the coefficientwise
inequality below. The  equality comes from \eqref{xx}.
\begin{equation}
\label{Golode} \frac{\Po^{P}_{\kk}}{1-z(\Po^P_R-1)}\preccurlyeq
\frac{\Po^{P}_{\kk}}{1-z\left((\Po^{P}_{\ov
R}+\Po^{P}_{\kk}-1-z)-1\right)}=\Po_{\kk}^R\,.
\end{equation}
We see that equality must hold in Serre's inequality
\eqref{Golod-def}, hence $\varphi$ is Golod.
\end{proof}

The  result above allows for an important reduction, as described next.

\begin{bfchunk}{ Reduction.}
\label{reduction-prelim}
We say that $R$ is a {\it trivial fiber extension} of a ring $\ov R$ if
there exist elements $v_1, \dots v_n$ with $v_i\in \m\smallsetminus
\m^2$ and $v_i\m=0$ for all $i$,  and  such that $\ov R=R/(v_1, \dots, v_n)$. We say that $R$ is {\it not} a trivial fiber extension if there is no element $v\in \m\smallsetminus \m^2$ with $v\m=0$.

Let $R=Q/I$ with $(Q,\q,\kk)$ a
regular local ring or, in the graded case, a polynomial ring.
Assume that $R$ is a trivial fiber extension of $\ov R=R/(v_1, \dots, v_n)$, where $v_i$ are as above. We may assume that the elements $v_1, \dots, v_n$ are part of a minimal generating set of $\m$.
Let $\boldsymbol w=w_1, \dots, w_n$ denote preimages of $v_1, \dots, v_n$ in $Q$; they are part of a minimal generating set of $\q$ and in particular they form a $Q$-regular sequence. Set
 $$\ov Q=Q/(\boldsymbol w),\quad \ov\q=\q/(\boldsymbol w),\quad\text{and}\quad \ov I=\left(I+(\boldsymbol w)\right)/(\boldsymbol w)\,.$$
 Then  $(\ov Q,\ov\q,\kk)$ is also regular, $\ov I\subseteq \ov{\q}^2$ and $\ov R=\ov Q/\ov I$. Let $\ov f_1,  \dots, \ov f_d$  be a $\ov Q$-regular sequence
contained in $\ov I$, and let $f_1, \dots, f_d$ denote preimages of
these elements in $I$. Set $P=Q/(f_1, \dots f_d)$ and $\ov P=\ov
Q/(\ov f_1, \dots, \ov f_d)$ and for each $i$  let $u_i$ denote the image of $w_i$ in
$P$. Note that $w_1,\dots, w_n, f_1, \dots,f _d$ is a $Q$-regular sequence and in
particular it follows that $u_1, \dots, u_n$ is a $P$-regular sequence. Let
$\ov\varphi\colon \ov P\to \ov R$ and $\varphi\colon P\to R$ denote
the canonical projections. With this notation, we have the following
properties:
\begin{subchunk}
If $\ov \varphi$ is Golod, then $\varphi$ is Golod; this follows
by applying inductively Proposition \ref{general-reduction}. In particular, if $\ov R$ is Golod, then $R$ is Golod.
\end{subchunk}
\begin{subchunk}
The  complete intersections $P$ and $\ov P$ have the same
codimension. In the graded case, $\ov P$ is quadratic if and only
$P$ is quadratic.
\end{subchunk}
\begin{subchunk}
\label{reduction-prelim-Koszul}
When $R$ is graded, $\ov R$ is Koszul if and only if  $R$ is Koszul,
see \cite[Theorem 4]{BF2}.
\end{subchunk}
\end{bfchunk}

Our main technique used in the proof of the Main Theorem is the use
of DG-algebra structures. We now proceed to describe it.  Should the
reader need it, a standard reference for the definition of the
notion of DG-algebra, DG module, divided power variables and related
background is the survey \cite{Avr98}.

\begin{bfchunk}{Semi-free $\Gamma$-extensions and a result of Levin.}
\label{Levin} Let $\mathcal D$ be a DG-algebra. A {\it semi-free
$\Gamma$-extension} of $\mathcal D$ is a DG-algebra $\mathcal
A=\mathcal D\langle X\rangle$ obtained by iterated adjunction of
variables, as described in \,\cite[6.1]{Avr98}. Note that $\mathcal
A$ is free as a graded $\mathcal D$-module (i.e.\! forgetting
differentials). If $\mathcal A$ is a complex of $R$-modules, we say that $\mathcal A$ is {\it minimal} if
$\partial(\mathcal A)\subseteq \m\mathcal A$. It is known that there
exists  a semi-free $\Gamma$-extension $\mathcal T=R\langle
X\rangle$ of $R$  such that $\mathcal T$ is a minimal free
resolution of $\kk$; such a resolution is called a {\it Tate
resolution} of $\kk$.

 We recall below a result of Levin.

\begin{Lemma}
{\rm (\cite[Lemma 2]{Lev})} Let $\mathcal D$ be a DG  $R$-algebra
and $\mathcal A$ a DG $\mathcal D$-module which is free as a graded
$\mathcal D$-module and such that $\mathcal A$ is minimal. Let $M$,
$N$ be $R$-modules such that $\m M\subseteq N\subseteq M$ and the canonical map $ g\colon N\otimes_R\mathcal A\to
M\otimes_R\mathcal A$ is injective.

If the induced homomorphism $\HH(N\otimes_R\mathcal D)\to
\HH(M\otimes_R\mathcal D)$ is zero, so is the induced homomorphism
$\HH(N\otimes_R\mathcal A)\to \HH(M\otimes_R\mathcal A)$.
\end{Lemma}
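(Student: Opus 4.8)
The plan is to filter $\mathcal A$ by sub-DG-$\mathcal D$-modules whose associated graded pieces are finite direct sums of homological shifts of $\mathcal D$, and then to bootstrap the desired vanishing over $\mathcal A$ from the assumed vanishing over $\mathcal D$ by an induction on the filtration level. Throughout I take $\mathcal D$, and hence $\mathcal A$, to be non-negatively graded in homological degree, as is standard for Tate-type resolutions. Write $\iota\colon N\hookrightarrow M$ for the inclusion, so that $g=\iota\otimes_R\mathcal A$ and the hypothesis reads $\HH(\iota\otimes_R\mathcal D)=0$, and put $G=g(N\otimes_R\mathcal A)$. Then $G$ is a subcomplex of $M\otimes_R\mathcal A$ (since $g$ is a chain map), and by injectivity of $g$ it is isomorphic as a complex to $N\otimes_R\mathcal A$; thus $\HH(g)=0$ is equivalent to the assertion that every cycle of $G$ is a boundary of $M\otimes_R\mathcal A$. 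The one place minimality will be used is the inclusion $\partial(M\otimes_R\mathcal A)\subseteq G$: indeed $\partial^{\mathcal A}(\mathcal A)\subseteq\m\mathcal A$ by minimality, so $\partial(m\otimes a)=m\otimes\partial^{\mathcal A}(a)$ is a sum of terms $m\otimes(ra)$ with $r\in\m$, and $m\otimes(ra)=(rm)\otimes a$ lies in $G$ because $rm\in\m M\subseteq N$. Consequently, subtracting boundaries of $M\otimes_R\mathcal A$ preserves membership in $G$.

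For the filtration, choose a homogeneous $\mathcal D$-basis $\{e_\alpha\}$ of $\mathcal A$ and set $\mathcal A^{(j)}=\sum_{\deg e_\alpha\le j}\mathcal D e_\alpha$ for $j\ge 0$, with $\mathcal A^{(-1)}=0$. Non-negativity of the grading forces $\partial^{\mathcal A}(e_\alpha)\in\mathcal A_{\deg e_\alpha-1}\subseteq\mathcal A^{(\deg e_\alpha-1)}$, so the Leibniz rule shows each $\mathcal A^{(j)}$ is a sub-DG-$\mathcal D$-module; moreover the filtration is exhaustive, it agrees with $\mathcal A$ in homological degrees $\le j$ (hence is finite in each homological degree), $\mathcal A^{(j)}/\mathcal A^{(j-1)}$ is graded-free over $\mathcal D$ (so the inclusions $\mathcal A^{(j-1)}\subseteq\mathcal A^{(j)}$ and $\mathcal A^{(j)}\subseteq\mathcal A$ split over $R$), and the induced differential on $\mathcal A^{(j)}/\mathcal A^{(j-1)}$ kills the classes of the degree-$j$ basis elements, making it a finite direct sum of $j$-fold homological shifts of $\mathcal D$ as a DG $\mathcal D$-module. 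Applying $M\otimes_R-$ and $N\otimes_R-$, which preserve split exact sequences, produces filtrations $\{(M\otimes_R\mathcal A)^{(j)}\}$ and $\{(N\otimes_R\mathcal A)^{(j)}\}$ by subcomplexes, compatible with $g$, whose level-$j$ graded pieces are finite direct sums of homological shifts of $M\otimes_R\mathcal D$, respectively $N\otimes_R\mathcal D$, on which $g$ acts as the corresponding direct sum of copies of $\iota\otimes_R\mathcal D$; by hypothesis this induces the zero map in homology. Finally set $G^{(j)}=g\big((N\otimes_R\mathcal A)^{(j)}\big)$; using injectivity of $g$ together with the $R$-splittings one checks $G^{(j)}=G\cap(M\otimes_R\mathcal A)^{(j)}$.

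The core is an induction on $j\ge -1$ establishing $S(j)$: every cycle of $G^{(j)}$ is a boundary of $(M\otimes_R\mathcal A)^{(j)}$. The case $j=-1$ is vacuous, and $j=0$ is immediate because $(M\otimes_R\mathcal A)^{(0)}$ is a finite direct sum of copies of $M\otimes_R\mathcal D$ and a cycle of $G^{(0)}$ corresponds to a tuple of cycles of $N\otimes_R\mathcal D$, each a boundary of $M\otimes_R\mathcal D$ by $\HH(\iota\otimes_R\mathcal D)=0$. For the inductive step, write a cycle $z\in G^{(j)}$ as $z=g(z')$ with $z'$ a cycle of $(N\otimes_R\mathcal A)^{(j)}$. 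Passing to the level-$j$ graded piece, the image of $z$ equals the image under $g$ of a tuple of cycles of $N\otimes_R\mathcal D$, and hence is a boundary by $\HH(\iota\otimes_R\mathcal D)=0$; write it as $\partial\bar w$ and lift $\bar w$ to $w\in(M\otimes_R\mathcal A)^{(j)}$. Then $z-\partial w$ is a cycle lying in $(M\otimes_R\mathcal A)^{(j-1)}$, and it lies in $G$ since $z\in G$ and $\partial w\in\partial(M\otimes_R\mathcal A)\subseteq G$; therefore $z-\partial w$ is a cycle of $G^{(j-1)}$, and $S(j-1)$ gives $z-\partial w=\partial w'$ with $w'\in(M\otimes_R\mathcal A)^{(j-1)}$, whence $z=\partial(w+w')$. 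Since every cycle of $G$ lies in $G^{(j)}$ for $j$ its homological degree, $S(j)$ for every $j$ yields that every cycle of $G$ is a boundary of $M\otimes_R\mathcal A$, i.e.\ $\HH(g)=0$.

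The step I expect to be the main obstacle is arranging matters so that the induction actually closes. The naive inductive statement ``a cycle in filtration level $j$ maps under $g$ to a boundary'' fails, because after subtracting the boundary $\partial w$ that the associated-graded computation provides, one is left with a cycle of $M\otimes_R\mathcal A$ that a priori no longer lies in $g(N\otimes_R\mathcal A)$; the remedy is to carry ``lies in $G$'' as the invariant and to use that $\partial(M\otimes_R\mathcal A)\subseteq G$, which is precisely where minimality of $\mathcal A$ enters. Everything else --- that the $\mathcal A^{(j)}$ are subcomplexes with the stated associated graded, that the filtrations are finite in each homological degree, and the compatibility of $g$ with them --- is a routine consequence of the Leibniz rule and the non-negativity of the grading. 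Equivalently, one may package the argument as a comparison of the two homology spectral sequences of the filtered complexes: the induced morphism vanishes already on the $E^1$-page, hence on $E^\infty$, and, the filtrations being finite in each homological degree, $\HH(g)=0$ follows.
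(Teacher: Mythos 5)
The paper does not prove this statement; it is quoted verbatim from Levin (\cite[Lemma 2]{Lev}), so there is no in-paper argument to compare against. Your proof is correct, and it is essentially Levin's original argument: filter $\mathcal A$ by the homological degree of a homogeneous $\mathcal D$-basis, reduce to the associated graded pieces (shifted copies of $\mathcal D$) where the hypothesis applies, and close the induction by observing that minimality together with $\m M\subseteq N$ gives $\partial(M\otimes_R\mathcal A)\subseteq g(N\otimes_R\mathcal A)$, which is exactly the point you correctly identify as the crux.
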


The context in which this lemma will be applied is as follows: Assume $\mathcal D$ is a semi-free $\Gamma$-extension of $R$ and
$\mathcal A$ be a minimal semi-free $\Gamma$-extension of $\mathcal
D$. Since $\mathcal A$ is a free $R$-module, the map $g$ in the
statement of the lemma is injective.  Then  the hypotheses of the
lemma are satisfied for this choice of $\mathcal D$ and $\mathcal
A$, with $M$ and $N$ such that $\m M\subseteq N\subseteq M$.
\end {bfchunk}

\begin{bfchunk}{Short Tate complexes.}
\label{truncated} Assume that there exists a local or
graded regular ring $(Q,\q,k)$ and an ideal $I$ such that that
$R=Q/I$ with $I\subseteq \q^2$. (This  assumption is satisfied
whenever $R$ is graded or $R$ is complete.)

Let $\mathcal K$ denote the Koszul $R$-complex on a
minimal generating set of $\m$. Let $\mathcal K^Q$ denote a  Koszul complex on a minimal generating set of $\q$ such that $\mathcal K=\mathcal K^Q\otimes_QR$. If $z$ is a cycle in $\mathcal K_1$, we associate to it an element $f\in I$ as follows: Lift $z$ to an element $\tilde z\in \mathcal K_1^Q$ so that $z=\tilde z\otimes 1$, and set $f=\partial(\tilde z)$. In turn,  any element $f\in I$ can be used to construct a cycle $z\in \mathcal K_1$.  While $f$ and $z$ do not determine each other uniquely, this construction yields an isomorphism $H_1(\mathcal K)\cong I/\q I$.

Let $z_1,\dots, z_d$ be a set of cycles in $\mathcal K_1$. For each $i$ we construct $f_i\in I$ corresponding to $z_i$, as described above. Let $\mathcal D$ denote the semi-free $\Gamma$-extension of $\mathcal K$
defined by
\begin{equation}
\label{define-D} \mathcal D=\mathcal K\langle Z_1, \dots, Z_d\mid
\partial(Z_i)=z_i\rangle\,.
\end{equation}
If $[z_1], \dots, [z_d]$  are linearly independent (equivalently, $f_1, \dots, f_d$ is part of a minimal generating set of $I$) and $f_1, \dots, f_d$ is a regular sequence in
$Q$, we say that
$\mathcal D$ is a {\it short Tate complex corresponding to the complete intersection} $P=Q/(f_1, \dots, f_d)$. Conversely, any complete intersection ring  $P=Q/(f_1, \dots, f_d)$ such that the regular sequence $f_1, \dots, f_d$ is part of a minimal generating set of $I$ can be used to
construct a short Tate complex $\mathcal D$ corresponding to $P$.

Assume now that $\mathcal D$ is a short Tate complex corresponding to a complete intersection $P=Q/(f_1,
\dots, f_d)$. One can construct then a Tate
resolution $\mathcal A$ of $\kk$ over $R$ to be a semi-free $\Gamma$-extension of $\mathcal D$. In particular, $\mathcal A$ is a free graded module over $\mathcal D$.

The  construction of a Tate resolution of $\kk$ over the complete
intersection ring $P$  is well understood: One starts with a Koszul
complex $\mathcal K^P$ over $P$, that can be chosen such that
$\mathcal K=\mathcal K^P\otimes_PR$, and adjoins to $\mathcal K^P$
variables in degree $2$ to kill the cycles $z_1, \dots, z_d$ corresponding to the
regular sequence $f_1, \dots, f_d$ . In particular, a minimal Tate
resolution $\mathcal F$ of $\kk$ over $P$ can be chosen such that
$\mathcal D=R\otimes_P\mathcal F$. We see then that the maps
$\nu^P_i(\m)\colon\Tor_i^P({\m}^{2},\kk)\to \Tor_i^P({\m},\kk)$ can
be identified with the canonical maps
\begin{equation}
\label{identify-D} \HH_i(\m^2\otimes_R\mathcal D)\to
\HH_i(\m\otimes_R\mathcal D)
\end{equation}
induced by the inclusion $\m^2\subseteq \m$. Also, since $\mathcal
A$ is a minimal free resolution of $\kk$ over $R$,  the maps
$\nu^R_i(\m)\colon\Tor_i^R({\m}^{2},\kk)\to \Tor_i^R({\m},\kk)$ can
be identified with the maps
\begin{equation}
\label{identify-A} \HH_i(\m^2\otimes_R\mathcal A)\to
\HH_i(\m\otimes_R\mathcal A)
\end{equation}
induced by the  same inclusion.
\begin{subchunk}
\label{levin-help} As noted above, $\mathcal A$ is a free as a
graded $\mathcal D$-module, and thus Levin's result \ref{Levin}
yields that  $\nu^P(\m)=0$ implies $\nu^R(\m)=0$.
\end{subchunk}
\begin{subchunk}
\label{Q} Assume that $R$ is graded.  In this case, note that if
$f_1, \dots, f_d$ is a regular sequence of quadrics, then $f_1,
\dots, f_d$ is also part of a minimal generating set of $I$, for
degree reasons.
\end{subchunk}
\end{bfchunk}

\begin{bfchunk}{Proof of {\rm \ref{nu-ci}}.}
\label{proof-of-nu-ci} Assume $R$ and $P$ are as in \ref{nu-ci},
that is, $R$ is graded, and $P$ is a complete intersection of
quadrics.  Then  $R=Q/I$ with $Q$ and $I$ as in \ref{truncated}.  We
may assume that $P=Q/J$ with $J$ an ideal generated by a regular
sequence of quadrics such that $J\subseteq I$. As noted above, this
regular sequence must be part of a minimal generating set of $I$. We
consider then a short Tate complex $\mathcal D=\mathcal K\langle
Z_1, \dots, Z_d\mid \partial(Z_i)=z_i\rangle$ corresponding to $P$. Using \ref{levin-help}, we conclude that
$\nu^P(\m)=0$ implies $\nu^R(\m)=0$. \hfill \qed
\end{bfchunk}

\begin{bfchunk}{Notation.}
\label{more-notation} If $\mathcal F$ is a complex of $R$-modules we
denote by $\nu(\mathcal F)$ the homomorphism of complexes
\begin{equation}
\label{nu-complex} \nu(\mathcal F)\colon \HH(\m \mathcal F)\to
\HH(\mathcal F)
\end{equation}
induced by the inclusion $\m\mathcal F\subseteq \mathcal F$. When
$\mathcal F$ is a free resolution of $\kk$, we identify
$\nu(\m\mathcal F)$ with the map
$$
\HH(\m^2\otimes_R\mathcal F)\to \HH(\m\otimes_R\mathcal F)
$$
induced by the inclusion $\m^2\subseteq \m$.  In particular, if
$\mathcal D$ and $\mathcal A$ are as in \ref{truncated}, with
$\mathcal D$ a short Tate complex, and $P$ is the corresponding
complete intersection ring, then our conventions yield for each $i$
the identifications
\begin{equation}\nu_i^P(\m)=\nu_i(\m \mathcal D)\quad \text{and}\quad  \nu_i^R(\m)=\nu_i(\m \mathcal A)\,.
\end{equation}
\end{bfchunk}

In view of the discussion in \ref{truncated}, \ref{Q} and the
notation in \ref{more-notation}, we can reformulate Proposition
\ref{main-criterion} in terms of short Tate complexes as follows.

\begin{corollary}
\label{tool} Assume that $R$ is graded and quadratic and write
$R=Q/I$ with $Q$ and $I$ as in {\rm \ref{truncated}}. Let
$\mathcal D$ be a short Tate complex corresponding to a quadratic complete intersection $P=Q/(f_1, \dots, f_d)$.

The  following are  equivalent:
\begin{enumerate}[\quad\rm(1)]
\item The  induced homomorphism $\varphi\colon P\to R$ is Golod and $R$ is Koszul;
\item $\nu(\m\mathcal D)=0$.
\end{enumerate}
If these assumptions hold, then $R$ is absolutely Koszul. \qed
\end{corollary}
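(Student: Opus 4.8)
The plan is to read this corollary off Proposition~\ref{main-criterion}, after matching its hypotheses and conclusions with the short‑Tate‑complex language set up in \ref{truncated} and \ref{more-notation}. The first step is to verify that the induced homomorphism $\varphi\colon P\to R$ satisfies $\Ker(\varphi)\subseteq\n^2$. Since $\mathcal D$ is a short Tate complex corresponding to the quadratic complete intersection $P=Q/(f_1,\dots,f_d)$, by definition $f_1,\dots,f_d$ is a $Q$‑regular sequence of quadrics contained in $I$; hence $P$ is a complete intersection of quadrics and $\varphi$ is the surjection induced by $Q\twoheadrightarrow R$, with $\Ker(\varphi)=I/(f_1,\dots,f_d)$. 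Because $I\subseteq\q^2$ (as required in \ref{truncated}), this gives $\Ker(\varphi)\subseteq\bigl(\q^2+(f_1,\dots,f_d)\bigr)/(f_1,\dots,f_d)=\n^2$. So all the hypotheses of Proposition~\ref{main-criterion} are in force.

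The second step is to apply Proposition~\ref{main-criterion}: it yields that statement~(1) of the corollary ($\varphi$ is Golod and $R$ is Koszul) is equivalent to $\nu^P(\m)=0$, and that when these hold $R$ is absolutely Koszul. It then remains to identify $\nu^P(\m)=0$ with $\nu(\m\mathcal D)=0$. By the construction recalled at the end of \ref{truncated}, a minimal Tate resolution $\mathcal F$ of $\kk$ over $P$ can be chosen with $\mathcal D=R\otimes_P\mathcal F$; hence for every $i$ the map $\nu_i^P(\m)\colon\Tor_i^P(\m^2,\kk)\to\Tor_i^P(\m,\kk)$ is identified, via the conventions of \ref{truncated} and \ref{more-notation}, with $\HH_i(\m^2\otimes_R\mathcal D)\to\HH_i(\m\otimes_R\mathcal D)$, that is, with $\nu_i(\m\mathcal D)$. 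Therefore $\nu^P(\m)=0$ if and only if $\nu(\m\mathcal D)=0$, and the two displayed statements of the corollary are equivalent, with the ``absolutely Koszul'' conclusion inherited directly from Proposition~\ref{main-criterion}.

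Since every ingredient has already been assembled, there is no substantive obstacle here; this corollary is essentially a repackaging of Proposition~\ref{main-criterion} in the notation that the later sections will actually use. The only point requiring a little care is the bookkeeping: one must make sure that the isomorphisms $\Tor_i^P(\m,\kk)\cong\HH_i(\m\otimes_R\mathcal D)$ and $\Tor_i^P(\m^2,\kk)\cong\HH_i(\m^2\otimes_R\mathcal D)$ invoked are precisely the ones fixed in \ref{truncated}, so that $\nu^P(\m)$ and $\nu(\m\mathcal D)$ denote literally the same maps of homology, not merely maps of the same rank.
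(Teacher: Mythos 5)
Your argument is correct and is exactly the derivation the paper intends: the corollary is stated as an immediate reformulation of Proposition~\ref{main-criterion} via the identifications $\nu_i^P(\m)=\nu_i(\m\mathcal D)$ fixed in \ref{truncated} and \ref{more-notation}, together with the observation that $\Ker(\varphi)\subseteq\n^2$ because $I\subseteq\q^2$. Your extra care in checking that the identification is of the maps themselves (not just their ranks) is a sensible point, already built into the conventions of \ref{truncated}.
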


\begin{remark} \label{Golod-ring} Taking $P=Q$ in the corollary above, we see that the following are equivalent for a standard graded quadratic $\kk$-algebra $R$:
\begin{enumerate}
\item $R$ is a Koszul Golod ring;
\item $\nu(\m\mathcal K)=0$.
\end{enumerate}
\end{remark}

\section{Key homological techniques}
\label{key}
The  results in this section identify some of our main techniques in
proving $\nu(\mathcal \m \mathcal D)=0$, under adequate conditions on
a short Tate complex $\mathcal D$. We record only the information on the
complex $\mathcal D$ and we leave for later the discussion of the
conclusions that can be formulated in terms of Golod homomorphisms,
in view of Corollary \ref{tool}.

Throughout this section, the ring $R$ is assumed to be as in \ref{R}
(either local or graded), and $x_1, \dots, x_e$ denotes a minimal
generating set of $\m$. We fix  integers $s, t,u\in\{1,\dots, e\}$, not necessarily distinct.
In addition, we use the notation introduced below for Koszul
complexes and for reduction modulo an element.

\begin{bfchunk}{Notation for Koszul complexes.}
\label{notation-Koszul} Let $\mathcal K=R\langle X_1, \dots,
X_e\rangle$ with $\partial(X_i)=x_i$ for all $i$ denote the Koszul
complex on $x_1, \dots, x_e$. In general, if $a\in\m$, we use the
corresponding capital leter  $A$ to identify an element $A\in
\mathcal K_1$ such that $\partial(A)=a$.

 If $X$ is a subset of  $\{X_1, \dots, X_e\}$ we denote by $\mathcal K^X$ the DG-algebra $$\mathcal K^X= R\langle X_i\mid i\in \{1,\dots,e\}, X_i\notin X\}\,.$$
We regard $\mathcal K^X$ as a subalgebra of $\mathcal K$. If
$X=\{X_i\mid i\in I\}$ then we may also write $\mathcal K^I$ instead
of $\mathcal K^X$. If $I=\{i\}$ or $I=\{i,j\}$, we also write
$\mathcal K^I=\mathcal K^i$, respectively $\mathcal K^I={\mathcal
K}^{i,j}$.

As explained in Section 1, we adjoin DG $\Gamma$-variables of degree $2$ to $\mathcal K$ to create a short Tate complex. In order to easily  distinguish beteween variables of degree $1$ and variables of degree $2$, we denote the variables of degree $2$ using capital greek letters such as $\Lambda$ and $\Upsilon$.
\end{bfchunk}

\begin{bfchunk}{Reduction modulo $x_i$.}
\label{reduction-x} Let $x\in \{x_1, \dots, x_e\}$; the choice of
the element $x$ will be made clear in each instance when the
notation introduced here is used. We set $\ov R=R/(x)$. If $\mathcal
A$ is a complex of $R$-modules, we set $\overline{\mathcal
A}=\mathcal A/x\mathcal A$. If $a\in R$ or $A\in \mathcal A$, we
denote by $\ov a$, respectively $\ov A$ the corresponding class
modulo $(x)$, respectively $x\mathcal A$. If $\a$ is an ideal of
$R$,  then $\ov\a$ denotes the ideal $\a+(x)/(x)$ of $\ov R$.   In
particular, if $x=x_i$, note that $\ov{\mathcal K^i}$ is the Kozul
complex on the minimal generating set of $\ov\m$ consisting of all
$\ov x_j$ with $j\ne i$.

Let $\a\subseteq \m$ be an ideal. If $\mathcal A$ is  a minimal
semi-free $\Gamma$-extension of $R$, then the following equivalence
holds:
\begin{equation}
\label{implication}\nu(\ov\a\ov{\mathcal A})=0\,\iff\,Z(\m\a\mathcal
A)\subseteq B(\a\mathcal A)+x\m\mathcal A\,.
\end{equation}
Indeed, the equality $\nu(\ov\a\ov{\mathcal A})=0$ is equivalent to
the inclusion $Z(\m\a\mathcal A)\subseteq B(\a\mathcal A)+x\mathcal
A$. Since $Z(\m\a\mathcal A)$ and  $B(\a\mathcal A)$ are both contained
in $\m^2\mathcal A$, this inclusion is equivalent to  the inclusion $Z(\m\a\mathcal A)\subseteq B(\a\mathcal A)+ x\mathcal
A\cap \m^2\mathcal A$. If $A$ is a homogeneous
element in $\mathcal A_j$ for some $j$ and $xA\in \m^2\mathcal A_j$,
note that we must have $A\in \m\mathcal A_j$, since $\mathcal A_j$
is a free $R$-module and $x\in \m\smallsetminus \m^2$. We have then
$x\mathcal A\cap \m^2\mathcal A=x\m\mathcal A$, establishing thus
\eqref{implication}.
\end{bfchunk}

\begin{proposition}
\label{reduction-proposition} Assume $x_s\m^2=0$. Let  $\mathcal
E=\mathcal K\langle \Upsilon\rangle$  be a minimal semi-free
$\Gamma$-extension of $\mathcal K$ and  $\mathcal A=\mathcal
K^s\langle {\Lambda}\rangle$  be a minimal DG $\Gamma$-extension of
$\mathcal K^s$, where $\Upsilon$ and $\Lambda$ are sets of DG $\Gamma$-variables
of homological degree $2$. Set $\mathcal D=\mathcal K\langle \Upsilon\cup
\Lambda\rangle$.

Adopt the notation in {\rm \ref{reduction-x}} with $x=x_s$. If
$\nu(x_s\mathcal E)=0$ and $\nu(\overline{\m}\overline{\mathcal
A})=0$, then $\nu(\m \mathcal D)=0$.
\end{proposition}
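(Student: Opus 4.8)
The statement $\nu(\m\mathcal D)=0$ is the inclusion $Z(\m^{2}\mathcal D)\subseteq B(\m\mathcal D)$ of cycles into boundaries, and this is what I would prove. Note first that $\mathcal D=R\langle X_{1},\dots ,X_{e},\Upsilon ,\Lambda\rangle$ is a \emph{minimal} semi-free $\Gamma$-extension of $R$: minimality holds because $\partial(\Upsilon_{i})\in\m\mathcal K$ (as $\mathcal E$ is minimal) and $\partial(\Lambda_{j})\in\m\mathcal K^{s}$ (as $\mathcal A$ is minimal). Moreover $\mathcal E$ is a sub-DG-algebra of $\mathcal D$, and $\mathcal D$ is free as a graded $\mathcal E$-module on the divided-power monomials in the variables $\Lambda$. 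My plan is to split the task into (A) showing $Z(\m^{2}\mathcal D)\subseteq B(\m\mathcal D)+x_{s}\m\mathcal D$, and (B) showing that every cycle lying in $x_{s}\m\mathcal D$ is a boundary in $\m\mathcal D$. The two combine immediately: given $z\in Z(\m^{2}\mathcal D)$, write $z=\partial(v)+w$ with $v\in\m\mathcal D$ and $w\in x_{s}\m\mathcal D$ by (A); then $w=z-\partial(v)$ is a cycle, so (B) gives $w\in B(\m\mathcal D)$ and hence $z\in B(\m\mathcal D)$.

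For step (A), the equivalence \eqref{implication} (applied with the minimal semi-free $\Gamma$-extension $\mathcal D$ in place of ``$\mathcal A$'', with $\a=\m$ and $x=x_{s}$) reduces the claim to $\nu(\ov\m\,\ov{\mathcal D})=0$, where $\ov{\mathcal D}=\mathcal D/x_{s}\mathcal D$ is regarded over $\ov R=R/(x_{s})$. Now $\ov{\mathcal D}=\ov{\mathcal A}\langle\ov{X_{s}},\ov\Upsilon\rangle$, with $\partial(\ov{X_{s}})=\ov{x_{s}}=0$ and $\partial(\ov\Upsilon_{i})=\ov{\partial(\Upsilon_{i})}\in\ov\m\,\ov{\mathcal K}$; thus $\ov{\mathcal D}$ is a minimal semi-free $\Gamma$-extension of the DG $\ov R$-algebra $\ov{\mathcal A}$, and is free as a graded $\ov R$-module. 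I would then apply Levin's result \ref{Levin} with $\ov{\mathcal A}$ in the role of the base algebra, $\ov{\mathcal D}$ in the role of the free DG module over it, and with $M=\ov\m$, $N=\ov\m^{2}$: the hypothesis $\nu(\ov\m\,\ov{\mathcal A})=0$ is precisely the vanishing of $\HH(\ov\m^{2}\otimes_{\ov R}\ov{\mathcal A})\to\HH(\ov\m\otimes_{\ov R}\ov{\mathcal A})$, and Levin's lemma then yields the vanishing of $\HH(\ov\m^{2}\otimes_{\ov R}\ov{\mathcal D})\to\HH(\ov\m\otimes_{\ov R}\ov{\mathcal D})$, i.e.\ $\nu(\ov\m\,\ov{\mathcal D})=0$.

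For step (B), let $w\in x_{s}\m\mathcal D$ be a cycle and expand $w=\sum_{\lambda}w_{\lambda}\lambda$ over the divided-power monomials $\lambda$ in $\Lambda$, so that $w_{\lambda}\in x_{s}\m\mathcal E$. Since each $\partial(\Lambda_{j})\in\m\mathcal K^{s}\subseteq\m\mathcal E$ lowers the $\Lambda$-degree by one, comparing top-$\Lambda$-degree components in $\partial(w)=0$ shows that every top-degree coefficient $w_{\mu}$ is a cycle of $\mathcal E$ lying in $x_{s}\m\mathcal E=\m(x_{s}\mathcal E)$. By the hypothesis $\nu(x_{s}\mathcal E)=0$ we may write $w_{\mu}=\partial(h_{\mu})$ with $h_{\mu}\in x_{s}\mathcal E\subseteq\m\mathcal D$; subtracting from $w$ the boundary of $\sum_{\mu}h_{\mu}\mu$ (the sum over the top-degree monomials) cancels the top-degree part, and, because $x_{s}\mathcal E\cdot\m\mathcal K^{s}\subseteq x_{s}\m\mathcal E$, the correction terms arising from $h_{\mu}\partial(\mu)$ again lie in $x_{s}\m\mathcal D$. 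Descending on the $\Lambda$-degree, I reduce $w$ modulo $B(\m\mathcal D)$ to a cycle of $\mathcal E$ lying in $x_{s}\m\mathcal E$, which by $\nu(x_{s}\mathcal E)=0$ once more is a boundary in $x_{s}\mathcal E\subseteq\m\mathcal D$. Hence $w\in B(\m\mathcal D)$.

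The delicate point is step (B): one must carry along the signs coming from the divided-power differential $\partial(\Lambda_{j}^{(a)})=\partial(\Lambda_{j})\Lambda_{j}^{(a-1)}$ and check at each stage of the induction that the remainder stays inside $x_{s}\m\mathcal D$, not merely inside $\m^{2}\mathcal D$; this is where the standing hypothesis $x_{s}\m^{2}=0$, together with $x_{s}\notin\m^{2}$, is used. Step (A) is largely formal once one observes that $\ov{\mathcal D}$ is obtained from $\ov{\mathcal A}$ by minimal adjunction of the variables $\ov{X_{s}},\ov\Upsilon$, so that Levin's lemma applies verbatim over $\ov R$.
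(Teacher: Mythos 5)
Your proof is correct and follows essentially the same route as the paper: reduce modulo $x_s$ via \eqref{implication} and Levin's lemma applied to $\ov{\mathcal A}\subseteq\ov{\mathcal D}$, then absorb the leftover $x_s\m\mathcal D$ using $\nu(x_s\mathcal E)=0$ and $x_s\m^2=0$. The only difference is in your step (B): the paper simply applies Levin's lemma a second time (with $M=(x_s)$, $N=x_s\m$, using that $\mathcal D$ is free as a graded $\mathcal E$-module) to get $\nu(x_s\mathcal D)=0$, whereas you reprove that special case by a direct descending induction on the $\Lambda$-degree -- which works, up to the cosmetic point that the free basis of $\mathcal D$ over $\mathcal E$ is given by monomials in $\Lambda\smallsetminus\Upsilon$ rather than in $\Lambda$.
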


Note that, since $\Upsilon$ and $\Lambda$ in this statement are sets of variables
in the same degree, the order in which we adjoin these variables does
not matter.

\begin{proof}
Set $\Upsilon'=\Upsilon\smallsetminus \Lambda$ and $\Lambda'=\Lambda\smallsetminus \Upsilon$ .  The complex  $\mathcal
D$  can be identified with ${\mathcal E}\langle {\Lambda'}\rangle$ and
also with $\mathcal A\langle X_s, \Upsilon'\rangle$. Since  $\mathcal D$ is
free when considered as a graded module over $\mathcal E$ and
$\mathcal D_i$ is a free $R$-module for each $i$, we  use  Levin's
result \ref{Levin} and the hypothesis that $\nu(x_s\mathcal E)=0$ to
see that $\nu(x_s\mathcal D)=0$.

Note that $\overline{\mathcal D}$ is also free as a graded module over
$\overline{\mathcal A}$.  Using again \ref{Levin} and the hypothesis
$\nu(\overline{\m}\overline{\mathcal A})=0$, we obtain
$\nu(\overline{\m}\overline{\mathcal D})=0$. We use then
\eqref{implication} and $x_s\m^2=0$ to deduce:
$$
Z(\m^2\mathcal D)\subseteq B(\m\mathcal D)+x_s\m\mathcal
D=B(\m\mathcal D)+Z(x_s\m\mathcal D)\,.
$$
Since $\nu(x_s\mathcal D)=0$, we see then
$Z(\m^2\mathcal D)\subseteq B(\m\mathcal D)$, hence $\nu(\m\mathcal D)=0$.
\end{proof}

\begin{proposition}
\label{1} Let $\b\subseteq \m$  be an ideal  such that
$\m\b=x_t\b=x_s\b$ and $x_tx_s\in \ann(\b)\m$. Set $\mathcal
E={\mathcal K}\langle \Upsilon\mid \partial (\Upsilon)=x_tX_s-K\rangle$, where
$K\in \ann(\b)\mathcal K_1$ is such that $\partial(x_tX_s-K)=0$.

 Then    $\m^2\b=0$ and $\nu(\b \mathcal E)=0$.
\end{proposition}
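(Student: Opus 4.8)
The plan is to verify the two conclusions separately, starting with the easy one. For $\m^2\b = 0$: from the hypotheses $\m\b = x_s\b$ and $x_tx_s \in \ann(\b)\m$, I would compute $\m^2\b = \m(\m\b) = \m(x_s\b)$. Iterating the relation $\m\b = x_t\b$ once more on the inner factor, one gets $\m(x_s\b) = x_t(x_s\b) \subseteq (x_tx_s)\b \subseteq \ann(\b)\m\b = \m\ann(\b)\b = 0$. (I should double-check which of $x_s, x_t$ plays which role, but the symmetry in $\m\b = x_t\b = x_s\b$ makes this harmless.) This also makes the definition of $\mathcal{E}$ legitimate: since $K \in \ann(\b)\mathcal{K}_1$ and $\partial(x_tX_s - K) = x_tx_s - \partial(K)$, the element $x_tX_s - K$ is required to be a cycle, which is the stated condition on the choice of $K$; such a $K$ exists because $x_tx_s \in \ann(\b)\m$, so $x_tx_s$ lies in the image of the boundary map restricted to $\ann(\b)\mathcal{K}_1$.

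For $\nu(\b\mathcal{E}) = 0$: I need to show $Z(\m\b\mathcal{E}) \subseteq B(\b\mathcal{E})$, i.e. every cycle in $\m\b\mathcal{E}$ is a boundary of an element of $\b\mathcal{E}$. Since $\m\b = x_s\b$, we have $\m\b\mathcal{E} = x_s\b\mathcal{E}$, so a general element of $\m\b\mathcal{E}$ has the form $x_s w$ with $w \in \b\mathcal{E}$; more usefully, write $\mathcal{E} = \mathcal{K}\langle \Upsilon \rangle$ with $\Upsilon$ a single variable in degree $2$, so every element of $\b\mathcal{E}$ decomposes along the $\Gamma$-basis $\{1, \Upsilon, \Upsilon^{(2)}, \dots\}$ of $\mathcal{E}$ over $\mathcal{K}$ with coefficients in $\b\mathcal{K}$. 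The key computation is $\partial(\Upsilon) = x_tX_s - K$, so $\partial(b\Upsilon) = b(x_tX_s - K) = (bx_t)X_s - bK$ for $b \in \b$; since $bK = 0$ (as $K \in \ann(\b)\mathcal{K}_1$) and $bx_t \in x_t\b = \m\b$, this gives $\partial(b\Upsilon) = (bx_t)X_s$, realizing elements of $x_t\b \cdot X_s = \m\b X_s$ as boundaries. The strategy is then: take a cycle $\zeta \in Z(\m\b\mathcal{E})$, write it out against the $\Gamma$-basis in $\Upsilon$, and peel off the top $\Upsilon$-degree term using the observation above, reducing to a cycle supported in lower $\Upsilon$-degree (ultimately in $\m\b\mathcal{K}$), and handle the base case by using that $Z(\m\b\mathcal{K}) \subseteq B(\b\mathcal{K})$ — which should follow because $\m^2\b = 0$ forces $\b\mathcal{K}_{\geq 1}$ (or the relevant truncation) to contribute, together with the Koszul differential, enough boundaries; alternatively the base case may be immediate because a cycle in $\m\b\mathcal{K}$ already lies in $B(\b\mathcal{K})$ by the acyclicity of $\mathcal{K}$ in positive degree combined with $\m\b\mathcal{K} \subseteq \m^2\mathcal{K}$ having trivial intersection with the relevant homology.

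The main obstacle I anticipate is the bookkeeping in the induction on $\Upsilon$-degree: when I subtract $\partial(b\Upsilon^{(j)})$ to cancel the top-degree term of $\zeta$, I must check that the coefficient $b$ can indeed be chosen in $\b$ (not just in $\m$), that the correction term $bK$ really vanishes against all the relevant coefficients, and that after the subtraction the new element is still a cycle lying in $\m\b\mathcal{E}$ of strictly smaller $\Upsilon$-degree — this requires carefully tracking how $\partial$ acts across the $\mathcal{K}\langle\Upsilon\rangle$ bigrading and using $x_tx_s \in \ann(\b)\m$ to kill cross-terms. The relations $\m\b = x_t\b = x_s\b$ are exactly what make the $X_s$-components and the $\m$-multiples interchangeable, so the proof should go through, but verifying that no $\b$-torsion obstruction survives in the lowest $\Upsilon$-degree is the delicate point; I would expect the authors to invoke $\m^2\b = 0$ there to collapse the Koszul part.
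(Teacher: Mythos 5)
Your computation of $\m^2\b=0$ and your justification for the existence of $K$ are correct and match the paper, and your key identity $\partial(b\Upsilon)=(bx_t)X_s$ for $b\in\b$ is exactly the engine of the paper's argument. The problem is the induction you propose for $\nu(\b\mathcal E)=0$: it runs in the wrong direction and its base case is unjustified. Since $bx_tX_s\Upsilon^{(m)}=\partial(b\Upsilon^{(m+1)})$, an element of $\Upsilon$-degree $m$ is realized as a boundary of an element of $\Upsilon$-degree $m+1$; subtracting such boundaries does not strip off the top $\Upsilon$-degree term of a cycle and reduce you to lower $\Upsilon$-degree. Moreover, the base case you fall back on, $Z(\m\b\mathcal K)\subseteq B(\b\mathcal K)$, is not true in general: $\mathcal K$ is the Koszul complex of $R$, which is not acyclic in positive degrees, and the boundaries needed to kill an element such as $bx_tX_s\in\m\b\mathcal K_1$ genuinely involve the adjoined variable $\Upsilon$, so they do not lie in $B(\b\mathcal K)$. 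Neither "$\m^2\b=0$ collapses the Koszul part" nor "acyclicity" rescues this step.

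The paper avoids these issues by proving the stronger statement that \emph{every element} of $\m\b\mathcal E$ (not just every cycle) lies in $B(\b\mathcal E)$, and by inducting on the homological degree of the complementary factor rather than on $\Upsilon$-degree. Concretely, the two identities $\partial(b\Upsilon^{(m+1)})=bx_tX_s\Upsilon^{(m)}$ and $\partial(bX_s\Upsilon^{(m)})=bx_s\Upsilon^{(m)}$, combined with $\m\b=x_s\b=x_t\b$, give $\m\b X_s^i\Upsilon^{(m)}\subseteq\partial\bigl(\b X_s^{i'}\Upsilon^{(m')}\bigr)$ for suitable $i'\in\{0,1\}$ and $m'\in\{m,m+1\}$; the Leibniz rule and the minimality of $\mathcal E$ then yield $\m\b X_s^i\Upsilon^{(m)}\mathcal E_{\leqslant n}\subseteq B(\b\mathcal E)+\m\b X_s^{i'}\Upsilon^{(m')}\mathcal E_{\leqslant n-1}$, and descending induction on $n$ with the trivial base case $n=-1$ finishes the proof. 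Working with arbitrary elements rather than cycles is what allows the induction to close without ever needing a statement like $Z(\m\b\mathcal K)\subseteq B(\b\mathcal K)$; your cycle-by-cycle peeling strategy, as written, cannot be completed.
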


\begin{proof} First, note that there exists $K\in \ann(\b)\mathcal K_1$ such that $\partial(x_tX_s-K)=0$. Indeed, we write $x_tx_s=\sum_{i=1}^e a_ix_i$ with $a_i\in \ann(\b)$ and we can take $K=\sum_{i=1}^e a_iX_i$.

 Let $K\in \ann(\b)\mathcal K_1$ be such that $\partial(x_tX_s-K)=0$.  We have $$\m^2\b=\m(\m\b)=\m(x_s\b)=x_s(\m\b)=x_s(x_t\b)=(x_sx_t)\b=0\,.$$

Let $b\in \b$. Since $b K=0$ and $X_s^2=0$, we have then for every
$m\ge 0$:
\begin{gather*}
\partial(b\Upsilon^{(m+1)})=b(x_tX_s-K)\Upsilon^{(m)}=bx_tX_s\Upsilon^{(m)}\,;\\
\partial(bX_s\Upsilon^{(m)})=-bX_s(x_tX_s-K)\Upsilon^{(m-1)}+bx_s\Upsilon^{(m)}=bx_s\Upsilon^{(m)}\,.
\end{gather*}
 Let $i\in \{0,1\}$ and $m\ge 0$.  Since $\m\b=x_s\b=x_t\b$, we see from the above that there exist then $i'\in \{0,1\}$, and $m'\in \{m, m+1\}$  such that
\begin{equation}
\label{basis-induction} \m\b X_s^i\Upsilon^{(m)}\subseteq\partial(\b
X_s^{i'}\Upsilon^{(m')})\,.
\end{equation}

For every $i\in \{0,1\}$ and $m\ge 0$, $n\ge 0$ we  have then
\begin{align*}
\m\b X_s^i\Upsilon^{(m)}\mathcal E_{\leqslant n}& \subseteq B(\b \mathcal E)+\b  X_s^{i'}\Upsilon^{(m')}\partial( \mathcal E_{\leqslant n})\\
&\subseteq  B(\b \mathcal E)+ \m \b X_s^{i'}\Upsilon^{(m')} \mathcal
E_{\leqslant n-1}
\end{align*}
where in the first line we used the Leibnitz rule and
\eqref{basis-induction} and  in the second line we used the fact
that $\mathcal E$ is minimal. Arguing by induction on $n$ and noting
that the base case $n=-1$ holds trivially, we see
$$
\m\b X_s^i\Upsilon^{(m)}\mathcal E_{ \leqslant  n}\subseteq B(\b \mathcal E)
$$
for all $n$, $i$ and $m$ as above. We conclude $\m\b\mathcal
E\subseteq B(\b \mathcal E)$, hence $\nu(\b\mathcal E)=0$.
\end{proof}

\begin{corollary}
\label{use-Golod} Assume $R$ is a standard graded quadratic algebra
and  the following hold: $x_u\m=(x_ux_s)=(x_ux_t)$, $x_sx_t\in
\ann(x_u)\m$. Set $\mathcal D=\mathcal K\langle \Upsilon\mid
\partial(\Upsilon)=x_tX_s-K\rangle$, where $K\in \ann(x_u)\mathcal K_1$ is
such that $\partial(x_tX_s-K)=0$.

If the ring $R/(x_u)$ is Golod and Koszul, then $\nu(\m\mathcal
D)=0$.
\end{corollary}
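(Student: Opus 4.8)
The plan is to deduce the conclusion by chaining Proposition \ref{1} with Proposition \ref{reduction-proposition}, using Remark \ref{Golod-ring} applied to $R/(x_u)$ to convert the Golod--Koszul hypothesis into the vanishing of a $\nu$-map on a Koszul complex. The three hypotheses of the corollary turn out to be exactly the data needed to make this chaining go through.

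First I would apply Proposition \ref{1} with the ideal $\b=x_uR$. With this choice the hypotheses $x_u\m=(x_ux_s)=(x_ux_t)$ and $x_sx_t\in\ann(x_u)\m$ read $\m\b=x_s\b=x_t\b$ and $x_sx_t\in\ann(\b)\m$, and the element $K$ fixed in the statement lies in $\ann(\b)\mathcal K_1$ with $\partial(x_tX_s-K)=0$; moreover the DG-algebra $\mathcal D$ of the corollary is literally the DG-algebra denoted $\mathcal E$ in Proposition \ref{1} (and it is minimal, since $x_tX_s-K\in\m\mathcal K$). Hence that proposition yields $x_u\m^2=0$ and $\nu(x_u\mathcal D)=0$.

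Next I would record that $\overline R:=R/(x_u)$ is again a standard graded quadratic $\kk$-algebra: writing $R=Q/I$ with $I$ generated by quadrics and lifting $x_u$ to a linear form $w\in Q$ that is part of a minimal generating set of $\q$, the ring $\overline R$ is presented by the polynomial ring $Q/(w)$ modulo the ideal generated by the images of the quadric generators of $I$. Since $\overline R$ is Golod and Koszul by hypothesis, Remark \ref{Golod-ring} gives $\nu(\overline\m\,\overline{\mathcal K^u})=0$, where, by the description in \ref{reduction-x} taken with $x=x_u$, the complex $\overline{\mathcal K^u}=\mathcal K^u/x_u\mathcal K^u$ is precisely the Koszul complex of $\overline R$ on the minimal generating set $\{\overline x_j\mid j\ne u\}$ of $\overline\m$.

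Finally I would apply Proposition \ref{reduction-proposition} with reduction element $x=x_u$, with $\mathcal E$ and the output complex $\mathcal D$ of that statement both equal to our $\mathcal D$, and with $\mathcal A=\mathcal K^u$ (that is, the set $\Lambda$ there is empty and the set $\Upsilon$ there is the singleton consisting of our variable $\Upsilon$; since both sets live in homological degree two, their order and the emptiness of one of them cause no difficulty). Given the standing condition $x_u\m^2=0$ from the first step, the two remaining hypotheses of that proposition, namely $\nu(x_u\mathcal E)=\nu(x_u\mathcal D)=0$ and $\nu(\overline\m\,\overline{\mathcal A})=\nu(\overline\m\,\overline{\mathcal K^u})=0$, are exactly the conclusions of the first and second steps, so Proposition \ref{reduction-proposition} delivers $\nu(\m\mathcal D)=0$. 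I do not expect a genuine obstacle: the real content is already packaged in Propositions \ref{1} and \ref{reduction-proposition} and in Remark \ref{Golod-ring}, and the only points needing a line of care are checking that the degenerate choice of an empty variable set is legitimate in Proposition \ref{reduction-proposition} and that quadraticity is preserved on passing from $R$ to $R/(x_u)$, which is what makes Remark \ref{Golod-ring} applicable to $\overline R$.
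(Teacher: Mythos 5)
Your proof is correct and follows essentially the same route as the paper: Proposition \ref{1} with $\b=(x_u)$ to get $x_u\m^2=0$ and $\nu(x_u\mathcal D)=0$, Remark \ref{Golod-ring} applied to $\ov R=R/(x_u)$ to get $\nu(\ov\m\,\ov{\mathcal K^u})=0$, and then Proposition \ref{reduction-proposition} with $\mathcal E=\mathcal D$ and $\mathcal A=\mathcal K^u$. Your extra remarks (that $x_u\m^2=0$ supplies the standing hypothesis of Proposition \ref{reduction-proposition}, and that the empty set of variables $\Lambda$ is a legitimate degenerate choice) are points the paper leaves implicit.
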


\begin{proof}
We apply Proposition \ref{1} with $\b=(x_u)$ to get $\nu(x_u\mathcal
D)=0$. We adopt the notation in \ref{reduction-x} with $x=x_u$.
 Since $\ov{\mathcal K^u}$ is the corresponding Koszul complex of the Golod Koszul ring $\ov R=R/(x_u)$,  Remark \ref{Golod-ring} gives $\nu(\overline{\m}\overline{\mathcal{K}^u})=0$.  We apply Proposition \ref{reduction-proposition} with $s=u$, $\mathcal E=\mathcal D$ and $\mathcal A=\mathcal K^u$ to conclude that $\nu(\m \mathcal{D})=0$.
\end{proof}

\begin{proposition}
\label{mix} Assume $x_sx_t=0$. Let $\mathcal A$ be a minimal
semi-free $\Gamma$-extension of $R$. Set
$$\mathcal E=R\langle X_s,\Lambda\mid \partial(X_s)=x_s, \partial(\Lambda)=x_tX_s\rangle\quad\text{and}\quad \mathcal D=\mathcal E\otimes_R\mathcal A\,.$$
Let $\a\subseteq \m$ be an ideal such that for any $x\in
\{x_t,x_s\}$ the following hold:
\begin{enumerate}[\quad\rm(1)]
\item $\m^2=x\m+\a\m$;
\item $x\a=0$;
\item $\nu(\ov\a\ov{\mathcal A})=0$,
where  overbars denote the corresponding objects  modulo $(x)$ as in
{\rm \ref{reduction-x}}.
\end{enumerate}
 Then  $\nu(\m\mathcal D)=0$.
\end{proposition}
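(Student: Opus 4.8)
The plan is to combine Proposition \ref{reduction-proposition} with an auxiliary vanishing computation. First I would recognize the shape of $\mathcal E$: it is a minimal semi-free $\Gamma$-extension $\mathcal K\langle X_s, \Lambda\rangle$ of the Koszul-type algebra $R\langle X_s\rangle$, and tensoring with $\mathcal A$ yields a minimal semi-free $\Gamma$-extension $\mathcal D$ of $R$. Applying Proposition \ref{reduction-proposition} with the chosen $x_s$ requires two inputs: (a) $x_s\m^2=0$, and (b) $\nu(x_s\mathcal E')=0$ for the appropriate sub-extension $\mathcal E'$ of $\mathcal D$ that adjoins only the degree-$2$ variables not already present, together with (c) $\nu(\ov\m\,\ov{\mathcal A'})=0$ for the complementary extension over $\mathcal K^s$. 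Condition (a) follows from hypothesis (1): $\m^2 = x_t\m + \a\m = x_s\m+\a\m$, so multiplying by $x_s$ and using $x_sx_t=0$ and $x_s\a=0$ gives $x_s\m^2 = x_s^2\m = 0$. For condition (c), the complementary part of $\mathcal D$ over $\mathcal K^s$ is essentially $\ov{\mathcal A}$ after reduction mod $x_s$ (the variable $X_s$ and the new variable $\Lambda$ are killed), so hypothesis (3) with $x=x_s$, reading $\nu(\ov\a\,\ov{\mathcal A})=0$, must be leveraged — but I first need to upgrade it from $\ov\a$ to $\ov\m$.

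The heart of the argument is therefore a lemma in the style of Proposition \ref{1}: under $x_sx_t=0$, $x\a=0$ for $x\in\{x_s,x_t\}$, and $\m^2 = x\m+\a\m$, I would show that $\nu(\ov\m\,\ov{\mathcal A})$ vanishes given $\nu(\ov\a\,\ov{\mathcal A})=0$, and that $\nu(x_s\mathcal E')=0$. The key identities are $\ov\m\,{}^2 = \ov\a\,\ov\m$ (since $x$ maps to $0$ in $\ov R$), which forces $Z(\ov\m\,{}^2\ov{\mathcal A})\subseteq Z(\ov\a\,\ov\m\,\ov{\mathcal A})$, and then the hypothesis $\nu(\ov\a\,\ov{\mathcal A})=0$ places these cycles in $B(\ov\a\,\ov{\mathcal A})\subseteq B(\ov\m\,\ov{\mathcal A})$; a symmetric role is played by reducing modulo $x_t$. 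For the $x_s\mathcal E'$ vanishing I would run an induction on homological degree along the divided powers $\Lambda^{(m)}$ and the generator $X_s$, exactly as in the proof of Proposition \ref{1}: using $\partial(\Lambda)=x_tX_s$, $\partial(X_s)=x_s$, and $x_sx_t=0$, one checks $\partial(x_s\Lambda^{(m+1)})=x_sx_tX_s\Lambda^{(m)}=0$ and $\partial(x_sX_s\Lambda^{(m)}) = x_s^2\Lambda^{(m)}=0$ directly inside $\mathcal E'$, and then propagates via the Leibniz rule against $\mathcal A$ (which is acyclic in positive degrees onto $\m\mathcal A$) to conclude every element of $x_s\m\mathcal D$ is a boundary in $\m\mathcal D$.

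I would then assemble: Proposition \ref{reduction-proposition} with $x=x_s$ gives $\nu(\m\mathcal D)=0$ outright once (a), the $x_s$-extension vanishing, and the reduced-mod-$x_s$ vanishing are in hand. The hypotheses for $x=x_t$ are used in the symmetric half of the auxiliary lemma to make the $\ov\a\to\ov\m$ upgrade work — without both $x_s\a=0$ and $x_t\a=0$ and both decompositions $\m^2=x_s\m+\a\m=x_t\m+\a\m$, one cannot control all of $Z(\ov\m\,{}^2\ov{\mathcal A})$. The main obstacle I anticipate is the bookkeeping in identifying $\mathcal D = \mathcal E\otimes_R\mathcal A$ with the form $\mathcal K\langle\Upsilon\cup\Lambda'\rangle$ required by Proposition \ref{reduction-proposition}: one must match $\mathcal E$ against the "$\mathcal K\langle\Upsilon\rangle$" slot while folding the $\Gamma$-extension data of $\mathcal A$ into "$\mathcal K^s\langle\Lambda\rangle$", checking that $X_s$ and the single new variable $\Lambda$ behave as the proposition's $\Upsilon'$, and verifying that $\mathcal D$ is free both as a graded $\mathcal E$-module and (after reduction) as a graded $\ov{\mathcal A}$-module so that Levin's Lemma \ref{Levin} applies on both sides.
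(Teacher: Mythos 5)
Your preliminary reductions are fine: $x\m^2=0$ for $x\in\{x_s,x_t\}$ does follow from (1), (2) and $x_sx_t=0$, and the upgrade from $\nu(\ov\a\ov{\mathcal A})=0$ to $\nu(\ov\m\ov{\mathcal A})=0$ modulo $x_s$ works exactly as you say, since $\ov\m^2=\ov\a\ov\m$. The gap is in the other input you need for Proposition \ref{reduction-proposition}, namely $\nu(x_s\mathcal E')=0$. First, your computation $\partial(x_sX_s\Lambda^{(m)})=x_s^2\Lambda^{(m)}=0$ is false in general: the hypotheses only give $x_sx_t=0$, and $s\ne t$ is allowed (this is precisely the situation in Corollary \ref{apply-st} as used for rings with $\m^2=x_1\m=x_2\m$, $x_1x_2=0$), so $x_s^2\ne 0$. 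More seriously, the correct identities $\partial(\Lambda^{(m+1)})=x_tX_s\Lambda^{(m)}$ and $\partial(X_s\Lambda^{(m)})=x_s\Lambda^{(m)}$ only realize $x_t\mathcal E_{2k+1}$ and $x_s\mathcal E_{2k}$ as boundaries: which of $x_s$, $x_t$ you can absorb into a boundary \emph{alternates with the parity of the homological degree} in $\mathcal E$. There is no single element $x$ with $x\mathcal E_i\subseteq B_i(\mathcal E)$ for all $i$, the hypotheses of Proposition \ref{1} (which would require $\m(x_s)=x_s(x_s)=x_t(x_s)$) are unavailable, and since $x_s\m$ can be all of $\m^2$ while $B(x_s\mathcal E)=x_s\partial(\mathcal E)$ is much smaller, $\nu(x_s\mathcal E)=0$ simply fails in the generality of the proposition. (There is also the formal mismatch you flag: Proposition \ref{reduction-proposition} requires $\mathcal D=\mathcal K\langle\Upsilon\cup\Lambda\rangle$ with the full Koszul complex present and only degree-$2$ variables adjoined, whereas here $\mathcal A$ is an arbitrary minimal semi-free $\Gamma$-extension of $R$.)

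The paper's proof is forced to work bigraded precisely because of this alternation. It decomposes $Z(\m^2\mathcal D)$ along the summands $\mathcal E_i\otimes_R\mathcal A_j$, proves $x_{r(i)}\mathcal E_i=B_i(\mathcal E)$ where $r(i)=s$ for $i$ even and $r(i)=t$ for $i$ odd, and then runs a descending induction on $j$: a cycle supported in $\mathcal E_i\otimes_R\mathcal A_j$ is written, using hypothesis (1) with $x=x_{r(i)}$, as a sum of a piece in $x_{r(i)}\m\mathcal E_i\otimes\mathcal A_j$ (absorbed into $B(\m\mathcal D)$ at the cost of a term in $\m^2\mathcal E_{i+1}\otimes\mathcal A_{j-1}$) and a piece in $\mathcal E_i\otimes Z_j(\a\m\mathcal A)$, which hypothesis (3) with $x=x_{r(i)}$ pushes into $B(\a\mathcal A)+x_{r(i)}\m\mathcal A$. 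This is why hypotheses (1)--(3) are needed for both $x=x_s$ and $x=x_t$: the relevant $x$ at bidegree $(i,j)$ is $x_{r(i)}$ and both parities occur, not (as you suggest) because of the $\ov\a\to\ov\m$ upgrade, which uses only the $x=x_s$ hypotheses. To repair your argument you would have to replace the appeal to Proposition \ref{reduction-proposition} by this parity-sensitive induction, at which point you have reproduced the paper's proof.
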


\begin{proof} Using (1),  (2) and the assumption  $x_sx_t=0$,  we see $x\m^2=0$ for $x\in\{x_t,x_s\}$. Indeed, without loss of generality, take $x=x_s$. We have then $x_s\m^2=x_s(x_t\m+\a\m)=0$.

With $x\in \{x_s, x_t\}$, hypothesis (1) further gives:
\begin{equation}
\label{reformulate1} Z(\m^2\mathcal D)=x\m\mathcal D+Z(\a\m\mathcal
D)\,.
\end{equation}

Let  $w=\sum_i\varepsilon_i \otimes c_i\in \a\m\mathcal D$ with
$c_i\in\a\m\mathcal A$, where the elements $\varepsilon_i$ form a
basis of $\mathcal E$ over $R$. (The  sum has only finitely many
non-zero terms, for degree reasons.) Note that
$\partial(\varepsilon_i)\in (x_s,x_t)\mathcal E$. We have
$(x_s,x_t)c_i=0$ for all $i$ by the hypothesis (2), hence
$\partial(w)=\sum_i\pm \varepsilon_i \otimes \partial(c_i)$. Since
$\mathcal A$ is free as an $R$-algebra,  we have $\partial(w)=0$ if
and only if $\partial(c_i)=0$ for all $i$. We have thus
\begin{equation}
\label{ij-first}
 Z(\a\m\mathcal D)=\mathcal E\otimes_RZ(\a\m\mathcal A)\,.
\end{equation}
In particular, one can see from \eqref{reformulate1} and \eqref{ij-first}
that we have for all $n$:
\begin{equation}
\label{reformulate2} Z_n(\m^2\mathcal D)=\sum_{i+j=n}(\m^2\mathcal
E_i\otimes_R \mathcal A_j)\cap Z(\m^2\mathcal
D)=\sum_{i+j=n}\left(x\m\mathcal E_i\otimes_R\mathcal A_j+\mathcal
E_i\otimes_RZ_j(\a\m\mathcal A)\right)
\end{equation}
where we are using the following notation convention: If $M$ is a submodule of $\mathcal E_i$ and $N$ is a submodule of $\mathcal A_j$, then we identify $M\otimes_R\mathcal A_j$ and  $\mathcal E_i\otimes_RN$ with their homomorphic  images in $\mathcal E_i\otimes_R\mathcal A_j$. (Recall that $\mathcal E_i$ and $\mathcal A_j$ are free $R$-modules.) In particular, if $\b$ is an ideal of $R$, we have $\mathcal E_i\otimes_R\b\mathcal A_j=\b(\mathcal E_i\otimes_R\mathcal A_j)=\b \mathcal  E_i\otimes_R\mathcal A_j$.

Note that  $x_tX_s\Lambda^{(k)}=\partial(\Lambda^{(k+1)})$ for all integers
$k$, and hence $x_t\mathcal{E}_{2k+1}= B_{2k+1}(\mathcal{E})$. Also,
since $X_s^2=0$ we see that  $x_s\Lambda^{(k)}=\partial(X_s\Lambda^{(k)})$, thus
$ x_s\mathcal{E}_{2k}= B_{2k}(\mathcal{E})$. We set $r(i)=s$ if $i$
is an even integer and $r(i)=t$ if $i$ is odd.  Therefore we have
\begin{equation}
\label{r} x_{r(i)}\mathcal{E}_i=B_i(\mathcal{E})\quad\text{for all
$i$.}
\end{equation}
Let $i\ge 0$, $j\ge -1$. We use \eqref{r}, the Leibnitz rule and the
minimality of  $\mathcal A$ to get:
\begin{equation}
\label{comp1}
x_{r(i)}\m\mathcal E_i\otimes \mathcal A_j\subseteq B_i(\m\mathcal E)\otimes_R\mathcal A_j\subseteq B_{i+j}(\m\mathcal D)+\m^2\mathcal E_{i+1}\otimes_R \mathcal A_{j-1}\,.
\end{equation}
 Using \eqref{implication} and the hypothesis (3), we have  $Z(\a\m\mathcal A)\subseteq B(\a\mathcal A)+x\m\mathcal A$.
This  justifies the first line below:
\begin{align}
\label{comp2}
\begin{split}
\mathcal E_i\otimes_RZ_{j}(\a\m\mathcal A)&\subseteq \mathcal E_i\otimes_RB_{j}\left(\a\mathcal A\right)+\mathcal E_i\otimes_Rx_{r(i)}\m\mathcal A_{j}\\
&\subseteq B(\mathcal E\otimes_R \a\mathcal A)+\a B_{i-1}(\mathcal E_i)\otimes_R\mathcal A_{j+1}+x_{r(i)}\m\mathcal E_i\otimes_R \mathcal A_{j}\\
&\subseteq B(\m\mathcal D)+\a x_{r(i-1)}\mathcal E_{i-1}\otimes_R\mathcal A_{j+1}+x_{r(i)}\m\mathcal E_i\otimes_R \mathcal A_{j}\\
&=B(\m\mathcal D)+\m^2\mathcal E_{i+1}\otimes_R \mathcal
A_{j-1}.
\end{split}
\end{align}
We also used the Leibnitz rule in the second line, \eqref{r} in the
third, and the  hypothesis (2) and \eqref{comp1} in
the last line.

Putting together  \eqref{reformulate2} with $x=x_{r(i)}$,
\eqref{comp1} and \eqref{comp2} we have:
\begin{align*}
(\m^2\mathcal E_i\otimes_R\mathcal A_{j})\cap Z(\m^2\mathcal D)&=x_{r(i)}\m\mathcal E_i\otimes_R\mathcal A_{j}+\mathcal E_i\otimes Z_{j}(\a\m\mathcal A)\\
&\subseteq  B(\m\mathcal D)+(\m^2 \mathcal E_{i+1}\otimes_R\mathcal
A_{j-1})\cap Z(\m^2\mathcal D)\,.
\end{align*}
We induct on $j$. The  inclusion  $(\m^2\mathcal
E_i\otimes_R\mathcal A_{j})\cap Z(\m^2\mathcal D)\subseteq
B(\m\mathcal D)$ obviously holds for $j=-1$ and all $i\ge 0$. Arguing now by
induction on $j$ we see that $(\m^2\mathcal E_i\otimes_R\mathcal
A_{j})\cap Z(\m^2\mathcal D)\subseteq B(\m\mathcal D)$  holds for
all $j\ge -1$ and all $i\ge 0$. We obtain $Z( \m^2\mathcal
D)\subseteq B(\m\mathcal D)$.
\end{proof}

\begin{corollary}
\label{apply-st} Assume $x_sx_t=0$ and $\m^2=x_s\m=x_t\m$.

If $\mathcal D=\mathcal K\langle \Lambda\mid \partial(\Lambda)=x_tX_s \rangle$,
then $\nu(\m\mathcal D)=0$.
\end{corollary}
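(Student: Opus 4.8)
The plan is to derive this statement from Proposition \ref{mix} by taking the ideal $\a$ there to be the zero ideal. First I would put $\mathcal A = \mathcal K^s$ (the Koszul subalgebra of $\mathcal K$ on the variables $X_j$ with $j\ne s$), which is a minimal semi-free $\Gamma$-extension of $R$, and let $\mathcal E = R\langle X_s,\Lambda\mid\partial(X_s)=x_s,\,\partial(\Lambda)=x_tX_s\rangle$ exactly as in Proposition \ref{mix}. The main bookkeeping point is the identification
\[
\mathcal D = \mathcal K\langle\Lambda\mid\partial(\Lambda)=x_tX_s\rangle \cong \mathcal E\otimes_R\mathcal K^s,
\]
which holds because $\mathcal K = \mathcal K^s\langle X_s\rangle$, the variable $X_s$ is adjoined only inside $\mathcal E$ (and not inside $\mathcal A = \mathcal K^s$), so no variable is repeated, and $X_s$, $\Lambda$ sit in distinct homological degrees so the order of adjunction is immaterial.

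Next I would check the hypotheses (1)--(3) of Proposition \ref{mix} for $\a = 0$ and each $x\in\{x_s,x_t\}$. Hypothesis (1), namely $\m^2 = x\m + \a\m = x\m$, is exactly the standing assumption $\m^2 = x_s\m = x_t\m$. Hypotheses (2), $x\a = 0$, and (3), $\nu(\ov\a\ov{\mathcal A}) = 0$, are vacuous since $\a = 0$ forces $\ov\a = 0$, so that $\ov\a\ov{\mathcal A}$ is the zero complex. Together with the hypothesis $x_sx_t = 0$, all the assumptions of Proposition \ref{mix} are then satisfied, and it yields $\nu(\m\mathcal D) = 0$.

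I do not expect a genuine obstacle here; the only points requiring a moment of care are that the specialization $\a = 0$ is legitimate in Proposition \ref{mix} (nothing in its statement or proof forbids it, and all the auxiliary objects such as $Z(\a\m\mathcal A)$ and $B(\a\mathcal A)$ simply vanish) and that the identification $\mathcal D \cong \mathcal E\otimes_R\mathcal K^s$ respects differentials on the nose, so that the conclusion $\nu(\m\mathcal D)=0$ transfers back to the $\mathcal D$ of the corollary without change.
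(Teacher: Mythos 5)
Your proposal is correct and is exactly the paper's proof: the paper deduces Corollary \ref{apply-st} by applying Proposition \ref{mix} with $\a=0$ and $\mathcal A=\mathcal K^s$. The additional checks you carry out (the identification $\mathcal D\cong\mathcal E\otimes_R\mathcal K^s$ and the vacuity of hypotheses (2)--(3) when $\a=0$) are the details the paper leaves implicit, and they go through as you describe.
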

\begin{proof}
Apply Proposition \ref{mix} with $\a=0$ and $\mathcal A=\mathcal
K^s$.
\end{proof}

In \cite{AIS},  an element $x\in \m$ such that $x\m=\m^2$ and $x^2=0$ is
called a {\it Conca generator}. If we take $s=t$ in the statement of Corollary \ref{apply-st}, then the hypothesis is that $x_s$ is a Conca generator. Together with Corollary \ref{tool}, Corollary \ref{apply-st} can be used to recover the statement of \cite[Theorem 1.4]{AIS}.

\begin{lemma}
\label{ann-connect} Assume $\a$ and $\b$ are ideals contained in
$\m$  such that:
\begin{enumerate}[\quad\rm(1)]
\label{ann-connect-hypothesis1}
\item $\b\m^2=0$ and $\m^2=\b\m+x_s\a$;
\item $\ann(x_s)\cap \m^2\subseteq \b\m$.
\end{enumerate}
If $\mathcal D$ is a minimal semi-free $\Gamma$-extension of
$R\langle X_s\mid \partial(X_s)=x_s\rangle$, then the following
hold:
\begin{enumerate}[\quad\rm(a)]
\item $Z(\m^2{\mathcal D})\subseteq B(\a{\mathcal D})+\b\m\mathcal D$;
\item If $\b=(x_t)$, then $\nu(\ov \a\ov{\mathcal D})=0$, where overbars denote classes modulo $(x_t)$ as in {\rm \ref{reduction-x}};
\item If  $\nu(\b\mathcal D)=0$ then $\nu(\m \mathcal D)=0$.
\end{enumerate}
\end{lemma}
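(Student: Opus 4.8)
The plan is to prove the three statements in order, deducing (b) and (c) from (a) plus the hypotheses. For part (a), I would argue exactly as in the proof of Proposition \ref{1} and Proposition \ref{mix}: pick a homogeneous cycle $w\in Z(\m^2\mathcal D)$ and, using hypothesis (1) in the form $\m^2=\b\m+x_s\a$, write $w$ as a sum $w'+w''$ where $w'\in \b\m\mathcal D$ and $w''\in x_s\a\mathcal D$. The key point is that $x_s\mathcal E_i=B_i(\mathcal E)$ for the subalgebra $\mathcal E=R\langle X_s\rangle$ (since $\partial(X_s)=x_s$ and $X_s^2=0$, every element of $x_s\mathcal E_i$ is $\partial$ of something in $\mathcal E_{i+1}$), so elements of $x_s\a\mathcal D$ are, modulo $B(\a\mathcal D)$ and modulo higher filtration terms in the $X_s$-degree, boundaries. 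Combined with $\b\m^2=0$ (so that $\partial$ kills the $\b\m$-part after one step) and hypothesis (2) $\ann(x_s)\cap\m^2\subseteq \b\m$ (needed to control the cycle condition: the $x_s$-divisible part of a cycle lands, after differentiating, in $\ann(x_s)\cap \m^2\mathcal D\subseteq \b\m\mathcal D$), an induction on homological degree analogous to the one in Proposition \ref{mix} gives $Z(\m^2\mathcal D)\subseteq B(\a\mathcal D)+\b\m\mathcal D$. I expect this induction — getting the bookkeeping on the $X_s$-degree filtration and the repeated use of the Leibnitz rule and minimality of $\mathcal A$ right — to be the main obstacle, just as it was the technical heart of Propositions \ref{1} and \ref{mix}.

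For part (b), assume $\b=(x_t)$ and reduce modulo $x=x_t$ using the notation of \ref{reduction-x}. By part (a) we have $Z(\m^2\mathcal D)\subseteq B(\a\mathcal D)+x_t\m\mathcal D\subseteq B(\a\mathcal D)+x_t\mathcal D$. Since $B(\a\mathcal D)\subseteq B(\m\mathcal D)$ and $\a\mathcal D\subseteq \m\mathcal D$, and since $Z(\m\a\mathcal D)\subseteq Z(\m^2\mathcal D)$, the inclusion $Z(\m\a\mathcal D)\subseteq B(\a\mathcal D)+x_t\mathcal D$ holds; as both $Z(\m\a\mathcal D)$ and $B(\a\mathcal D)$ sit inside $\m^2\mathcal D$, the argument recorded in \eqref{implication} of \ref{reduction-x} applies verbatim and yields $\nu(\ov\a\,\ov{\mathcal D})=0$.

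For part (c), suppose $\nu(\b\mathcal D)=0$, i.e.\ $Z(\m\b\mathcal D)\subseteq B(\b\mathcal D)$, hence in particular every element of $\b\m\mathcal D$ that is a cycle is a boundary of $\b\mathcal D\subseteq \m\mathcal D$; more to the point, I would want $\b\m\mathcal D\subseteq B(\m\mathcal D)+Z(\b\m\mathcal D)$ — but actually the clean route mirrors the end of the proof of Proposition \ref{reduction-proposition}: from part (a), $Z(\m^2\mathcal D)\subseteq B(\a\mathcal D)+\b\m\mathcal D\subseteq B(\m\mathcal D)+\b\m\mathcal D$. Now $\b\m^2=0$ forces $\b\m\mathcal D\subseteq Z(\b\m\mathcal D)\subseteq Z(\m\b\mathcal D)$ (using $\b\m\subseteq\m\b$), and $\nu(\b\mathcal D)=0$ gives $Z(\m\b\mathcal D)\subseteq B(\b\mathcal D)\subseteq B(\m\mathcal D)$. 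Combining, $Z(\m^2\mathcal D)\subseteq B(\m\mathcal D)$, which is exactly $\nu(\m\mathcal D)=0$. The only subtlety is checking the inclusions $\b\m\mathcal D\subseteq Z(\m\b\mathcal D)$ and $B(\a\mathcal D)\subseteq B(\m\mathcal D)$, both of which are immediate from $\b\m^2=0$, $\a\subseteq\m$, and $\b\subseteq\m$.
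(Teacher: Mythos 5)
Your plan is essentially the paper's proof: parts (b) and (c) are argued exactly as there, and for (a) you have already isolated every ingredient the paper uses. One clarification will save you the work you are dreading: no induction on homological degree and no $X_s$-degree filtration is needed for (a) --- and for a general minimal semi-free $\Gamma$-extension of $R\langle X_s\rangle$ there is in fact no evident differential-compatible $X_s$-filtration to induct along, since the adjoined variables may have differentials involving $X_s$. Instead, write a cycle $k\in Z_i(\m^2\mathcal D)$ as $k=k'+x_sk''$ with $k'\in\b\m\mathcal D_i$ and $k''\in\a\mathcal D_i$, using hypothesis (1). Minimality of $\mathcal D$ and $\b\m^2=0$ force $\partial(k')=0$, hence $x_s\partial(k'')=0$; since $\mathcal D_{i-1}$ is a free $R$-module and $\partial(k'')\in\m^2\mathcal D_{i-1}$ by minimality, this gives $\partial(k'')\in\left(\ann(x_s)\cap\m^2\right)\mathcal D_{i-1}\subseteq\b\m\mathcal D_{i-1}$ by hypothesis (2) --- exactly the observation you make. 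The single Leibnitz identity $x_sk''=\partial(X_sk'')\pm X_s\partial(k'')$ then places $x_sk''$ in $B(\a\mathcal D)+\b\m\mathcal D$, and (a) is finished with no further bookkeeping. A minor correction: your parenthetical claim that $x_s\mathcal E_i=B_i(\mathcal E)$ for $\mathcal E=R\langle X_s\rangle$ holds only for $i=0$ (one has $\mathcal E_2=0$, so $B_1(\mathcal E)=0$, while $x_s\mathcal E_1=x_sRX_s$ need not vanish); but only the instance $x_s=\partial(X_s)$ is actually used, so nothing is harmed.
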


\begin{proof}
We first show that the hypothesis (1)  implies
\begin{equation}
\label{ann-connect-formula}Z_i(\m^2{\mathcal D})\subseteq
B_i(\a{\mathcal D})+\left(\ann(x_s)\cap \m^{2}\right){\mathcal
D}_i+\b\m\mathcal D_i\quad\text{for all $i\ge 0$.}
\end{equation}
Let  $k\in \m^2{\mathcal D}_i$ be a cycle. We  write
$$k=k'+x_sk''\quad\text{with}\quad k'\in \b\m {\mathcal D}_i\quad\text{and}\quad k''\in \a{\mathcal D}_i\,.
$$
 Since $\b\m^2=0$ and ${\mathcal D}$ is minimal, we see that $k'$ is a cycle, hence $x_sk''$ must be a cycle as well. We have then $x_s\partial(k'')=0$. Since ${\mathcal D}_{i-1}$ is a free $R$-module and $\partial(k'')\in \m^{2}{\mathcal D}_{i-1}$, we conclude  $\partial(k'')\in \left (\ann(x_s)\cap \m^2\right ){\mathcal D}_{i-1}$.  We have then
$$
x_sk''=\partial(X_s)k''=\partial(X_sk'')+X_s\partial(k'')\in
\partial(\a{\mathcal D}_{i+1})+\left(\ann(x_s)\cap
\m^{2}\right){\mathcal D}_i\,,$$ where the second equality comes
from the Leibnitz formula. This  establishes
\eqref{ann-connect-formula}. If we further employ the hypothesis (2), we obtain (a). Part (b) follows
directly from (a).

To  explain (c), note that the hypothesis $\nu(\b\mathcal D)=0$ and the hypothesis $\b\m^2=0$
imply $$\b\m\mathcal D=Z(\b\m{\mathcal D})\subseteq B(\b{\mathcal
D})\,.$$ In  conjunction with (a), this shows $\nu(\m\mathcal
D)=0$.
\end{proof}

\begin{proposition}
\label{useful} Assume $\b$ is an ideal contained in $\m$
satifying the following conditions: $\m^2=\b\m+x_s\a$,   $\b\m=x_t\b$, $\ann(x_s)\cap\m^2\subseteq \b\m$ and $x_t^2\in
\ann(\b)\m$.

Let $\mathcal D=\mathcal K\langle \Upsilon \mid
\partial(\Upsilon)=x_tX_t-K\rangle$, where $K\in\ann(\b)\mathcal K_1$ is
such that $\partial(x_tX_t-K)=0$.

 Then  $\nu(\m\mathcal D)=0$.
\end{proposition}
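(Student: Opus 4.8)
The plan is to derive the statement by feeding the output of Proposition \ref{1} into Lemma \ref{ann-connect}.

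First I would invoke Proposition \ref{1} with the role of the index ``$s$'' there played by our $t$ (the fixed integers in this section need not be distinct), keeping the same ideal $\b$. In that degenerate case the hypotheses $\m\b = x_t\b = x_s\b$ and $x_tx_s\in\ann(\b)\m$ of Proposition \ref{1} collapse to exactly $\b\m = x_t\b$ and $x_t^2\in\ann(\b)\m$, both of which are assumed here, and the complex constructed there, $\mathcal K\langle\Upsilon\mid\partial(\Upsilon) = x_tX_t - K\rangle$, is precisely our $\mathcal D$. Proposition \ref{1} therefore supplies the two facts $\b\m^2 = 0$ and $\nu(\b\mathcal D) = 0$.

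Next I would verify the hypotheses of Lemma \ref{ann-connect} for $\mathcal D$ and the ideals $\a,\b$ (here $\a\subseteq\m$, as is the standing convention in this section). The complex $\mathcal D$ is a minimal semi-free $\Gamma$-extension of $R\langle X_s\mid\partial(X_s)=x_s\rangle$: it is $\mathcal K = R\langle X_1,\dots,X_e\rangle$ with a single adjoined DG $\Gamma$-variable $\Upsilon$ of homological degree $2$, and it is minimal because $x_tX_t - K$, being a cycle in $\mathcal K_1$, necessarily lies in $\m\mathcal K_1$. Condition (1) of Lemma \ref{ann-connect} requires $\b\m^2=0$, just obtained, together with $\m^2 = \b\m + x_s\a$, which is a hypothesis here; condition (2) requires $\ann(x_s)\cap\m^2\subseteq\b\m$, also a hypothesis here. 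Applying part (c) of Lemma \ref{ann-connect} with the input $\nu(\b\mathcal D)=0$ then yields $\nu(\m\mathcal D)=0$, which is the assertion.

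The argument has no genuinely hard step: the substance has already been packaged into Proposition \ref{1} and Lemma \ref{ann-connect}, and what remains is just matching hypotheses. The one point demanding a little care is running Proposition \ref{1} with its two indices equal — in particular the implicit choice $K = \sum_i a_iX_i$, where $x_t^2 = \sum_i a_ix_i$ with $a_i\in\ann(\b)$, which coincides with the choice made inside the proof of Proposition \ref{1} and also makes the minimality of $\mathcal D$ transparent.
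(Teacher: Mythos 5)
Your proposal is correct and is exactly the paper's argument: Proposition \ref{1} applied with both indices equal to $t$ yields $\nu(\b\mathcal D)=0$ and $\b\m^2=0$, and Lemma \ref{ann-connect}(c) then gives $\nu(\m\mathcal D)=0$. The paper states this in one line; your version merely spells out the hypothesis-matching.
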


\begin{proof}
Use Proposition \ref{1} to see $\nu(\b\mathcal D)=0$ and
$\b\m^2=0$, then apply Lemma \ref{ann-connect}(c).
\end{proof}

\begin{proposition}
\label{nonA-use} Assume $\m^2=x_t\m+(x_s^2)$ and $x_t^2=0=x_sx_t$.

 If $R$ is not artinian and  $\mathcal D=\mathcal K\langle \Lambda\mid \partial(\Lambda)=x_tX_t\rangle$, then $\nu(\m\mathcal D)=0$.
\end{proposition}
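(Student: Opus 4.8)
The plan is to build an ideal $\b\subseteq\m$ that simultaneously satisfies the hypotheses of Proposition~\ref{1} (so that $\nu(\b\mathcal D)=0$) and those of Lemma~\ref{ann-connect} (so that this upgrades to $\nu(\m\mathcal D)=0$). The candidate I would use is the colon ideal $\b=(x_t\m:\m)=\{r\in R:r\m\subseteq x_t\m\}$. Since $x_t\m\subseteq\m^2$, Nakayama gives $\b\subseteq\m$, and $x_t\in\b$, so $x_t\m\subseteq\b\m\subseteq x_t\m$; thus $\b\m=x_t\m$, and hence $\b\m^2=x_t\m^2$, which vanishes because $x_t\m^2=x_t\bigl(x_t\m+(x_s^2)\bigr)=0$ by the hypotheses $x_t^2=0=x_sx_t$. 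Taking $\a=(x_s)$ we get $\m^2=x_t\m+(x_s^2)=\b\m+x_s\a$, so condition (1) of Lemma~\ref{ann-connect} holds. It then remains to establish two facts: the identity $\ann(x_s)\cap\m^2=x_t\m$ (which gives condition (2) of Lemma~\ref{ann-connect}, and is also the engine for the last point below) and the equality $x_t\b=x_t\m$ (needed so that $\m\b=x_t\b$, the hypothesis of Proposition~\ref{1}). This is where the non-Artinian assumption enters.

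First I would record the structural consequences of the hypotheses. Using $x_t^2=0=x_sx_t$ one has $x_t\m^2=0$, hence $\m^3=\m\cdot\m^2=x_s^2\m$; and since each $x_sx_i$ lies in $x_t\m+(x_s^2)$ one gets $x_s^2x_i=r_ix_s^3$ for suitable $r_i$, so $\m^3=Rx_s^3$, and inductively $\m^n=Rx_s^n$ for all $n\ge 3$. Because $R$ is not Artinian, $\m^n\ne 0$, so $x_s$ is not nilpotent. Now let $w\in\ann(x_s)\cap\m^2$ and write $w=x_tm+rx_s^2$; then $x_sw=rx_s^3=0$, so $r$ is not a unit (else $x_s^3=0$), i.e.\ $r\in\m$, whence $rx_s^2\in\m\cdot(x_s^2)\subseteq\m^3=Rx_s^3$. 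Iterating — each time writing $rx_s^2=\sigma x_s^k$, multiplying by $x_s$ to force $\sigma\in\m$, and concluding $rx_s^2\in\m^{k+1}$ — gives $rx_s^2\in\bigcap_k\m^k=0$, so $w=x_tm\in x_t\m$; the reverse inclusion is trivial, so $\ann(x_s)\cap\m^2=x_t\m$.

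Next I would prove $x_t\b=x_t\m$; only the inclusion $\supseteq$ needs work. Given $y\in\m$, choose $m_s\in\m$ and $r_s\in R$ with $yx_s=x_tm_s+r_sx_s^2$ and set $v=y-r_sx_s$. Then $x_tv=x_ty$ (as $x_tx_s=0$), $vx_s=x_tm_s\in x_t\m$, and for each generator $x_i$ of $\m$ one has $x_s(vx_i)=(vx_s)x_i=x_tm_sx_i\in x_t\m^2=0$, so $vx_i\in\ann(x_s)\cap\m^2=x_t\m$ by the previous step. Hence $v\m\subseteq x_t\m$, i.e.\ $v\in\b$, and $x_ty=x_tv\in x_t\b$. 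Combining everything, $\m\b=\b\m=x_t\m=x_t\b$ and $x_t^2=0\in\ann(\b)\m$, so Proposition~\ref{1}, applied with its two distinguished variables both taken equal to $x_t$ and with $K=0$ (legitimate since $\partial(x_tX_t)=x_t^2=0$), yields $\b\m^2=0$ and $\nu(\b\mathcal D)=0$ for $\mathcal D=\mathcal K\langle\Lambda\mid\partial\Lambda=x_tX_t\rangle$.

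To finish, note that $\mathcal D$ is a minimal semi-free $\Gamma$-extension of $R\langle X_s\mid\partial X_s=x_s\rangle$ (adjoin the remaining $X_i$, then $\Lambda$; all differentials lie in $\m\mathcal D$), and that conditions (1) and (2) of Lemma~\ref{ann-connect} have been verified for $\b=(x_t\m:\m)$ and $\a=(x_s)$. Since $\nu(\b\mathcal D)=0$, part (c) of that lemma gives $\nu(\m\mathcal D)=0$, as desired. The main obstacle is the structural identity $\ann(x_s)\cap\m^2=x_t\m$ together with the ensuing equality $x_t\b=x_t\m$; these genuinely require $R$ to be non-Artinian (through $x_s$ not being nilpotent and Krull's intersection theorem), while everything else is formal bookkeeping around Proposition~\ref{1} and Lemma~\ref{ann-connect}.
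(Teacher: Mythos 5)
Your proof is correct, and while it shares its structural core with the paper's argument, it assembles the conclusion by a genuinely different route. Like the paper, you first derive $\m^i=(x_s^i)$ for $i\ge 3$, use the non-Artinian hypothesis to see that $x_s$ is not nilpotent, and then prove $\ann(x_s)\cap\m^2= x_t\m$ by the descent-plus-Krull-intersection argument; all of this matches the paper's computation. The divergence is in the second half. The paper passes to $\ov R=R/(x_t)$, applies Lemma \ref{ann-connect}(b) with $\b=(x_t)$, $\a=(x_s)$ to get $\nu(\ov x_s\ov{\mathcal K^t})=0$, and then feeds this into the tensor-decomposition machinery of Proposition \ref{mix}, writing $\mathcal D=R\langle X_t,\Lambda\rangle\otimes_R\mathcal K^t$. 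You instead stay entirely inside $\mathcal D$: you enlarge $(x_t)$ to the colon ideal $\b=(x_t\m:\m)$, which is exactly what is needed to make $\m\b=x_t\b$ hold (it fails for $(x_t)$ itself, since $x_t(x_t)=(x_t^2)=0$ while $x_t\m$ need not vanish), verify the nontrivial inclusion $x_t\m\subseteq x_t\b$ by correcting an arbitrary $y\in\m$ to $v=y-r_sx_s\in\b$ using the identity $\ann(x_s)\cap\m^2=x_t\m$ once more, apply Proposition \ref{1} with both distinguished indices equal to $t$ and $K=0$ to get $\nu(\b\mathcal D)=0$, and finish with Lemma \ref{ann-connect}(c). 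I checked the hypotheses at each step ($\b\m=x_t\m=x_t\b$, $\b\m^2=0$, $\m^2=\b\m+x_s(x_s)$, $\ann(x_s)\cap\m^2\subseteq\b\m$, and the minimality of $\mathcal D$ over $R\langle X_s\rangle$) and they all hold. What your route buys is that it bypasses Proposition \ref{mix}, the most technical tool in the section, at the cost of the extra colon-ideal bookkeeping; the paper's route keeps the ideal $(x_t)$ principal but pays for it with the reduction modulo $x_t$ and the tensor-product analysis.
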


\begin{proof}
Observe first   $x_t\m^2=x_t^2\m+x_t(x_s^2)=0$. We further have
$$
\m^3=x_t\m^2+x_s\m^2=x_s\m^2=x_s\left(x_t\m+(x_s^2)\right)=(x_s^3)\,.$$
An induction argument yields
\begin{equation}
\label{x2} \m^i=(x_s^i)\quad\text{for all}\quad i>2\,.
\end{equation}
 Since $R$ is not artinian, we have $x_s^{i}\ne 0$ for all $i$.
It follows that  $\ann(x_s)\cap \m^3=0$.

Let $a\in \ann(x_s)\cap\m^2$. We have  $a=x_ty+\beta x_s^2$ with
$y\in \m$ and $\beta \in R$.  Then  $ax_s=\beta x_s^3$, hence
$\beta x_s^3=0$. Since $x_s^3\ne 0$, it follows that $\beta\in \m$. Since $\ann(x_s)\cap \m^3=0$, we conclude $\beta x_s^2=0$. We have thus $a=x_ty\in
x_t\m$. We proved thus the inclusion  $\ann(x_s)\cap \m^2\subseteq
(x_t)\m$.

Consider the DG-$\Gamma$ algebras
$$
\mathcal E=R\langle X_t, \Lambda \rangle\quad\text{and}\quad \mathcal
A=\mathcal K^t
$$
where $\partial(\Lambda)=x_tX_t$. We apply Lemma \ref{ann-connect}(b) with
$\b=(x_t)$ and $\a=(x_s)$  to conclude that $\nu(\ov x_s\ov{\mathcal
A})=0$, where overbars denote the corresponding classes modulo
$(x_t)$. Note that $\mathcal D$ can be identified with $\mathcal
E\otimes_R\mathcal A$. We have $\m^2=x_t\m+(x_s)\m$ and
$x_t^2=0=x_t(x_s)$. We apply Proposition \ref{mix} with $\a=(x_s)$
to conclude  $\nu(\m\mathcal D)=0$.
\end{proof}

\begin{proposition}
\label{newp}
Assume that the following conditions  hold:
\begin{enumerate}[\quad\rm(1)]
\item $\m^2=x_s\m$ and $x_sx_t=0$;
\item $ \ann(x_s)\cap \m^2\subseteq x_t\m$;
\item $x_i\m\subseteq x_t\m$ for all $i\neq s$.
\end{enumerate}
Then $\nu(\m \mathcal D)=0$, where $\mathcal D=\mathcal K\langle \Lambda\mid \partial(\Lambda)=x_tX_s\rangle$.
\end{proposition}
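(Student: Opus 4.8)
The plan is to argue as in Propositions~\ref{useful} and \ref{nonA-use}: first extract the ring-theoretic content of (1)--(3), then use Lemma~\ref{ann-connect} to reduce $\nu(\m\mathcal D)=0$ to a statement supported on a small ideal, and finally prove that residual statement by a direct computation inside $\mathcal D$ in which condition~(3) does the real work.

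First I would record the consequences of the hypotheses. From $\m^2=x_s\m$ and $x_sx_t=0$ we get $x_t\m^2=x_t(x_s\m)=0$, hence $\m^2\subseteq\ann(x_t)$ and $x_t\m\subseteq\socle(R)$; also $x_s\in\ann(x_t)$. Rewriting each product $x_sx_i$ with $i\neq s$ as an element of $x_i\m\subseteq x_t\m$ (condition~(3)) gives $\m^2=x_t\m+(x_s^2)$, and from here $x_t^2\m=0$ follows after a short manipulation. Now $\mathcal D=\mathcal K\langle\Lambda\rangle$ is a minimal semi-free $\Gamma$-extension of $R\langle X_s\mid\partial(X_s)=x_s\rangle$ (adjoin the $X_i$ with $i\neq s$, then $\Lambda$), so Lemma~\ref{ann-connect} applies with $\a=(x_s)$ and $\b=(x_t)$: its hypotheses read $\b\m^2=x_t\m^2=0$, $\m^2=\b\m+x_s\a=x_t\m+(x_s^2)$, and $\ann(x_s)\cap\m^2\subseteq x_t\m=\b\m$, the last being exactly~(2). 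Unwinding the proof of part~(a), a cycle $k\in\m^2\mathcal D$ can be written $k=k'+\partial(X_sk'')+X_s\partial(k'')$, where $k''\in(x_s)\mathcal D$ so that $X_sk''\in\m\mathcal D$ and $\partial(X_sk'')\in B(\m\mathcal D)$, where $k'\in x_t\m\mathcal D$, and where $\partial(k'')\in(\ann(x_s)\cap\m^2)\mathcal D\subseteq x_t\m\mathcal D$ so that $X_s\partial(k'')\in x_t\m\mathcal D$ as well. Thus $k\equiv k_1:=k'+X_s\partial(k'')$ modulo $B(\m\mathcal D)$, and $k_1$ is a cycle lying in $x_t\m\mathcal D$. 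So $\nu(\m\mathcal D)=0$ comes down to: every such $k_1\in x_t\m\mathcal D$ is a boundary of $\m\mathcal D$.

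Proving this is the crux, and it is where (3) is indispensable — note that $x_t\m\mathcal D$ is \emph{not} contained in $B(\m\mathcal D)$ in general, so one must genuinely use that $k_1$ arose from a cycle. The strategy is to peel $k_1$ apart using the two identities $x_sv=\partial(X_sv)+X_s\partial(v)$ and $x_tX_s\xi=\partial(\Lambda\xi)-\Lambda\partial(\xi)$ (the second available because $\partial(\Lambda)=x_tX_s$), invoking $\m^2=x_s\m$ to keep pushing the remaining piece back into the form $x_s(\text{element of }\m\mathcal D)$, and invoking (3) in the form $x_i\m\mathcal D\subseteq x_t\m\mathcal D$ for $i\neq s$ to ensure that the Koszul-type error terms these moves create never leave $x_t\m\mathcal D$ and always return to the same shape, every such term being a cycle since $x_t\m^2=0$. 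The obstacle — and the only genuinely delicate point — is that these moves preserve homological degree, so the recursion cannot be closed by an induction on degree alone; it has to be controlled by a finer invariant, for instance a filtration on $\mathcal D$ by the number of exterior and divided-power generators occurring in a monomial (bounded in each fixed degree), and one must check that the process terminates with $k_1$ exhibited in $B(\m\mathcal D)$. Granting that, $Z(\m^2\mathcal D)\subseteq B(\m\mathcal D)$, i.e.\ $\nu(\m\mathcal D)=0$.
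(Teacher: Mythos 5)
Your reduction is exactly the paper's: the hypotheses give $x_t\m^2=0$ and $\m^2=x_t\m+(x_s^2)$, and Lemma \ref{ann-connect} with $\a=(x_s)$, $\b=(x_t)$ reduces everything to showing that $x_t\m\mathcal D$ lands in $B(\m\mathcal D)$. (One small correction: your worry that ``$x_t\m\mathcal D$ is not contained in $B(\m\mathcal D)$ in general, so one must genuinely use that $k_1$ arose from a cycle'' is unfounded here --- since $x_t\m\cdot\m=0$ and $\mathcal D$ is minimal, \emph{every} element of $x_t\m\mathcal D$ is a cycle, and the paper proves the unconditional inclusion $x_t\m\mathcal D\subseteq B(\m\mathcal D)$. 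No memory of where $k_1$ came from is needed.)

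The genuine gap is that you stop at precisely the point where the proof actually happens: you describe the two Leibniz moves, correctly identify that they preserve homological degree so that a naive induction fails, propose ``some finer invariant,'' and then write ``Granting that.'' The paper closes this by making the recursion explicit. Split $x_t\m\mathcal K_n\Lambda^{(m)}$ according to $\mathcal K_n=(\mathcal K^s)_n\oplus X_s(\mathcal K^s)_{n-1}$. On the $X_s$-part, $x_tak'X_s\Lambda^{(m)}=\partial(ak'\Lambda^{(m+1)})\pm a\partial(k')\Lambda^{(m+1)}$, and since $\partial(k')\in\sum_{i\ne s}x_i\mathcal K^s$, hypothesis (3) forces the error term into $x_t\m(\mathcal K^s)_{n-2}\Lambda^{(m+1)}$. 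On the complement, write $x_ta=x_sb$ (hypothesis (1)) and use $x_sbk''\Lambda^{(m)}=\partial(bX_sk''\Lambda^{(m)})\pm bX_s\partial(k'')\Lambda^{(m)}$, with the error term again in $x_t\m X_s(\mathcal K^s)_{n-1}\Lambda^{(m)}$ by (3). Together these give
\begin{equation*}
x_t\m\mathcal K_n\Lambda^{(m)}\subseteq B(\m\mathcal D)+x_t\m\mathcal K_{n-2}\Lambda^{(m+1)}\,,
\end{equation*}
and the terminating invariant is simply the Koszul degree $n$, which drops by $2$ at each step and is bounded below (the base cases $n\le 1$ are immediate); the divided-power exponent $m$ is allowed to grow. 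Your proposed filtration ``by the number of exterior and divided-power generators'' is ambiguous on this point --- the divided-power count \emph{increases} along the recursion, so it must be excluded from (or weighted oppositely in) the invariant. Without the displayed double-step inclusion and the identification of $n$ as the decreasing quantity, the argument is not complete; with them, it coincides with the paper's proof.
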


\begin{proof}
Using the hypotheses (1) and (3)  we get $x_t\m^2=0$ and  $\m^2=x_t\m+(x_s^2)$.
In view of (2) as well, the hypotheses of Lemma \ref{ann-connect}  hold with $\a=(x_s)$ and $\b=(x_t)$ and we have
$Z(\m^2{\mathcal D})\subseteq B(x_s{\mathcal D})+x_t\m\mathcal D$.
It suffices thus to show $x_t\m\mathcal D\subseteq B(\m{\mathcal D})$.

Let $n\ge -1$ and $\m\ge 0$. Let $k'\in (\mathcal K^s)_{n-1}$ and $a\in \m$. The Leibnitz rule gives
$$x_tak'X_s\Lambda^{(m)}=\partial(ak'\Lambda^{(m+1)})\pm a\partial(k')\Lambda^{(m+1)}\,.$$
 Since $\partial(k')\in \sum_{i\ne s} x_i\mathcal K^s$, hypothesis (3) gives $a\partial(k')\in x_t\m(\mathcal K^s)_{n-2}$ and thus
\begin{equation}
\label{new1}
x_t\m X_s(\mathcal K^s)_{n-1}\Lambda^{(m)}\subseteq B(\m\mathcal D)+x_t\m (\mathcal K^s)_{n-2}\Lambda^{(m+1)}\,.
\end{equation}
In view of (1), there exists $b\in \m$ such that $x_ta=x_sb$. If $k''\in (\mathcal K^s)_n$, we have
 $$x_tak''\Lambda^{(m)}=x_sbk''\Lambda^{(m)}=\partial(bX_sk''\Lambda^{(m)})\pm bX_s\partial(k'')\Lambda^{(m)}\,.$$
As above, hypothesis (3) gives  $b\partial(k'')\in x_t\m(\mathcal K^s)_{n-1}$, hence
\begin{equation}
\label{new2}
x_t\m(\mathcal K^s)_{n}\Lambda^{(m)}\subseteq B(\m\mathcal D)+x_t\m X_s(\mathcal K^s)_{n-1}\Lambda^{(m)}\,.
\end{equation}
Since $x_t\m\mathcal K_{n}=x_t\m(\mathcal K^s)_{n}+x_t\m X_s(\mathcal K^s)_{n-1}$, equations \eqref{new1} and \eqref{new2} give
$$
x_t\m \mathcal K_{n}\Lambda^{(m)}\subseteq B(\m\mathcal D)+x_t\m \mathcal K_{n-2}\Lambda^{(m+1)}\,.
$$
Induction on $n$ yields then the inclusion $x_t\m\mathcal K_n \Lambda^{(m)}\subseteq  B(\m\mathcal D)$ for all $n$ and $m$. (For the base case, note that the inclusion holds trivially for all $m$ when $n\le -1$.) We have thus $x_t\m\mathcal D\subseteq B(\m\mathcal D)$ and this finishes the proof.
\end{proof}

\begin{corollary}
\label{special-case}  Assume that $R$ is graded, and either $R$ is not artinian or $R$ is quadratic. Assume further that
$$x_1^2=x_1x_2=x_2^2-x_1x_3=x_2x_3=x_i\m=0\quad\text{for all} \quad i\ge 4\,.
$$

 Then  $\nu(\m\mathcal D)=0$, where $\mathcal D=\mathcal K\langle
\Lambda\mid \partial(\Lambda)=x_2X_3\rangle$.
\end{corollary}

\begin{proof}
Set $s=3$ and $t=2$. We check that the hypotheses of  Proposition \ref{newp} are satisfied.

We have $\m^2=(x_2^2,x_3^2)=(x_1x_3,x_3^2)=x_3\m$, and this implies
\begin{equation}
\label{mi}
\m^i=x_3^{i-2}\m^2=x_3^{i-2}(x_2^2,x_3^2)=(x_3^i)\quad \text{for all}\quad i\ge 3\,.
\end{equation}

Since $\m^2=x_3\m$ and $x_2x_3=0$, hypothesis (1) of Proposition \ref{newp} is satisfied.

Since $x_1\m=(x_1x_3)$, $x_2\m=(x_2^2)=(x_1x_3)$, and $x_i\m=0$ for all $i\ge 4$, we see that hypothesis (3) is also satisfied.

It remains to verify hypothesis (2), which reads
$$
\ann(x_3)\cap \m^2\subseteq x_2\m\,.
$$

First, assume that $R$ is not artinian. In particular, equation \eqref{mi} shows that $x_3^i\ne 0$ for all $i$. Take a homogeneous element $u\in \ann(x_3)\cap \m^2$. If $\deg u=i\ge 3$, then $u=\alpha x_3^i$, with $\alpha \in \kk$. As $0=ux_3=\alpha x_3^{i+1}$, we get $\alpha=0$, and hence $u=0$. If $\deg u=2$, we write $u=\alpha x_3^2+\beta x_2^2$ with $\alpha,\beta \in \kk$. We have then $x_3u=\alpha x_3^3$, so $\alpha=0$ and hence $u=\beta x_2^2\in x_2\m$, as desired.

Assume now that $R$ is quadratic. In view of the previous case, we may assume $R$ is artinian. Set $\ov R=R/(x_1, x_2, x_4, \dots, x_e)$.  The ring $\ov R$ can be presented as $\ov R=\kk[\wt x_3]/J$, where $\wt x_3$ is a polynomial variable that corresponds to the image of $x_3$ in $\ov R$, and $J$ is a quadratic ideal. Since $R$ is artinian, we must have $J=(\wt x_3^2)$, and hence
$x_3^2\in (x_1, x_2, x_4, \dots, x_e)\m$. Since $(x_1, x_2, x_4, \dots, x_e)\m=(x_1,x_2)\m=x_2\m$, we conclude $x_3^2\in x_2\m$, and then $\m^2=x_2\m$. This proves hypothesis (2), as well.
\end{proof}

\begin{corollary}
\label{last-one} Assume that $R$ is graded, and either $R$ is not artinian or $R$ is quadratic. Assume further that $e=4$ and
$$x_1^2=x_1x_4=x_1x_2=x_2x_4=x_2x_3=x_2^2-x_3x_4=x_4^2-x_1x_3=0\,.$$
 Then  $\nu(\m\mathcal D)=0$, where $\mathcal D=\mathcal K\langle
\Upsilon\mid \partial(\Upsilon)=x_2X_2-x_4X_3\rangle$.
\end{corollary}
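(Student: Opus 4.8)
The plan is to apply Lemma~\ref{ann-connect} with $s=3$, $\a=(x_3)$ and $\b=(x_4)$, and then to finish by a direct computation inside the complex $\mathcal D$. First I would extract from the seven relations the structural facts that are needed. Using $x_1x_i=0$ for $i\neq 3$, $x_2x_j=0$ for $j\neq 2$, $x_2x_4=0$ and Nakayama's lemma one gets $x_1\m^2=x_2\m^2=x_4\m^2=0$, as well as $x_1\m=\kk x_1x_3$, $x_2\m=\kk x_2^2$ and $x_4\m=\kk x_1x_3+\kk x_2^2$ (here $x_4^2=x_1x_3$ and $x_2^2=x_3x_4$ are used). Since $R_2$ is spanned by $x_1x_3$, $x_2^2$, $x_3^2$ and $\m^3\subseteq(x_3^2)$, one also obtains $\m^2=x_4\m+x_3(x_3)$. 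Finally $\ann(x_3)\cap\m^2\subseteq x_4\m$: writing $a\in\m^2$ with $x_3a=0$ as $a=y+z$ with $y\in x_4\m$ and $z\in(x_3^2)$ gives $x_3y\in x_2^2\m=0$, hence $x_3z=0$, and multiplication by $x_3$ being injective on $(x_3^2)=\bigoplus_{k\geq 2}\kk x_3^k$ forces $z=0$. Also $\partial(x_2X_2-x_4X_3)=x_2^2-x_3x_4=0$, so $\mathcal D$ is well defined and is a minimal semi-free $\Gamma$-extension of $R\langle X_3\mid\partial(X_3)=x_3\rangle$.

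Granting these facts, the hypotheses of Lemma~\ref{ann-connect} hold with $s=3$, $\a=(x_3)$, $\b=(x_4)$: condition (1) is exactly $x_4\m^2=0$ and $\m^2=x_4\m+x_3(x_3)$, and condition (2) is $\ann(x_3)\cap\m^2\subseteq x_4\m$. Part (a) of that lemma then gives
$$
Z(\m^2\mathcal D)\subseteq B((x_3)\mathcal D)+x_4\m\mathcal D .
$$
Since $x_3\mathcal D\subseteq\m\mathcal D$ we have $B((x_3)\mathcal D)=\partial(x_3\mathcal D)\subseteq B(\m\mathcal D)$, so everything reduces to proving $x_4\m\mathcal D\subseteq B(\m\mathcal D)$. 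As $x_4\m\mathcal D=x_1x_3\mathcal D+x_3x_4\mathcal D$, it suffices to show that $x_1x_3u$ and $x_3x_4u$ lie in $B(\m\mathcal D)$ for every element $u=X_S\Upsilon^{(m)}$ of a fixed $R$-basis of $\mathcal D$.

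This last step is the main obstacle; it is where the precise shape $\partial(\Upsilon)=x_2X_2-x_4X_3$ and the van
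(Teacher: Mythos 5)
Your proposal is incomplete: it breaks off at exactly the point you yourself identify as ``the main obstacle,'' namely the inclusion $x_4\m\mathcal D\subseteq B(\m\mathcal D)$, and no argument for that inclusion is supplied. The preparatory material is sound: the structural identities you list ($x_4\m^2=0$, $x_4\m=\kk x_1x_3+\kk x_2^2$, $\m^2=x_4\m+x_3(x_3)$, $\ann(x_3)\cap\m^2\subseteq x_4\m$) do follow from the stated relations, granted --- as you implicitly assume --- that these are the defining relations, so that $x_3$ is not nilpotent; and Lemma \ref{ann-connect}(a) with $s=3$, $\a=(x_3)$, $\b=(x_4)$ then correctly reduces the claim to $x_4\m\mathcal D\subseteq B(\m\mathcal D)$. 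But that containment is where all the substance lies. It is not formal: Proposition \ref{1} cannot be invoked with $\b=(x_4)$, since $x_4\m=(x_4^2,\,x_3x_4)$ is two--dimensional and hence not of the form $x_t(x_4)$ for a single $x_t$, which is what that proposition requires. One would instead have to run a bare-hands induction on Koszul degree, splitting basis elements along $X_2$ and $X_3$ and exploiting $\partial(\Upsilon)=x_2X_2-x_4X_3$ together with $x_2x_4=0$ and $X_3^2=0$; this is plausible but absent, so as written there is a genuine gap.

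For comparison, the paper's proof avoids this computation entirely by splitting along $x_2$ rather than $x_4$. It first applies Proposition \ref{1} with $\b=(x_2)$, $x_s=x_t=x_2$ and $K=x_4X_3$ --- which is legitimate because $\m(x_2)=(x_2^2)=x_2(x_2)$ and $x_2^2=x_3x_4\in\ann(x_2)\m$ --- obtaining $\nu(x_2\mathcal D)=0$ and $x_2\m^2=0$. It then passes to $\ov R=R/(x_2)$, where the residual relations are exactly those of Corollary \ref{special-case}; that corollary together with Levin's lemma \ref{Levin} yields $\nu(\ov\m\,\ov{\mathcal D})=0$, and \eqref{implication} combines the two pieces via $Z(\m^2\mathcal D)\subseteq B(\m\mathcal D)+x_2\m\mathcal D=B(\m\mathcal D)+Z(x_2\m\mathcal D)\subseteq B(\m\mathcal D)$. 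If you wish to salvage your route, the cleanest repair is to redo your reduction with $\b=(x_2)$, where Proposition \ref{1} does apply; otherwise you must actually write out the induction establishing $x_4\m\mathcal D\subseteq B(\m\mathcal D)$.
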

\begin{proof}   First, note that  $x_2^3=x_2x_3x_4=0$, hence, if $R/(x_2)$ is artinian, then $R$ is artinian. In view of the hypothesis, we conclude that $R/(x_2)$ is not artinian or $R/(x_2)$ is quadratic. 

Note that $\m(x_2)=(x_2^2)=x_2(x_2)$, $x_2^2=x_3x_4$ and
$x_4\in\ann(x_2)$.  We use Proposition \ref{1} with $\b=(x_2)$,
$x_s=x_t=x_2$ and $K=x_4X_3$ to see  $\nu(x_2\mathcal D)=0$ and
$x_2\m^2=0$.

Consider the ring $\ov R=R/(x_2)$ and let overbars denote the
corresponding objects modulo $(x_2)$. We have $\ov \m^2=(\ov x_1\ov
x_3, \ov x_3^2)$ and the relations
${\ov x_1}^2=\ov x_1\ov x_4=\ov x_3\ov x_4={\ov x_4}^2-\ov x_1\ov x_3=0$.
Corollary \ref{special-case} gives $\nu(\overline {\m}\mathcal
A)=0$, where $\mathcal A=\ov{\mathcal K^2}\langle \Upsilon'\mid
\partial(\Upsilon')=\ov x_4\ov X_3\rangle$.

Note $\ov{\mathcal D}=\mathcal A\langle X \rangle$ with $X=\ov X_2$ and $\partial(X)=0$. Since $\nu(\overline
{\m}\mathcal A)=0$ we see $\nu(\ov \m\ov{\mathcal D})=0$ by
\ref{Levin}. We also use \eqref{implication} and the facts
$x_2\m^2=0$ and $\nu(x_2\mathcal D)=0$ to conclude
$$
Z(\m^2\mathcal D)\subseteq B(\m\mathcal D)+x_2\m\mathcal
D=B(\m\mathcal D)+Z(x_2\m\mathcal D)\subseteq B(\m\mathcal D)\,,
$$
hence $\nu(\m\mathcal D)=0$.
 \end{proof}

\section{Artinian quadratic $\kk$-algebras with  $\dim_{\kk}R_2\le 3$}
\label{artinian}

Let $\kk$ denote a field, and consider the polynomial ring
$Q=\kk[\wt x_1, \dots, \wt x_e]$ with variables $\wt x_i$ in degree
$1$. Throughout this section, $R$ denotes a standard graded
quadratic $\kk$-algebra with $R=Q/I$ for a homogenous ideal $I$
generated by quadrics.

For each $i$ we let $x_i$ denote the image of
$\wt x_i$ in $R$ and we let $\m$ denote the maximal homogeneous
ideal of $R$. In general, if $a\in R$, then $\wt a$ denotes a preimage of $a$ in $Q$; unless otherwise indicated (such as in the case of $\wt x_i$) the choice of this preimage is not restricted.  If $x\in R_1$, we denote by $\rank(x)$ the $\kk$-vector space
dimension of the image of the map $R_1\to R_2$ given by
multiplication by $x$.

In this section we prove:

\begin{theorem}
\label{artinian-case} Let $R$ be a standard graded artinian quadratic
$\kk$-algebra with $\dim_{\kk}R_2\le 3$. Assume that $\kk$ is algebraically closed.

The  following then hold:
\begin{enumerate}[\quad\rm(a)]
\item $R_4=0$ and $\dim_{\kk} R_3\le 1$. Furthermore, $\dim_{\kk}R_3=1$ if and only if $R$ is a trivial fiber extension of a quadratic complete intersection of embedding dimension $3$;
\item  $R$ is Koszul and there exists a surjective Golod homomorphism $P\to R$ of graded  $\kk$-algebras, with $P$ a quadratic complete intersection of codimension at most $3$. In particular, $R$ is absolutely Koszul.
\end{enumerate}
\end{theorem}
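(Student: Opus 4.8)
The plan is to obtain (a) from a Hilbert-function analysis, and then to prove (b) by reducing to the case with no socle elements in degree~$1$ and treating it case by case, in each case building a short Tate complex and invoking Corollary~\ref{tool}.

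For (a): a trivial fiber extension leaves $R_j$ unchanged for $j\ge 2$, and a quadratic complete intersection of embedding dimension~$3$ has Hilbert series $(1+t)^3$, so it suffices to establish the Hilbert-function statements (and the complete intersection description) when $R$ has no socle elements in degree~$1$. Since $I$ is generated by quadrics one has $R_3\cong Q_3/Q_1I_2$, so Macaulay's bound together with Gotzmann persistence (which would otherwise force $\dim_{\kk}R_n$ to stabilize at a nonzero value, contradicting that $R$ is Artinian) already limits $\dim_{\kk}R_3$; sharpening this to $\dim_{\kk}R_3\le 1$ is the crux of (a) and requires analyzing the multiplication maps $R_1\times R_2\to R_3$ in the spirit of \cite{C2}. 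Granting $\dim_{\kk}R_3\le 1$: if $\dim_{\kk}R_3=0$ then $\m^3=0$ and hence $R_4=0$; if $\dim_{\kk}R_3=1$ one checks, using again that $I$ is quadratic and that $R$ has no degree-one socle, that $R$ is the complete intersection of embedding dimension~$3$, whose Hilbert series $(1+t)^3$ gives $R_4=0$.

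For (b): by \ref{reduction-prelim} and \ref{reduction-prelim-Koszul} it is enough to produce, for $R$ with no socle elements in degree~$1$, a surjective Golod homomorphism $\varphi\colon P\to R$ with $P$ a quadratic complete intersection of codimension at most~$3$ and with $R$ Koszul; absolute Koszulness is then automatic by \ref{Golod-ci} (as also recorded in Corollary~\ref{tool}). I would split on $\dim_{\kk}R_2\in\{0,1,2,3\}$, and inside the large case $\dim_{\kk}R_2=3$ subdivide further according to the ranks of the linear forms, whether some linear form squares to zero, and whether $R$ is itself a complete intersection. The case $\dim_{\kk}R_2\le 1$ is quick: then $\m^3=0$, and $R$ is either $\kk$, or a quadratic complete intersection of embedding dimension~$2$ (take $P=R$), or --- after reducing to the socle-free case --- a ring carrying linear forms $x_s,x_t$ with $x_sx_t=0$ and $x_s\m=x_t\m=\m^2$, so Corollary~\ref{apply-st} applies with a codimension-one $P$. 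Likewise, when $R$ is a quadratic complete intersection of codimension at most~$3$ one simply takes $P=R$ with $\varphi$ the identity. In every remaining configuration one writes down an explicit short Tate complex $\mathcal D=\mathcal K\langle\,\cdots\,\rangle$ attached to a candidate quadratic complete intersection $P=Q/(f_1,\dots,f_d)$ with $d\le 3$, and verifies the hypotheses of one of Corollary~\ref{apply-st} (e.g.\ a Conca generator), Corollary~\ref{use-Golod}, Proposition~\ref{newp}, Corollary~\ref{special-case}, or Corollary~\ref{last-one} to obtain $\nu(\m\mathcal D)=0$; Corollary~\ref{tool} then delivers simultaneously that $\varphi$ is Golod, that $R$ is Koszul, and that $R$ is absolutely Koszul.

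The hard part is the case $\dim_{\kk}R_2=3$ with $\m^3=0$: here the multiplicative structure on $R_1$ is genuinely varied, and for each configuration one must pin down a regular sequence of quadrics whose associated short Tate complex matches the hypotheses of one of the results of Section~\ref{key}; the characteristic~$2$ subcases are the most delicate, since the normal forms for the squaring map on $R_1$ degenerate and devices such as ``choose a linear form with nonzero square'' are no longer available.
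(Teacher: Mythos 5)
Your overall strategy coincides with the paper's: reduce to the socle-free case via \ref{reduction-prelim}, classify the possible multiplicative structures on $R_1$, and in each configuration exhibit a short Tate complex $\mathcal D$ with $\nu(\m\mathcal D)=0$ so that Corollary~\ref{tool} yields the Golod map, Koszulness, and absolute Koszulness at once. However, as written the proposal has a genuine gap: the two pieces of work that constitute essentially the entire proof are deferred rather than carried out. First, you never establish \emph{which} configurations actually occur. The paper does this in Proposition~\ref{codim1-2structure}, showing that (after passing to $\ov\kk$ and the socle-free case) one of the eight explicit conditions \ref{conditions}(1)--(8) holds or $R$ is a trivial fiber extension of a codimension-$3$ complete intersection; this classification, especially its characteristic-$2$ branch (Cases 2.1--2.2.4), is the bulk of Section~\ref{artinian} and cannot be waved at as ``subdivide according to ranks and null-square forms.'' Without it there is no list of cases in which to verify the hypotheses of Corollary~\ref{apply-st}, Proposition~\ref{1}, Proposition~\ref{mix}, or Proposition~\ref{reduction-proposition}. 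Second, you explicitly concede that the bound $\dim_{\kk}R_3\le 1$ ``is the crux of (a) and requires analyzing the multiplication maps,'' i.e.\ you do not prove it. The Macaulay/Gotzmann route you sketch does not close this: Macaulay's bound from $\dim_{\kk}R_2=3$ only gives $\dim_{\kk}R_3\le 4$, and persistence rules out at most the extremal value, so no purely numerical argument reaches $\le 1$.

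It is also worth noting that the paper obtains part (a) not by a separate Hilbert-function argument but as a byproduct of the case analysis for (b): in each of the cases (1)--(8) it verifies $\m^3=0$ directly (e.g.\ $\m^3=x_1\m^2=x_1^2\m=0$ in case (1), and analogous computations using $x_1\m^2=0$ in the others), and the only remaining possibility is the trivial fiber extension of the embedding-dimension-$3$ complete intersection, where $H_R(t)=(1+t)^3$ plus the extension gives $\dim_{\kk}R_3=1$ and $R_4=0$. So if you complete the structural classification and the case-by-case verification for (b), part (a) comes for free; conversely, neither part can be finished from what you have written.
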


The  proof of this result is given at the end of the section.  We
establish first needed structural results. We follow some of the
techniques in \cite{C1} and \cite{C2} and we develop new
considerations in order to handle the case $\ch\kk=2$. The treatment of the case $\ch\kk=2$ constitutes roughly half of the proof.

\begin{chunk}
\label{conditions} We consider the following conditions on the
minimal generating set $x_1, \dots, x_e$ of $\m$:
\begin{enumerate}[\quad\rm(1)]
\item $\m^2=x_1\m$ and $x_1^2=0$;
\item $\m^2=x_1\m+x_2\m$, $x_1^2=0$, $x_2^2\in x_1\m$ and  $x_1\m=(x_1x_j)$, where $j=2$ if $x_1x_2\ne 0$ and $j=3$  if $x_1x_2=0$;
\item $\m^2=x_1\m+x_2(x_3)$, $x_1^2=0=x_1x_2$ and $x_2^2\in x_1\m$;
\item $\m^2=x_1\m+x_2(x_3)$, $x_1^2=0=x_1x_3$ and $x_2^2\in x_1\m$;
\item $\m^2=x_1\m+x_2(x_3)$, $x_1^2=0=x_1x_4$, $x_2^2\in x_1\m, x_2x_4\in x_1\m$ and $x_4^2-x_2x_3\in x_1\m$;
 \item $\m^2=x_1\m+x_2\m$, $x_1^2=0=x_1x_3$, $x_2^2\in x_1\m$,  $x_3\m=(x_3^2)$,  $x_1\m=(x_1x_j, x_3^2)$ where $j=4$ if  $x_1x_2\in x_3\m$ and $j=2$ if $x_1x_2\notin x_3\m$;
\item $\m^2=x_1\m+x_2\m+x_3\m$,  $x_1^2=0$, $x_2^2\in x_1\m$, $x_3^2\in (x_1, x_2)\m$, $x_1\m=(x_1x_i)$ for some $i\ne 1$ and $(x_1,x_2)\m= (x_1x_i, x_2x_j)$ with $x_2x_j\notin (x_1x_i)$ for some  $j\notin\{1,2\}$;
\item $\m^2=x_1\m=x_2\m$ and  $x_1x_2=0$.
\end{enumerate}
\end{chunk}

\begin{proposition}\label{codim1-2structure} Let $R$ be a standard graded quadratic $\kk$-algebra. Assume $R$ is artinian and $\kk$ is algebraically closed.

The  following hold:
\begin{enumerate}[\quad\rm(a)]
\item If $\dim_{\kk}R_2=1$, then {\rm \ref{conditions}(1)} holds, up to a change of variables.
\item If  $\dim_{\kk}R_2=2$ then {\rm\ref{conditions}(1)} or {\rm\ref{conditions}(2)} holds, up to a change of variables.
\item If $\dim_{\kk}R_2=3$ then either one of the conditions {\rm\ref{conditions}(1)--(8)} holds, up to a change of variables, or else $R$ is a trivial fiber extension of a complete intersection of embedding dimension $3$.
\end{enumerate}
\end{proposition}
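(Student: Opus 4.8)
The plan is to argue by cases on $c:=\dim_{\kk}R_2\in\{1,2,3\}$, inducting through the quotients $R/(x)$ by well-chosen linear forms $x$, and to use the hypothesis that $\kk$ is algebraically closed in two recurring ways: every scalar is a square, so an equation $v^2=q$ with $v$ ranging over a subspace $V\subseteq R_1$ is solvable as soon as the square map does not vanish on $V$; and a symmetric bilinear (or quadratic) form of rank $\ge 2$ has a nonzero isotropic vector. The eight shapes in \ref{conditions} will then correspond, after a coordinate normalization, to the possible ranks of a normalized pair $x_1,x_2$ of generators together with the vanishing pattern of the products $x_ix_j$, while the complementary possibility is forced to be a trivial fiber extension of a quadratic complete intersection of embedding dimension $3$.

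Case $c=1$. Here multiplication is a single symmetric bilinear form $b\colon R_1\times R_1\to R_2\cong\kk$ whose radical is exactly $\{v\in R_1\mid v\m=0\}$. First I would show its rank is $\ge 2$: a rank-one symmetric bilinear form is $b(u,v)=\ell(u)\ell(v)$, and choosing coordinates so that $\ell$ is dual to $x_1$ turns $R=Q/(I_2)$ into $\kk[\tilde x_1]$ extended by a socle, contradicting that $R$ is Artinian. (When $\ch\kk=2$: if $b$ is alternating its rank is automatically even; if $b$ is non-alternating then $v\mapsto v^2$ is additive and $\kk$-semilinear, so it has a hyperplane kernel, which strictly contains $\operatorname{rad}(b)$ unless the rank is one.) Granting this, one picks $x_1\notin\operatorname{rad}(b)$ with $x_1^2=0$; then $x_1R_1=R_2$, so $R_3=R_1R_2=x_1R_2=x_1^2R_1=0$, hence $\m^3=0$ and $\m^2=x_1\m$, and completing $x_1$ to a minimal generating set of $\m$ gives \ref{conditions}(1).

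Cases $c=2$ and $c=3$. Choose $x_1\in R_1$ of maximal rank $r=\rank(x_1)\le c$; after a linear change of variables $\tilde x_1$ is a variable of $Q$, so $\overline R:=R/(x_1)$ is a standard graded quadratic Artinian $\kk$-algebra with $\dim_{\kk}\overline R_2=c-r$, whose structure is known from the already-treated smaller values of $c$ (or is trivial when $c=r$, where $\overline{\m}^2=0$). The work is to lift a normalized minimal generating set of $\overline{\m}$ back along $R\to\overline R$ and to control the correction terms, which lie in $x_1\m$. Two adjustments recur: replacing $x_1$ by $x_1+v$ with $v\in\ann_{R_1}(x_1)$ to attempt to force $x_1^2=0$ — possible precisely when $v\mapsto v^2$ is nonzero on $\ann_{R_1}(x_1)$, and otherwise one only obtains $x_1^2\in x_1\m$ — and, after $x_1$ is fixed, selecting a second generator $x_2$ with $\m^2=x_1\m+x_2\m$ and branching on whether $x_1x_2=0$, on whether $x_2$ contributes only a single new product (which yields $\m^2=x_1\m+x_2(x_3)$, i.e. \ref{conditions}(4),(5)), and on whether $x_2^2$ can be pushed into $x_1\m$. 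Running through these branches, together with the sub-branch $\m^2=x_1\m=x_2\m$, $x_1x_2=0$ (two generators of full rank $c$ with zero product, which no further change of variables simplifies), reproduces the list \ref{conditions}(1)--(8). The sole remaining configuration — where no rank-$c$ form absorbs enough of $\m^2$ and the leftover relations cannot be normalized — is pinned down by a dimension count: it forces $\m^2=x_1\m+x_2\m+x_3\m$ with $x_1,x_2,x_3$ generating a quadratic complete intersection and every other minimal generator of $\m$ annihilating $\m$, so that $R$ is a trivial fiber extension of a quadratic complete intersection of embedding dimension $3$; since such a ring has $\dim_{\kk}(\,\cdot\,)_2=3$, this can occur only when $c=3$.

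The main obstacle is the case analysis for $c=3$: one must check that branching on $(\rank(x_1),\rank(x_2))$ and on the vanishing of products is exhaustive, and that every branch admits a linear change of variables bringing it to exactly one of the eight listed forms (or to the trivial-fiber-extension alternative). Overlaid on this is the characteristic-two analysis, where symmetric bilinear forms are not diagonalizable and $v\mapsto v^2$ is additive rather than quadratic, so the "perturb to kill a square" and "produce an isotropic vector" steps must be carried out directly; this is exactly the part not available from \cite{C1}, \cite{C2} and \cite{Alessio}, and it accounts for much of the length of the argument.
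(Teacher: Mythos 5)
Your high-level strategy is the right one and matches the paper's in outline: reduce modulo socle elements, use algebraic closedness to produce a null-square linear form (the paper's tool \ref{conca-element}: $\dim_{\kk}U>\dim_{\kk}U^2$ forces a nonzero $x\in U$ with $x^2=0$), pass to $R/(x)$ and lift, and treat $\ch\kk=2$ by hand. Part (a) as you describe it is essentially the paper's argument. But for parts (b) and especially (c) what you have written is a plan, not a proof, and the gap is exactly where all the content lies. The paper's proof of (c) occupies several pages: after fixing a null-square $x$ it branches on $\rank(x)\in\{1,2,3\}$, and the case $\rank(x)=2$ splits further according to whether $\ann(x)\cap R_1$ is contained in $W=\{r\in R_1\mid r\m\subseteq x\m\}$, with four more subcases in characteristic $2$, each requiring a chain of carefully justified changes of variables to land on one of \ref{conditions}(1)--(8). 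Your proposal compresses all of this into ``running through these branches reproduces the list'' and then concedes that checking exhaustiveness and normalizability ``is the main obstacle.'' Conditions (6), (7) and (8) in particular arise only from quite specific subcases, and nothing in your sketch indicates how your branching would reach them or why it terminates in exactly these eight shapes.

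Two more concrete problems. First, you anchor the analysis on an element $x_1$ of \emph{maximal} rank, whereas every one of the target normal forms (1)--(7) is built around an element with $x_1^2=0$; the maximal-rank element need not be null-square, and your proposed repair --- replace $x_1$ by $x_1+v$ with $v\in\ann_{R_1}(x_1)$, ``possible precisely when $v\mapsto v^2$ is nonzero on $\ann_{R_1}(x_1)$'' --- is not correct as stated: $(x_1+v)^2=x_1^2+v^2$, so you need $-x_1^2$ to lie in the \emph{image} of the squaring map on $\ann_{R_1}(x_1)$, which can fail even when that map is nonzero (its image need not be a subspace containing $x_1^2$ when $\dim_{\kk}R_2>1$). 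The paper avoids this by starting from a null-square element and letting its rank dictate the case. Second, your identification of the trivial-fiber-extension alternative by ``a dimension count'' is vaguer than necessary: after discarding degree-one socle elements, that alternative is precisely the case $e=3$, where $I$ is a height-$3$ ideal minimally generated by $3$ quadrics and hence a complete intersection --- a one-line argument, but one that has to be placed correctly in the case tree rather than appear as a leftover.
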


\begin{remark}
Using the terminology discussed after Corollary \ref{apply-st}, the
fact that \ref{conditions}(1) holds up to a change of variables is
equivalent to saying that $R$ has a Conca generator.

Assume the hypothesis of the proposition holds. If $\ch\kk\ne 2$,
then a statement stronger than (b) holds, namely:
\begin{enumerate}
\item[(b$'$)] If  $\dim_{\kk}R_2=2$ then $R$ has a Conca generator.
\end{enumerate}
This   statement can be deduced from the  proof of \cite[Proposition
6]{C1}. However (b$'$) does not hold when  $\ch\kk=2$.  Indeed,
assume $\ch\kk=2$ and let $$R=\kk[\wt x_1,\wt x_2,\wt  x_3]/ ({\wt
x_1}^2,\wt x_1 \wt x_2, {\wt x_3}^2-\wt x_1\wt x_3,{\wt x_2}^2-\wt
x_2\wt x_3)\,.$$ In this ring we have $\m^2=(x_2^2,x_3^2)$, and
$x_2^2, x_3^2$ are linearly independent.  Assume there exists $x\in
R_1$ non-zero such that $x^2=0$ and $\m^2=x\m$. Let $\alpha, \beta,
\gamma\in\kk$ so that  $x=\alpha x_1+\beta x_2+\gamma x_3$. Since
$x^2=0$
 and $\ch\kk=2$, we have  $\beta^2 x_2^2+\gamma^2 x_3^2=0$ and hence $\beta=\gamma=0$,  thus $x=\alpha x_1$.  Since
 $x_2^2\notin x_1\m$, this contradicts the assumption that $\m^2=x\m$.
\end{remark}

We will use (repeatedly) the following result recorded in \cite[Lemma
3]{C1}:

\begin{bfchunk}{Existence of null-square linear forms.}
\label{conca-element} Assume $\kk$ is algebraically closed and let
$U\subseteq R_1$  be a subspace. If $\dim_{\kk} U>\dim_{\kk} U^2$,
then there exists  $x\in U$ with $x\ne 0$ and such that $x^2=0$.
\end{bfchunk}

\noindent{\it Proof of Proposition {\rm \ref{codim1-2structure}}.}

If $R$ is a trivial fiber extension of an algebra $R'$, note that
the conclusions of the proposition do not change when replacing $R$
with $R'$. We may assume thus that $R$ has no socle elements in
degree $1$, hence $xR_1\ne 0$ for all $x\in R_1$ with $x\ne 0$.

 (a) Assume $\dim_{\kk}R_2=1$. Note that $e>1$. Using \ref{conca-element}, we see that there exists $x\in R_1$ non-zero with $x^2=0$. Since $xR_1\ne 0$, we have then $xR_1=R_2$ and thus $\m^2=x\m$. We can take $x_1=x$ after a change of variables and then \ref{conditions}(1) holds.

(b) Assume $\dim_{\kk}R_2=2$. Note that  $e>2$. Using again
\ref{conca-element}, we see that there exists $x\in R_1$ non-zero
with $x^2=0$. If $\rank(x)=2$, then \ref{conditions}(1) holds with $x_1=x$.

Assume $\rank(x)=1$. The  ring $\ov R=R/(x)$ is an artinian quadratic
$\kk$-algebra with $\dim_{\kk}\ov R_2=1$. We let overbars denote the
corresponding objects modulo $(x)$. Applying (a), there exists
$y\in R_1$ with $\ov y\ne 0$ such that $\ov y^2=0$ and $\ov y\,\ov
\m=\ov \m^2$.  We have thus $\m^2=x\m+y\m$. We make a
change of variables so that $x=x_1$ and $y=x_2$. Recall that
$\rank(x_1)=1$, so that $x_1\m=(x_1z)\ne 0$ for some $z\in R_1$. If
$x_1x_2\ne 0$, then we see that \ref{conditions}(2) holds with
$j=2$. If $x_1x_2=0$, note that $x_1, x_2,  z$ are linearly
independent. We take then $z=x_3$ and see that  \ref{conditions}(2)
holds with $j=3$.

(c) Assume $\dim_{\kk}R_2=3$. Recall  $R=Q/I$ and $I$ is
generated by quadrics. If $e=3$, then the ideal $I$ is minimally
generated by $3$ quadrics and $\dim Q=3$. Since $\dim R=0$, the
ideal $I$ has height $3$, and it is thus generated by a regular
sequence. The  ring $R$ is a complete intersection in this case.

Assume now $e>3$. By \ref{conca-element}, there exists $x\in R_1$
non-zero such that $x^2=0$. After a change of variables, we assume
$x_1=x$. We distinguish between the possible values of $\rank(x)$. Most of the work needs to be done when $\rank(x)=2$, so we treat the easier cases first.

If $\rank(x)=3$ then \ref{conditions}(1) holds.

Assume $\rank(x)=1$ and  consider the ring $\ov R=R/(x)$, which has
$\dim_{\kk}\ov R_2=2$.  We apply part (b) to the ring $\ov R$
to see that $\ov R$ satisfies \ref{conditions}(1) or
\ref{conditions}(2), up to a change of variables.  If $\ov R$
satisfies \ref{conditions}(1) up to a change of variables, then
proceed as in the proof of (b) to conclude that $R$ satisfies
\ref{conditions}(2).  Assume now that $\ov R$ satisfies
\ref{conditions}(2), with $\ov x_2, \dots, \ov x_e$ playing the role
of the minimal generating set of $\ov\m$. We have thus
${\ov\m}^2=\ov x_2\,\ov \m+\ov x_3\,\ov\m$, with ${\ov x_2}^2=0$,
${\ov x_3}^2\in \ov x_2\,\ov\m $ and $\ov x_2\ov\m=(\ov x_2\ov x_j)$
for some $j\notin \{1,2\}$. Lifting back to $R$ we have
$$\m^2=x_1\m+x_2\m+x_3\m\,,\,\, x_1^2=0\,,\,\, x_2^2\in x_1\m\,,\,\, x_3^2\in
(x_1, x_2)\m\,,\,\, (x_1,x_2)\m=(x_2x_j)+x_1\m\,.$$
Since $\rank(x_1)=1$,
we have $x_1\m=(x_1z)$ for some $z\in R_1$. Since $x_1^2=0$ and $R$ has no socle elements in degree $1$, we may assume $z=x_i$ for some $i\ne 1$ and thus \ref{conditions}(7)
holds.

Assume now $\rank(x)=2$. We introduce the following notation:
$$
 W=\{r\in R_1\mid r\m\subseteq x\m\}\,,\quad\text{and}\quad V=\ann(x)\cap R_1\,.\quad
$$
Since $x\in W$, note that $(W)\m=x\m$. We will consider now two
cases: $V\not\subseteq W$ and $V\subseteq W$. The second case requires most of the work.

\begin{Case1} $V\not\subseteq W$.

 There  exists $y\in V$ with $y\m\not\subseteq x\m$.  We
have then $$\m^2=x\m+y\m\quad\text{and}\quad xy=0\,.$$

 If  $y^2\in x\m$, then, since $\dim_{\kk} R_2=3$ and $\rank(x)=2$, we see there exists an element $z\in R_1\smallsetminus(\kk x+\kk y)$ such that $m^2=x\m+(yz)$. Then \ref{conditions}(3)
holds after a change of variables so that  $x_2=y$ and $x_3=z$.

Assume now $y^2\notin x\m$.  Set $\ov R=R/(x)$. We denote by
overbars the corresponding objects  modulo $(x)$. The ring $\ov
R$ has $\dim_{\kk}(\ov R_2)=1$, hence $\ov\m^2=({\ov y}^2)$. By (a),
there is $z\in R_1$ such that ${\ov z}^2=0$ and $\ov{\m}^2=\ov
z\,\ov\m$, hence $y^2-zz'\in x\m$ for some $z'\in R_1$, and $z^2\in
x\m$.

If  $\ov y\,\ov z\ne 0$, we  have $yz\notin x\m$, hence
$\m^2=x\m+z(y)$. The  hypotheses that $y^2\notin x\m$, $z^2\in x\m$
and $yz\notin x\m$ imply that $x,y,z$ are linearly independent.  Then
\ref{conditions}(4) holds, after a change of variables with $x_2=z$
and $x_3=y$.

If  $\ov y\,\ov z=0$, then $yz\in x\m$ and we have $\m^2=x\m+z(z')$.
The  hypotheses that $y^2\notin x\m$, $zz'\notin x\m$, $z^2\in x\m$
and $yz\in x\m$ imply that $x,y,z,z'$ are linearly independent.  Then
\ref{conditions}(5) holds, after a change of variables with $x_2=z$,
$x_3=z'$ and $x_4=y$.
\end{Case1}

\begin{Case2} $V\subseteq W$.

Since $\rank(x)=2$, we have $\dim_{\kk}(V)=e-2$, hence
$\dim_{\kk}(W)\ge e-2$. We cannot have $\dim_{\kk}(W)=e$, since
$\m^2\ne x\m$. If $\dim_{\kk}(W)=e-1$, then $S=R/(W)$ is an artinian
quadratic algebra with $\dim_{\kk}S_1=\dim_{\kk}S_2=1$. This   is a
contradiction, since no such algebra exists, hence
$\dim_{\kk}(W)=e-2$ and thus $W=V$ and $(V)\m=(W)\m= x\m$.
One has
\begin{equation}
\label{condition} (\ann(x)\cap R_1)\m=x\m\,.
\end{equation}
The  algebra $S=R/(V)=R/(W)$ is an artinian quadratic algebra with
$\dim_{\kk}S_1=2$ and $\dim_{\kk}S_2=1$. We denote by overbars the
corresponding classes modulo $(V)$. By (a), we know that
\ref{conditions}(1) holds for $S$, hence there exists $u\in S_1$ with $\ov
u^2=0$ such that $\ov u\,\ov \m=\ov \m^2$  and there exists $z\in
R_1$ such that $\ov \m^2=(\ov u\,\ov z)\ne 0$. We have found thus
linearly independent elements $u,z\in R_1\setminus V$ such that
\begin{equation}
\label{uu'1} R_1=W\oplus \kk u\oplus \kk z\,, \quad u^2\in x\m,\quad
uz\notin x\m \quad\text{and}\quad z^2\in (uz)+x\m\,.
\end{equation}
We have  thus $\m^2=x\m +(uz)=(xu,xz,uz)$. If $\ch \kk\ne
2$, one may also  assume $z^2\in x\m$, after possibly renaming the
elements.

If $\ch\kk\ne 2$, the proof of \cite[Proposition 2.1; Case (3)]{C2}
shows that there exists a non-zero element $\ell\in R_1$ with
$\ell^2=0$ such that if $\rank(\ell)=2$ then $(\ann(\ell)\cap
R_1)^2\not\subseteq \ell\m$.  Replacing $x$ with $\ell$, the cases
when $\rank(\ell)=1$ or $\rank(\ell)=3$ have already been settled.
Assume thus $\rank(\ell)=2$. Since  $(\ann(\ell)\cap R_1)\m\ne
\ell\m$ in this case, we see that, when replacing $x$ with $\ell$,
we cannot be in Case 2, and hence we must be in Case 1 above,  which
is already settled. Thus, if $\ch\kk\ne 2$ then the proof ends here.

For the remainder of the proof we assume  $\ch\kk=2$. This  assumption will be heavily used in our
computations below. We further distinguish several subcases of Case
2. The  convention is that the hypotheses and assumptions made within
the discussion of a case get passed on to its subcases.

\begin{Case2.1} There  exists $y\in V\setminus \kk x$ such that
$y^2=0$.

If $y$ is of rank $1$ or $3$ we can use our earlier work, with $y$
playing the role of $x$.  We assume thus $\rank(y)=2$. Since $y\in
V=W$, we have $y\m\subseteq x\m$, hence $y\m=x\m$ since both $x$ and
$y$ have rank $2$.  Since Case 1 is already settled, we may also
assume that  we fall into Case 2 with $y$ playing the role of $x$,
and thus
$$\ann(y)\cap R_1=
\{r\in R_1\mid r\m\subseteq y\m\}= W=V\,.
$$
The  arguments above show that we may assume
\begin{equation}
\label{fe} \ann(y')\cap R_1=\ann(x)\cap R_1\quad\text{for all}\quad
y'\in V\setminus \kk x\quad\text{ such that}\quad  y'^2=0\,.
\end{equation}

 If $yu=\alpha xu$ for
some $\alpha\in \kk$, we have then $u(y-\alpha x)=0$. Taking
$y'=y-\alpha x$ we obtain a contradiction with \eqref{fe}, since
$ux\ne 0$,  $uy'=0$ and $y'^2=0$.

We  have thus $yu\notin(xu)$,  hence $x\m=y\m=(yu, xu)$ and
$\m^2=(yu,xu,uz)$. We conclude $\rank(u)=3$.   We recall that
$u^2\in x\m$. Let $\alpha, \beta\in \kk$ such that  $u^2=\alpha
yu+\beta xu$.  We have then $uu'=0$ where $u'=u-\alpha y-\beta x$.
Note that $u'x=ux$, $u'y=yu\notin(xu)$, $u'z=uz-\alpha yz-\beta x
z$ and $u'z\notin x\m$ because $uz\notin x\m$ from \eqref{uu'1} and
$yz\in y\m= x\m$. We have then $\rank(u')=3$. If $u$ and $u'$ are
linearly independent, then \ref{conditions}(8)  holds after a change
of variables so that $x_1=u$ and $x_2=u'$. Otherwise, we have $u^2=0$
and \ref{conditions}(1) holds with $x_1=u$.
\end{Case2.1}

\begin{Case2.2} $y^2\ne 0$ for all $y\in V\setminus \kk x$.

If $\rank(y)=1$ for some $y\in V\setminus \kk x$, then, since
$y^2\ne 0$ and $y^2\in x\m$, we have $\m^2=x\m+u\m$, $y\m=(y^2)$ and
$x\m=(xt, y^2)$ with $t=u$ or $t=z$. Recall $u^2\in x\m$.
Condition \ref{conditions}(6) is satisfied with $x_1=x$, $x_2=u$,
$x_3=y$ and $j=2$ if $y^2\notin (xu)$ or  $j=4$ and $x_4=z$
if $y^2\in (xu)$.

We assume now $\rank(y)>1$ for all $y\in V\setminus \kk x$. Note
that $\rank(y)\ne 3$ because $y\m\subseteq (V)\m=x\m$. We have thus
\begin{equation}
\label{ym} \rank(y)=2 \quad\text{and}\quad x\m=y\m\quad\text{for
all}\quad y\in V\setminus \kk x\,.
\end{equation}
Since $V^2\subseteq (V)\m=x\m$, we have $\dim_{\kk}V^2\le 2$. As
$\dim_{\kk}V=e-2$ and $e\ge 4$, note that \ref{conca-element}
implies $e= 4$.  Let  $y\in V\setminus \kk x$. Note that  $x,y,z,u$
are linearly independent,  hence $\m=(x,y,z,u)$.

We further distinguish four subcases of Case 2.2. Before we proceed,
we point out here one of our strategies.   Since $x,u,z,y$ are linearly independent, we can choose the
variables $\tilde x_1, \dots, \tilde x_4$ such that
$\{x,u,z,y\}=\{x_1,  \dots, x_4\}$, say $x_1=x$, $x_2=u$, $x_3=z$ and $x_4=y$. It is however cumbersome to work with subindices, so we will only spell out such choices at the end of each case. In what
follows we will perform suitable changes of variables, that we view as
renaming certain elements. For example, when we want to rename an
element $z'$ as $z$, we indicate the change of variables by writing
$z\leftarrow z'$; this notation indicates that $z'$ becomes the
new element $z(=x_3)$, and all other $x_i$ with $i\ne 3$ remain
the same.  Whenever such a change of variables is performed, we need
to be careful that the relevant hypotheses are preserved. For
example, in order for a replacement $z\leftarrow z'$ to
preserve the hypotheses introduced so far, one would need to
check $z'\in R_1\smallsetminus V$, the elements $u,z'$ are linearly
independent,  $uz'\notin x\m$ and $z'^2\in (uz')+x\m$.

Recall that $z^2\in (uz)+x\m$, hence $z^2-\theta uz\in x\m$ for some
$\theta \in\kk$. Since $uz\notin x\m$ by \eqref{uu'1}, we have  $\theta=0$ if and only if  $z^2\in x\m$.

\begin{Case2.2.1} $z^2\in x\m$.

Since  $u^2\in x\m$, $(W)\m=x\m$ and $R_1=W\oplus \kk u\oplus \kk z$, the
assumption that $\ch\kk=2$ implies that  $w^2\in x\m$ for all $w\in
R_1$. Since $\rank(x)=2$  we see that $y^2,u^2,z^2$ are not linearly
independent. Using again the fact that $\ch\kk=2$, we conclude
there exists $w=\alpha y+\beta u+\gamma z$ with $\alpha, \beta,
\gamma\in \kk$ such that $w\ne 0$  and  $w^2=0$. Since $w\notin \kk
x$, we must have $w\notin V$, in view of the hypothesis of Case 2.2,
hence $\beta\ne 0$ or $\gamma\ne 0$. Without loss of generality,
assume $\beta\ne 0$. After a replacement $u\leftarrow
\beta u$, we may assume $\beta=1$. Note that $wz=\alpha yz+uz+\gamma
z^2\notin x\m$ because $yz\in x\m$, $z^2\in x\m$ and $uz\notin x\m$.
We have thus $\m^2=(xw,xz,wz)$. Since $w^2=0$, in view of the work
above, the only case that needs to be considered is when
$\rank(w)=2$ and the hypotheses of  Case 2 hold, with $w$ playing
the role of $x$. We have thus $w\m=(xw,wz)$,  and $(\ann(w)\cap
R_1)\m=w\m$. Note that $wy\in w(V)\subseteq (V)\m=x\m$. We have
$$wy\in  (w\m\cap x\m)_2=(\kk xw\oplus \kk wz)\cap (\kk xw\oplus \kk xz)=\kk xw\,.$$

We  have thus $wy=\lambda wx$ for some $\lambda\in \kk$.  Set
$y'=y-\lambda x$.  We have then
$wy'=0$. Note that $y'\in V\setminus \kk x$, hence $\rank(y')=2$ and
$y'\m=x\m$ by \eqref{ym}.   We also have   $y'\m\subseteq
(\ann(w)\cap R_1)\m=w\m$, hence $y'\m=w\m$ because
$\rank(y')=2=\rank(w)$.  We conclude  $w\m=x\m$ and thus $wz\in
x\m$, a contradiction.
\end{Case2.2.1}

We now proceed with the cases when $\theta\ne 0$. After the replacement $u\leftarrow \theta u$ we may assume  $z^2-uz\in x\m$.

\begin{Case2.2.2} $z^2-uz\in x\m$,  $yu=0$ and   $y^2\notin (xu)$ for some $y\in V\smallsetminus \kk x$.

Recall that we also  have $u^2\in x\m$ and $uz\notin x\m$ from
\eqref{uu'1} and $x\m=y\m$ from \eqref{ym}.

We write  $y^2=\alpha xu+\varepsilon xz$ with $\alpha, \varepsilon
\in \kk$ and $\varepsilon \ne 0$. We may assume $\varepsilon=1$
after the change $x\leftarrow \varepsilon x$. We claim that
after the replacement  $z\leftarrow z+\alpha u$, all the
hypotheses of Case 2.2.2 still hold. Indeed, we only need to check
$(\alpha u+z)^2-u(\alpha u+z)\in x\m$ and $u(\alpha u+z)\notin x\m$.
We have  $u(\alpha u+z)=\alpha u^2+uz\notin x\m$ because $u^2\in
x\m$ and $uz\notin x\m$. We also have $ (\alpha u+z)^2-u(\alpha
u+z)=(\alpha^2-\alpha) u^2+z^2-uz\in x\m $.

Since $y^2=x(\alpha u+z)$, the change $z\leftarrow z+\alpha u$ gives
\begin{equation}
\label{y^2} y^2=xz\,.
\end{equation}

Recall from \eqref{ym} that $\rank(y)=2$ and $y\m=x\m$.

We  write $yz=\beta xu+\gamma xz$ with $\beta, \gamma\in \kk$. If $\beta=0$,
we see that $y\m=(xz)$ because  $yx=0=yu$ and $y^2=xz$. This   is a
contradiction, since $\rank(y)=2$.  We have thus $\beta\ne 0$.
Note that  $y(z-\gamma y)=\beta x u$ and
$x(z-\gamma y)=xz$. The  replacement $z\leftarrow z-\gamma y$ preserves
the hypotheses of Case 2.2.2 and \eqref{y^2}. In addition, we now  have
\begin{equation}
\label{yz} yz=\beta xu\quad\text{with $\beta\ne 0$}\,.
\end{equation}

Since $z^2-uz\in(xu,xz)$ we write $z^2-uz=\varepsilon xu+\lambda xz$
with $\varepsilon, \lambda\in\kk$. Set $z'=z+\varepsilon x$. We have
then  $z'^2=z^2$, $xz'=xz$, $yz'=yz=\beta xu$ and $uz'=uz+\varepsilon ux$.
In particular, we see that $\rank(z')=3$. Using $\ch \kk=2$, we have then
$z'^2+uz'+\lambda xz'=0$, hence $z'v=0$,  where $v=z'+u+\lambda x$.
We have $vx=xz+ux$, $vy=zy=\beta xu$ and $vu=uz'+u^2+\lambda
xu=uz+(\varepsilon+\lambda)ux+u^2\notin x\m$.  We conclude
$\rank(v)=3$. Condition \ref{conditions}(8) then holds with $x_1=z'$
and $x_2=v$.
\end{Case2.2.2}

\begin{Case2.2.3}  $z^2-uz\in x\m$,  $yu=0$ and   $y^2\in (xu)$ for some $y\in V\smallsetminus \kk x$.

Since $y^2\ne 0$, we may assume (after replacing $x$ with a scalar multiple of itself):
\begin{equation}
\label{y^2=xu}
y^2=xu\,.
\end{equation}
 Since $\rank(y)=2$ and
$yu=0=yx$, we have  $yz=\gamma xu+\delta xz$ for some $\gamma, \delta \in \kk$ with $\delta\ne 0$,
hence $y(z-\gamma y)=\delta xz=\delta x(z-\gamma y)$. With  $z\leftarrow
z-\gamma y$ we get
\begin{equation}
\label{yz=xz} yz=\delta xz\quad\text{with $\delta\ne 0$}\,.
\end{equation} This  replacement leaves intact the hypotheses of Case 2.2.3 and \eqref{y^2=xu}.

Furthermore, since $z^2-uz\in(xu,xz)$ we have $z^2-uz=\alpha
xu+\beta xz$ for some $\alpha,\beta\in \kk$, hence $(z-\alpha x)^2-u(z-\alpha x)=z^2-uz+\alpha
xu=\beta xz$.  With $z\leftarrow z-\alpha x$, we have
\begin{equation}
\label{z^2-uz} z^2-uz=\beta xz\,.
\end{equation}
This replacement leaves intact the hypotheses of Case 2.2.3, as well as
\eqref{y^2=xu} and \eqref{yz=xz}.

 Set  $w=z-u-(\beta+\delta)x$. We have then $(z+y)w=z^2-uz-\beta xz-\delta xz+yz=0$.  Furthermore, note that $x(z+y)=xz$, $y(z+y)=yz+y^2=\delta xz+xu$ and $z(z+y)=z^2+zy$. In particular, $z(z+y)\notin x\m$ because $zy\in x\m$ and $z^2\notin x\m$, hence $\rank(z+y)=3$.
We also have $xw=xz-xu$, $yw=zy=\delta xz$ and $uw=zu-u^2-(\beta+\delta)xu \notin
x\m$, hence $\rank(w)=3$. We conclude that \ref{conditions}(8)
holds, after a change of variables so that $x_1=w$ and $x_2=z+y$.
\end{Case2.2.3}

\begin{Case2.2.4}  $z^2-uz\in x\m$ and $y'u\ne 0$ for all $y'\in V\smallsetminus \kk x$.

Let $y\in V\smallsetminus \kk x$. If $yu=\lambda xu$ for some
$\lambda\in \kk$, then $(y+\lambda x)u=0$. Since $y+\lambda x\in
V\smallsetminus \kk x$, this contradicts the hypothesis of the case.
We may assume thus $yu\notin (xu)$. We write $yu=\gamma xu+ \varepsilon xz$ with $\gamma, \varepsilon
\in \kk$ and $\varepsilon \ne 0$. We may assume (after replacing $x$ with a scalar multiple of itself) $\varepsilon=1$, and
hence $(y-\gamma x)u=xz$. With $y\leftarrow y-\gamma x$ we see
that the hypotheses of Case 2.2.4 are preserved and in addition we
have
\begin{equation}
\label{yu=xz}yu=xz\,.
\end{equation}

Now we write $y^2=\alpha xu+\beta xz$ with $\alpha,\beta\in \kk$.
 Then  $y^2=\alpha xu+\beta yu=(\alpha x+\beta y)u$.

Set $y'=\alpha x+\beta y$. We have $y'y=\beta y^2=\beta y'u$. We have thus $y'(\beta u+y)=0$ and $y'\in V$. Assume $\beta \ne 0$, hence $y'\in
V\smallsetminus \kk x$.  With  $u\leftarrow u+\beta^{-1}y$, we see that $y'u=0$
and the hypotheses of Case 2.2.2 or Case 2.2.3 hold, with $y'$ playing the role of $y$.

We assume now $\beta= 0$, hence $y^2=\alpha xu$. Recall that $y^2\ne
0$ by the hypothesis of Case 2.2, hence $\alpha \ne 0$. We have thus
\begin{equation}
\label{y^2xu} y^2=\alpha xu \quad\text{with $\alpha\ne 0$}\,.
\end{equation}
Since $\m^2=(xz,xu, uz)$,  we see that $\rank(u)=3$, in view of
\eqref{yu=xz}. Recall $u^2\in x\m$, hence $u^2=\alpha'
xu+\beta' xz$ with $\alpha',\beta'\in \kk$. Set $u'=u-\alpha'
x-\beta' y$. We have then $uu'=0$.  Note that $u'x=xu$,
$u'y=uy-\beta' \alpha xu=xz-\beta'\alpha xu$ and $u'z=uz-\alpha' xz-\beta'yz$. Since $y\m=x\m$
and $uz\notin x\m$, we  see that $\rank( u')=3$.
If $\alpha'\ne 0$ or $\beta'\ne 0$, then condition
\ref{conditions}(8) is satisfied after a change of variables
with $x_1=u$ and $x_2=u-\alpha'x -\beta'y$. If $\alpha'=\beta'=0$,
then \ref{conditions}(1) is satisfied with $x_1=u=u'$.
\end{Case2.2.4}
\end{Case2.2}
\end{Case2}

We now proceed to prepare for the proof of Theorem
\ref{artinian-case}. We first establish some notation to be used in
the proof, in reference to the cases introduced in \ref{conditions}.
The  notation for Koszul complexes introduced in
\ref{notation-Koszul} will be used below.

\begin{bfchunk}{Notation for cases {\rm (2)}--{\rm(6)}}
\label{2-3} Assume that the following hypothesis holds:
\begin{equation}
\label{hyp} \m^2=x_1\m+x_2\a\quad\text{and}\quad x_1^2=0\,,x_2^2\in
x_1\m
\end{equation}
where $\a$ is homogeneous ideal such that $\a=\m$ or $\a\subseteq
(x_2, \dots, x_e)$. For each $j$ we write
\begin{equation}
\label{x_j^2} x_j^2=x_1a_j+x_2b_j
\end{equation}
with $a_j\in \sum_{i\ne 1}(x_i)$,  $b_j\in \sum_{i\ne 1}(x_i)\cap
\a$.

We denote by $A_j$, $B_j$  elements in the Koszul complex $\mathcal
K^1$ such that $\partial(A_j)=a_j$ and $\partial(B_j)=b_j$. Since
$x_1^2=0$, we make the convention that $a_1=b_1=0$ and $A_1=B_1=0$.
Also, since $x_2^2\in x_1\m$ we make the convention that $b_2=0$ and
$B_2=0$. For each $j$ we consider the following elements in $Q$:
$$
f_j=\wt x_j^2 -\wt x_1\wt a_j-\wt x_2\wt b_j\,,
$$
where $\wt a_j, \wt b_j\in \sum_{i\ne 1}(\wt x_i)$ are preimages in $Q$ of the elements $a_j$, $b_j$. The  elements $f_j$ give rise to cycles $x_jX_j-x_1A_j-x_2B_j$ in $\mathcal K_1$. In the proof of Theorem \ref{artinian-case} we will adjoin divided powers variables $\Lambda_j$ of degree $2$  such that
$$
\partial(\Lambda_j)=x_jX_j-x_1A_j-x_2B_j\,.
$$
Considering the reverse lexicographic  term order with $\wt x_1<\wt
x_2<...$ we see that $\init(f_j)=\wt x_j^2$ for all $j$, and hence
$f_i,f_j$ form a regular sequence for all $j\ne i$.

We need to set some special assumptions when case \ref{conditions}(6) holds. In this
case, $\a=\m$ and one also has $x_3\m\subseteq x_1\m$ and
$x_1x_3=0$. For this reason, we may assume $a_j,b_j\in
\sum_{i\notin\{1,3\}}(x_i)$, hence we may assume  $A_j, B_j\in
{\mathcal K}^{1,3}$ for all $j$, and also $b_3=0$, $B_3=0$.
\end{bfchunk}

\begin{bfchunk}{Notation for case {\rm (7).}}
\label{4-5} Assume that the following hypothesis holds:
\begin{equation}
\label{hyp2} \m^2=x_1\m+x_2\m+x_3\m\quad\text{and}\quad
x_1^2=0\,,x_2^2\in x_1\m\,,x_3^2\in (x_1, x_2)\m\,.
\end{equation}
For each $j$ we write
$$
x_j^2=x_1a_j+x_2b_j+x_3c_j
$$
with $a_j,b_j, c_j\in \sum_{i\ne 1}(x_i)$.  We denote by $A_j$,
$B_j$, $C_j$  elements in the Koszul complex $\mathcal K^1$ such
that $\partial(A_j)=a_j$ and so on. Since $x_1^2=0$, we make the
convention that $a_1=b_1=c_1=0$ and $A_1=B_1=C_1=0$. Also, since
$x_2^2\in x_1\m$ we make the convention that $b_2=c_2=0$ and
$B_2=C_2=0$ and, since $x_3^2\in (x_1, x_2)\m$ we make the
convention that $c_3=0$ and $C_3=0$. For each $j$ we consider the
following elements in $Q$:
$$
f_j=\wt x_j^2 -\wt x_1\wt a_j-\wt x_2\wt b_j-\wt x_3\wt c_j\,,
$$
where $\wt a_j, \wt b_j, \wt c_j\in \sum_{i\ne
1}(\wt x_i)$  are preimages in $Q$ of the elements $a_j, b_j, c_j$, and $f_1=\wt x_1^2$, $f_2=\wt x_2^2-\wt x_1\wt a_2$,
$f_3=\wt x_3^2-\wt x_1\wt a_3-\wt x_2\wt b_3$. As before, the
elements $f_j$ give rise to cycles $x_jX_j-x_1A_j-x_2B_j-x_3C_j$ in $\mathcal K_1$. In the proof of Theorem \ref{artinian-case}, we will adjoin variables $\Lambda_j$ such that
$$
\partial(\Lambda_j)=x_jX_j-x_1A_j-x_2B_j-x_3C_j\,.
$$
Considering the reverse  lexicographic  term order with $\wt x_1<\wt
x_2<\wt x_3<...$ we see that $\init(f_j)=\wt x_j^2$ for all $j$,
hence $f_i,f_j, f_k$ form a regular sequence for all distinct
integers $i, j,k$.
\end{bfchunk}

\noindent{\it Proof of Theorem {\rm\ref{artinian-case}}}. In view of
\ref{reduction-prelim}, we may assume that $R$ has no socle
elements in degree $1$. Using Proposition
\ref{codim1-2structure}, we see that it suffices to construct the
desired Golod homomorphism in each one of the cases (1)-(8) from
\ref{conditions}. When $R$ is a complete intersection, no further
work is needed. These cases will be recalled below, as we handle
them. We will also make the point that $R_3=0$ in each of the cases
(1)--(8). If $R$ is  a complete intersection of embedding
dimension $3$ (and hence codimension $3$, since $R$ is artinian),
then $R_3=1$ and $R_4=0$.  For the cases (1)--(8) we identify a
short Tate complex corresponding to a quadratic complete intersection $P$ of codimension at most $3$.  Corollary
\ref{tool} shows then that the induced
homomorphism $P\to R$ is Golod and $R$ is absolutely Koszul.

\begin{Case(1)} $\m^2=x_1\m$ and $x_1^2=0$.

We have $\m^3=x_1\m^2=x_1^2\m=0$. By Corollary \ref{apply-st}, the
complex $\mathcal D=\mathcal K\langle \Lambda\mid
\partial(\Lambda)=x_1X_1\rangle$ satisfies $\nu(\m\mathcal D)=0$. The
compex $\mathcal D$ is a short Tate complex corresponding to the quadratic complete intersection is $P=Q/(\wt x_1^2)$.
\end{Case(1)}

In what follows, we use the notation introduced in \ref{2-3} and \ref{4-5}.

\begin{Case(2)} $\m^2=x_1\m+x_2\m$, $x_1^2=0$, $x_2^2\in x_1\m$ and $x_1\m=(x_1x_j)$, where  $j=2$ if $x_1x_2\ne 0$ and $j=3$ if $x_1x_2=0$.

We use the notation in \ref{2-3} with $\a=\m$. Consider the
following DG $\Gamma$-algebras:
$$
\mathcal D=\mathcal K\langle \Lambda_n\mid n\in \{2,j\}\rangle\,,\quad
\mathcal E=\mathcal K\langle  \Lambda_j\rangle\quad\text{and}\quad\mathcal
A=\mathcal K^1\langle \Lambda_2 \rangle\,.
$$

We first show that $\nu(x_1\mathcal E)=0$ and $x_1\m^2=0$.  If
$x_1x_2\ne 0$, then $j=2$ and $\m(x_1)=x_2(x_1)$. We have
$x_2^2=x_1a_2$ and $x_1\in \ann(x_1)$. Recall that
$\partial(\Lambda_2)=x_2X_2-x_1A_2$. We apply Proposition \ref{1} with
$\b=(x_1)$, $x_t=x_s=x_2$ and $K=x_1A_2$ to conclude that
$\nu(x_1\mathcal E)=0$ and $x_1\m^2=0$. If $x_1x_2=0$, then $j=3$
and $\m(x_1)=x_3(x_1)$. We have then $x_3^2=x_1a_3+x_2b_3$ and $x_1,
x_2\in \ann(x_1)\m$. Recall that
$\partial(\Lambda_3)=x_3X_3-x_1A_3-x_2B_3$. We  apply Proposition \ref{1}
with $\b=(x_1)$, $x_s=x_t=x_3$  and $K=x_1A_3+x_2B_3$ to conclude
that $\nu(x_1\mathcal E)=0$ and $x_1\m^2=0$.

Let overbars denote the corresponding objects modulo
$x_1$ as in \ref{reduction-x}. In the ring $\ov R=R/(x_1)$ we have
$\ov \m^2=\ov x_2\ov\m$, $\ov x_2^2=0$ and $\ov{\mathcal
A}=\ov{\mathcal K^1}\langle \ov \Lambda_2\rangle$, with $\partial(\ov
\Lambda_2)=\ov x_2\ov X_2$.  Corollary \ref{apply-st} gives
$\nu(\ov\m\ov{\mathcal A})=0$. Since $\nu(x_1\mathcal E)=0$ and $\nu(\ov\m\ov{\mathcal A})=0$,
Proposition \ref{reduction-proposition} shows  $\nu(\m\mathcal
D)=0$. The ideal $J$ generated by $f_n$ with $n\in
\{2,j\}$ is a complete intersection ideal. Thus  $\mathcal D$ is a
short Tate complex  corresponding to the  complete intersection ring
 $P=Q/J$. Note that $P$ has codimension $1$ or $2$.

Finally, to show $\m^3=0$, we recall that $x_1\m^2=0$ from above and
$x_2^2\in x_1\m$. We have
$$
\m^3=x_1\m^2+x_2\m^2=x_2\m^2=x_2(x_1\m+x_2\m)\subseteq x_1\m^2=0\,.
$$
\end{Case(2)}

\begin{Case(3)} $\m^2=x_1\m+x_2(x_3)$, $x_1^2=0=x_1x_2$ and $x_2^2\in x_1\m$.

We write $x_3^2=x_1a_1+x_3b$ with $b=\beta x_2$, $\beta\in \kk$ and
$a_1\in \sum_{i\ne 1}(x_i)$.   Consider the following DG
$\Gamma$-algebras:
$$
\mathcal D=\mathcal K\langle \Lambda, \Upsilon \rangle\,,\quad \mathcal
E=R\langle X_1,\Lambda\rangle\quad\text{and}\quad \mathcal A=\mathcal
K^1\langle \Upsilon\rangle
$$
where $\Upsilon$ and $\Lambda$ are such that $\partial(\Upsilon)=x_3X_3-x_1A_1-bX_3$ and $\partial(\Lambda)=x_1X_1$.

We now show $\nu(\ov x_2\ov{\mathcal A})=0$, where overbars denote
the corresponding objects modulo $x_1$. In the ring $\ov R=R/(x_1)$
we have $\ov \m^2=\ov\m(\ov x_2)=\ov x_3(\ov x_2)$,  $\ov x_3^2=\ov
x_3\ov b$ and $\partial(\ov \Upsilon)=\ov x_3\ov X_3-\ov b\,\ov X_3$.  Note
that $\ov b\ov x_2\in (\ov x_2^2)=0$, hence $\ov b\in \ann(\ov
x_2)$.   We apply then Proposition \ref{1} with $\b=(\ov x_2)$,
$x_s=x_t=\ov x_3$ and $K=\ov b\,\ov X_3$ to get that the complex
$\overline{\mathcal A}=\overline {\mathcal K^1}\langle \overline
\Upsilon\rangle$ satisfies  $\nu(\overline x_2\overline{\mathcal A})=0$. We
also get $\ov x_2\,\ov\m^2=0$, hence $x_2\m^2\subseteq x_1\m^2$.

Note that $\mathcal D$ can be identified with the complex $\mathcal
E\otimes_R\mathcal A$. We have  $\m^2=x_1\m+(x_2)\m$ and
$x_1(x_2)=0$. Since $\nu(\ov x_2\ov{\mathcal A})=0$,  we apply
Proposition \ref{mix} with $x_s=x_t=x_1$ and $\a=(x_2)$ to conclude
$\nu(\m\mathcal D)=0$. Since $f_1=\wt x_1^2$ and $g=\wt x_3^2-\wt
x_1\wt a_1-\beta \wt x_2\wt x_3$ form a regular sequence, $\mathcal
D$ is a short Tate complex corresponding to the complete
intersection ring $P=Q/(f_1, g)$.

Finally, to show $\m^3=0$, we recall that $x_2\m^2\subseteq x_1\m^2$
from above and $x_1x_2=0$, hence
$$
\m^3=x_1\m^2+x_2\m^2\subseteq x_1\m^2=x_1^2\m+(x_1x_2x_3)=0\,.
$$
\end{Case(3)}

\begin{Case(4)} $\m^2=x_1\m+x_2(x_3)$, $x_1^2=0=x_1x_3$ and $x_2^2\in x_1\m$.

We use the notation in \ref{2-3} with $\a=(x_3)$. Consider the
following DG $\Gamma$-algebras:
$$
\mathcal D=\mathcal K\langle \Lambda_1, \Lambda_2\rangle\,,\quad \mathcal
E=R\langle X_1,\Lambda_1\rangle\quad\text{and}\quad \mathcal A=\mathcal
K^1\langle \Lambda_2\rangle\,.
$$

We first show $\nu(\ov x_3\ov{\mathcal A})=0$, where overbars denote
the corresponding objects modulo $x_1$. In the ring $\ov R$ we have
$\ov \m^2=\ov\m(\ov x_3)=\ov x_2(\ov x_3)$, $\ov x_2^2=0\in \ann(\ov
x_3)\ov\m$ and $\partial(\ov \Lambda_2)=\ov x_2\ov X_2$. We apply then
Proposition \ref{1} with $\b=(\ov x_3)$, $x_s=x_t=\ov x_2$ and $K=0$
to get that the complex $\overline{\mathcal A}=\overline {\mathcal
K^1}\langle \overline \Lambda_2\rangle$ sastisfies  $\nu(\overline
x_3\overline{\mathcal A})=0$. We also get $\ov x_3\,\ov\m^2=0$,
hence $x_3\m^2\subseteq x_1\m^2$.

Note that $\mathcal D$ can be identified with the complex $\mathcal
E\otimes_R\mathcal A$. We have  $\m^2=x_1\m+(x_3)\m$ and
$x_1(x_3)=0$. Since $\nu(\ov x_3\ov{\mathcal A})=0$,  we apply
Proposition \ref{mix} with $x_s=x_t=x_1$ and $\a=(x_3)$ to conclude
$\nu(\m\mathcal D)=0$. Since $f_1, f_2$ is a regular sequence,
$\mathcal D$ is a short Tate complex corresponding to the complete
intersection ring $P=Q/(f_1, f_2)$.

Finally, to show that $\m^3=0$ recall that $x_3\m^2\subseteq
x_1\m^2$ and $x_1x_3=0$, hence
$$
\m^3=(x_1\m^2+x_2x_3\m)\subseteq x_1\m^2=x_1^2\m+x_1x_2(x_3)=0\,.
$$
\end{Case(4)}

\begin{Case(5)} $\m^2=x_1\m+x_2(x_3)$, $x_1^2=0=x_1x_4$, $x_2^2\in x_1\m, x_2x_4\in x_1\m$ and $x_4^2-x_2x_3\in x_1\m$.

We use the notation in \ref{2-3} with $\a=(x_3)$. Consider the
following DG $\Gamma$-algebras:
$$
\mathcal D=\mathcal K\langle \Lambda_1, \Lambda_4\rangle\,,\quad \mathcal
E=R\langle X_1,\Lambda_1\rangle\quad\text{and}\quad \mathcal A=\mathcal
K^1\langle \Lambda_4\rangle\,.
$$

We first show $\nu(\ov x_4\ov{\mathcal A})=0$, where overbars denote
the corresponding objects modulo $x_1$. In the ring $\ov R$ we have
$\ov \m^2=\ov\m (\ov x_4)=\ov x_4(\ov x_4)$ and $\ov x_4^2=\ov
x_2\ov x_3$, with $\ov x_2\in \ann(\ov x_4)$, and $\partial(\Lambda_4)=\ov
x_4\ov X_4-\ov x_2\ov B_4$. We apply then Proposition \ref{1} with
$\b=(\ov x_4)$, $x_s=x_t=\overline x_4$ and $K=\ov x_2\ov B_4$ to get that the complex
$\overline{\mathcal A}=\overline {\mathcal K^1}\langle \overline
\Lambda_4\rangle$ sastisfies  $\nu(\overline x_4\overline{\mathcal A})=0$.
We also get $\ov x_4\,\ov\m^2=0$, hence $x_4\m^2\subseteq x_1\m^2$.

Note that $\mathcal D$ can be identified with the complex $\mathcal
E\otimes_R\mathcal A$. We have  $\m^2=x_1\m+x_2(x_3)=x_1\m+(x_4)\m$
and $x_1(x_4)=0$. Since $\nu(\ov x_4\ov{\mathcal A})=0$,  we  apply
Proposition \ref{mix} with $x_s=x_t=x_1$ and $\a=(x_4)$ to get
$\nu(\m\mathcal D)=0$. Since $f_1, f_4$ is a regular sequence,
$\mathcal D$ is a short Tate complex corresponding to the complete
intersection ring $P=Q/(f_1, f_4)$.

Finally, to show that $\m^3=0$ recall that $x_4\m^2\subseteq
x_1\m^2$ and $x_1x_4=0$.  We further have:
$$
\m^3=x_1\m^2+x_4\m^2\subseteq x_1\m^2=x_1^2\m+x_1(x_4^2)=0\,.
$$
\end{Case(5)}

\begin{Case(6)}  $\m^2=x_1\m+x_2\m$, $x_1^2=0=x_1x_3$, $x_2^2\in x_1\m$,  $x_3\m=(x_3^2)$,  $x_1\m=(x_1x_j, x_3^2)$ where $j=4$ if  $x_1x_2\in x_3\m$ and $j=2$ if $x_1x_2\notin x_3\m$.

We use the notation of \ref{2-3} with $\a=\m$. Consider the following DG $\Gamma$-algebras:
$$
\mathcal D=\mathcal K\langle \Lambda_n\mid n\in \{2,3,j\}\rangle\,,\quad
\mathcal E=\mathcal K\langle \Lambda_3\rangle\quad\text{and}\quad \mathcal
A=\mathcal K^3\langle \Lambda_n\mid  n\in\{2,j\}\rangle\,.
$$
Note that the special assumptions for case (6) recorded in \ref{2-3} give $\partial(\Lambda_3)=x_3X_3-x_1A_3$ and $\partial(\Lambda_n)\in
\mathcal K^3$ for any $n\ne 3$, hence $\mathcal A$ is well-defined.

We have $(x_3)\m=x_3(x_3)$ and  $x_3^2\in x_1\m$, $x_1\in
\ann(x_3)$. We apply then Proposition \ref{1} with $\b=(x_3)$, $x_s=x_t=x_3$  and
$K=x_1A_3$ to conclude that $\nu(x_3\mathcal E)=0$ and $x_3\m^2=0$.

Now consider the ring $R=R/(x_3)$ and let overbars denote the
corresponding classes modulo $(x_3)$. In this ring we have $\ov
\m^2=\ov x_1\ov\m+\ov x_2\ov\m$ and $\ov x_1\ov \m=(\ov x_1\ov
x_j)$. The  ring $\ov R$ falls into Case (2). Using the proof of Case
(2), we see that $\nu(\ov \m\ov{\mathcal A})=0$. The  ring $\ov R$
satisfies $\ov \m^3=0$, hence $\m^3\subseteq x_3\m^2$. Since $\nu(x_3\mathcal E)=0$ and $\nu(\ov \m \ov{\mathcal A})=0$, we
apply Proposition \ref{reduction-proposition} to conclude that
$\nu(\m\mathcal D)=0$. Since the elements  $f_n$ with $n\in
\{2,3,j\}$ form a regular sequence, the complex $\mathcal D$ is a
short Tate resolution corresponding to the ring $P=Q/J$ where $J$ is
the ideal generated by this sequence.

Finally, to show $\m^3=0$, recall that we know  $x_3\m^2=0$ and
$\m^3\subseteq x_3\m^2$.
\end{Case(6)}

\begin{Case(7)} $\m^2=x_1\m+x_2\m+x_3\m$,  $x_1^2=0$, $x_2^2\in x_1\m$, $x_3^2\in (x_1, x_2)\m$, $x_1\m=(x_1x_i)\ne 0$ and $(x_1,x_2)\m= (x_1x_i, x_2x_j)$ with $x_2x_j\notin (x_1x_i)$, for some $i\ne 1$, $j\notin\{1,2\}$.   We may further assume that $i=2$ if $x_1x_2\ne 0$ and $i=3$ if $x_1x_2=0$ and $x_1x_3\ne 0$.

Consider the following DG $\Gamma$-algebras:
$$
\mathcal D=\mathcal K\langle \Lambda_n\mid n\in \{2,i,j\}\rangle\,,\quad
\mathcal E=\mathcal K\langle \Lambda_i\rangle\quad\text{and}\quad \mathcal
A=\mathcal K^1\langle \Lambda_n\mid n \in \mathcal \{2,j\}\rangle\,.
$$

We first show that $\nu(\ov\m\ov{\mathcal A})=0$, where overbars
denote the corresponding objects modulo $x_1$. In the ring $\ov
R=R/(x_1)$ we have $\ov \m^2=\ov x_2\ov\m+\ov x_3\ov \m$, $\ov
x_2^2=0$, $\ov x_3^2=\ov x_2\ov b_3\in \ov x_2\ov\m$ and $\ov x_2\ov
\m=(\ov x_2\ov x_j)$. The  proof of Case (2) above then gives
$\nu(\ov\m\ov{\mathcal A})=0$. Also, we know $\ov \m^3=0$, hence
$\m^3\subseteq x_1\m^2$. Next we  show $\nu(x_1\mathcal E)=0$ and $x_1\m^2=0$. We identify
three cases that are treated separately.

If $x_1x_2\ne 0$, then $i=2$. Note that $\m(x_1)=x_2(x_1)$ and
$x_2^2=x_1a_2\in \ann(x_1)\m$. We then apply Proposition \ref{1}
with $\b=(x_1)$, $x_s=x_t=x_2$  and $K= x_1A_2$ to conclude that $\nu(x_1\mathcal
E)=0$ and $x_1\m^2=0$.

 If $x_1x_2= 0$ and $x_1x_3\ne 0$,  then $i=3$. Note that $\m(x_1)=x_3(x_1)$ and $x_3^2=x_1a_3+x_2b_3\in \ann(x_1)\m$. We then apply Proposition \ref{1} with $\b=(x_1)$, $x_s=x_t=x_3$ and $K=x_1A_3+x_2B_3$ to conclude that $\nu(x_1\mathcal E)=0$ and $x_1\m^2=0$.

Assume now $x_1x_2=x_1x_3=0$. Note that $\m(x_1)=x_i(x_1)$ and
$x_i^2=x_1a_i+x_2b_i+x_3c_i\in \ann(x_1)\m$. We then apply
Proposition \ref{1} with $\b=(x_1)$, $x_s=x_t=x_i$ and $K= x_1A_i+x_2B_i+x_3C_i$ to
conclude that $\nu(x_1\mathcal E)=0$ and $x_1\m^2=0$.

Since $\nu(x_1\mathcal E)=0$ and $\nu(\ov\m\ov{\mathcal A})=0$,
Proposition \ref{reduction-proposition} shows that $\nu(\m\mathcal
D)=0$. Note that the ideal $J$ generated by $f_n$ with $n\in
\{2,i,j\}$ is a complete intersection ideal. Thus  $\mathcal D$ is a
short Tate complex and the  corresponding complete intersection ring
is $P=Q/J$; it has codimension $1$ or $2$ or $3$.

Finally, note that $\m^3\subseteq x_1\m^2=0$.
\end{Case(7)}

\begin{Case(8)}$\m^2=x_1\m=x_2\m$ and $x_1x_2=0$;
In this case apply Corollary \ref{apply-st} to conclude that
$\nu(\m\mathcal D)=0$, where $\mathcal D=\mathcal K\langle V\mid
\partial(\Lambda)=x_2X_1\rangle$. The  complex $\mathcal D$ is a short
Tate comples corresponding to the ring $P=Q/(\wt x_1\wt x_2)$. Finally, one has $\m^3=x_1\m^2=x_1(x_2\m)=0$.
\end{Case(8)}

In view of Corollary \ref{tool}, the considerations above finish the
proof of Theorem \ref{artinian-case}. \qed

\section{Non-artinian quadratic $\kk$-algebras with $\dim_{\kk}R_2\le 2$}
\label{dim2}

In this section we prove:

\begin{theorem}
\label{dim2-thm} Let $R$ be a quadratic standard graded
$\kk$-algebra with $\dim_{\kk}R_2\le 2$. Assume that $\kk$ is algebraically closed.

 Then $R$ is Koszul and there  exists a quadratic complete intersection $\kk$-algebra
$P$ of codimension at most $1$ and a surjective Golod homomorphism
$P\to R$. In particular, $R$ is absolutely Koszul. Moreoever, $R$ is
Golod when $\dim_{\kk}R_2\le 1$.
\end{theorem}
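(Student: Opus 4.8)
The plan is to reduce to the structurally simplest situations and then invoke the DG-algebra criteria of Sections \ref{Golod-Koszul-Tate} and \ref{key}. By \ref{reduction-prelim} it suffices to prove the theorem when $R$ has no socle elements in degree $1$: a trivial fiber extension of $\ov R$ again has $\dim_\kk R_2=\dim_\kk\ov R_2\le 2$, is Koszul exactly when $\ov R$ is by \ref{reduction-prelim-Koszul}, is Golod whenever $\ov R$ is, and a Golod homomorphism onto $\ov R$ from a quadratic complete intersection of codimension $c$ produces, by the construction in \ref{reduction-prelim}, a Golod homomorphism onto $R$ from a quadratic complete intersection of the same codimension $c$. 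Since the relevant properties are insensitive to a faithfully flat extension of the base field, I would also assume $\kk$ algebraically closed.

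When $R$ is Artinian the statement is essentially Theorem \ref{artinian-case}, which already gives that $R$ is Koszul and absolutely Koszul and admits a Golod homomorphism from a quadratic complete intersection. That the complete intersection can be taken of codimension at most $1$ I would deduce from Proposition \ref{codim1-2structure}: with $\dim_\kk R_2\le 2$ and no degree-$1$ socle, $R$ lies, up to a change of variables, in case \ref{conditions}(1) --- where the proof of Theorem \ref{artinian-case} uses $P=Q/(\wt x_1^2)$ --- or in case \ref{conditions}(2), which for $\dim_\kk R_2=2$ can always be arranged with $x_1x_2\ne 0$, so that there $P=Q/(f_2)$; in either case $P$ has codimension $\le 1$.

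Now suppose $R$ is not Artinian. If $\dim_\kk R_2\le 1$, then Macaulay's bound gives $\dim_\kk R_i\le 1$ for all $i\ge 2$; writing $R_i=\kk g_i$ for $i\ge 2$, associativity of the multiplication forces the linear forms $\mu_i\colon R_1\to\kk$ defined by $xg_i=\mu_i(x)g_{i+1}$ to be pairwise proportional for $i\ge 2$, and then one checks that every element of $\ker\mu_2$ annihilates $\m$; hence the absence of a degree-$1$ socle forces $\dim_\kk R_1\le 1$, so $R\cong\kk$ or $R\cong\kk[x]$. Such an $R$ is regular, hence Golod, Koszul and absolutely Koszul, and one takes $P=Q$; this is the source of the final assertion of the theorem. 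If $\dim_\kk R_2=2$, I would carry out a structural analysis --- in the spirit of, but lighter than, the one in Section \ref{artinian} --- to show that, after a change of variables and possibly after passing to $\ov R=R/(x)$ for a linear form $x$ with $\ov R$ a trivial fiber extension of a polynomial ring (so $\ov R$ is Golod and Koszul), the ring $R$ satisfies the hypotheses of Proposition \ref{nonA-use} or of Corollary \ref{use-Golod}, or else $R$ is a quadratic hypersurface $\kk[x,y]/(q)$. In each case the quoted result produces a short Tate complex $\mathcal D$ corresponding to a quadratic complete intersection $P$ of codimension $\le 1$ with $\nu(\m\mathcal D)=0$, and Corollary \ref{tool} then yields at once that $\varphi\colon P\to R$ is Golod, that $R$ is Koszul, and that $R$ is absolutely Koszul.

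The main obstacle is the structural analysis for $\dim_\kk R_2=2$ in the non-Artinian case, together with the refinement in the Artinian case that the complete intersection can be trimmed to codimension $1$: one must describe the quadratic algebras with $\dim_\kk R_2=2$ and no degree-$1$ socle finely enough to recognize in each the hypotheses of one of the results of Section \ref{key}, with extra care in characteristic $2$, where --- as the Remark following Proposition \ref{codim1-2structure} shows --- a Conca generator need not be available. Once the structure is identified, everything downstream is a formal application of the machinery already in place.
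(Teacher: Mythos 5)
Your overall architecture matches the paper's: reduce via \ref{reduction-prelim} to the case of no degree-$1$ socle with $\kk$ algebraically closed, dispose of $\dim_{\kk}R_2\le 1$ directly, and for $\dim_{\kk}R_2=2$ establish enough structure to feed one of the criteria of Section \ref{key} into Corollary \ref{tool}. Your argument that a non-Artinian $R$ with $\dim_{\kk}R_2\le 1$ and no degree-$1$ socle is a polynomial ring (via the proportional multiplication functionals $\mu_i$) is valid, though different from the paper's, which finds a null-square linear form by \ref{conca-element} and derives $\m^3=0$, contradicting non-Artinianness.

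The genuine gap is that the step you yourself flag as ``the main obstacle'' is the entire substance of the section, and you do not carry it out. The paper's Proposition \ref{structure-nonA} proves that a non-Artinian quadratic algebra with $\dim_{\kk}R_2=2$, no degree-$1$ socle and $\kk$ algebraically closed satisfies, after a change of variables, one of exactly three conditions: (a) $\m^2=x_1\m+(x_2^2)$ with $x_1^2=0=x_1x_2$; (b) $R$ is a hypersurface; (c) $R\cong\kk[\wt x_1,\wt x_2,\wt x_3]/(\wt x_1^2,\wt x_1\wt x_2,\wt x_2^2-\wt x_1\wt x_3,\wt x_2\wt x_3)$. Establishing this trichotomy is a nontrivial, characteristic-free case analysis built on the subspaces $V$, $W$, $W'$ of \ref{S} and the primality of $(W)$; without it there is no proof. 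Your guess at the endpoint is also slightly off: case (c) is handled by Corollary \ref{special-case} (that is, Proposition \ref{newp}), not by Corollary \ref{use-Golod}. Separately, your Artinian refinement --- that condition \ref{conditions}(2) ``can always be arranged with $x_1x_2\ne 0$'' when $\dim_{\kk}R_2=2$, so that the complete intersection drops to codimension $1$ --- is asserted without argument and is not obvious in characteristic $2$, where (as the remark after Proposition \ref{codim1-2structure} shows) a Conca generator need not exist; note that the paper's own proof of Theorem \ref{dim2-thm} sidesteps the Artinian case entirely, invoking only Proposition \ref{structure-nonA}, whose hypothesis is non-Artinian, and deferring Artinian rings to Theorem \ref{artinian-case} with its weaker codimension bound.
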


We use the notation introduced in the previous sections. As in Section 3, we first establish structure through the
next proposition. When $\ch\kk\ne 2$, such structure is already
established in \cite{C1}. Our proof, while inspired by \cite{C1},
develops arguments that are independent of characteristic.

\begin{proposition}
\label{structure-nonA} Let $R$ be a non-artinian quadratic standard
graded $\kk$-algebra with no socle elements in degree $1$. Assume
$\kk$ is algebraically closed.

The  following statements then hold, up to a change of variables.
\begin{enumerate}[\quad\rm(1)]
\item If $\dim_{\kk}R_2=1$, then $R= \kk[\wt x_1]$.
\item If $\dim_{\kk}R_2=2$,  then one of the following holds:
\begin{enumerate}[\quad\rm(a)]
\item $\m^2=x_1\m+(x_2^2)$ and $x_1^2=0=x_1x_2$;
\item  $R$ is a  quadratic hypersurface of embedding dimension $2$;
\item $R= \kk[\wt x_1, \wt x_2, \wt x_3]/(\wt x_1^2,\wt x_1\wt x_2, \wt x_2^2-\wt x_1\wt x_3, \wt x_2\wt x_3)$.
\end{enumerate}
\end{enumerate}
\end{proposition}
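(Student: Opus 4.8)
The plan is to manufacture a null-square linear form with \ref{conca-element}, pass to a codimension-one quotient, and bootstrap off part (1).

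First I would dispose of part (1) and the small cases of (2). If $\dim_{\kk}R_2=1$ and $e:=\dim_{\kk}R_1\geqslant 2$, then $\dim_{\kk}R_1>\dim_{\kk}R_1^2$, so by \ref{conca-element} there is $x\in R_1\smallsetminus\{0\}$ with $x^2=0$; since $R$ has no socle in degree $1$ we get $xR_1\neq 0$, hence $xR_1=R_2$ and $\m^2=x\m$, so $\m^3=x^2\m=0$ and $R$ is Artinian, a contradiction. Therefore $e=1$, and since the defining ideal is generated by quadrics while $R_2\neq 0$, it must be the zero ideal, i.e.\ $R=\kk[\wt x_1]$. In part (2), $e=1$ forces $\dim_{\kk}R_2\leqslant 1$, so $e\geqslant 2$; if $e=2$ then the defining ideal has one-dimensional degree-two piece, hence is principal, and $R$ is a hypersurface---case (b). So from now on $\dim_{\kk}R_2=2$ and $e\geqslant 3$.

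Next I would build the null-square form and the quotient. Since $\dim_{\kk}R_1=e>2=\dim_{\kk}R_1^2$, \ref{conca-element} gives $x\in R_1\smallsetminus\{0\}$ with $x^2=0$; it has $\rank(x)\neq 0$ (no socle) and $\rank(x)\neq 2$ (else $\m^2=x\m$ and $\m^3=0$), so $\rank(x)=1$. After a change of variable take $x=x_1$, so $x_1^2=0$ and $x_1\m=\kk g$ is one-dimensional. Being nilpotent, $x_1$ lies in every minimal prime, so $R/(x_1)$ has the Krull dimension of $R$ and is non-Artinian; it is quadratic with $\dim_{\kk}(R/(x_1))_2=\dim_{\kk}R_2-\rank(x_1)=1$. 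Killing the degree-one socle of $R/(x_1)$ (a trivial fiber extension, which preserves quadraticness, leaves $\dim_{\kk}(-)_2$ unchanged, and preserves non-Artinianness, degree-one socle elements being nilpotent) and invoking part (1), I conclude that $R/(x_1)$ is a trivial fiber extension of $\kk[\wt y]$. Lifting this structure back yields a basis $x_1,w,v_1,\dots,v_{e-2}$ of $R_1$ with $\m^2=\kk w^2\oplus\kk g$ and $v_iR_1\subseteq x_1R_1=\kk g$ for all $i$; thus $w^2\notin x_1\m$, the products $x_1w,x_1v_i,v_iv_j,v_iw$ all lie in $\kk g$, and we may rescale $g$ so that $g=x_1w$ when $x_1w\neq 0$.

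Finally I would run the case analysis toward (a) or (c). Say we are \emph{in the good position} if there is $y\in\ann(x_1)\cap R_1$ with $y\notin\kk x_1$ and $y^2\notin x_1\m$; then $\m^2=x_1\m+(y^2)$ with $x_1^2=0=x_1y$, so taking a basis beginning with $x_1,y$ puts $R$ in the form (a). The good position holds when $x_1w=0$ (take $y=w$) and also when $x_1w\neq 0$ and $x_1v_j\neq 0$ for some $j$ (take $y=v_j-c_jw$ with $c_j$ chosen so $x_1y=0$; then $y^2$ keeps the $c_j^2w^2$ summand, hence $y^2\notin\kk g$). In the remaining situation $x_1w\neq 0$ and $x_1v_j=0$ for all $j$, so $\ann(x_1)\cap R_1=\kk x_1\oplus\bigoplus_i\kk v_i$, a space on which $y\mapsto y^2$ takes values in $\kk g$. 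If $e\geqslant 4$, then $\bigoplus_i\kk v_i$ has dimension $\geqslant 2$ while its square lies in the line $\kk g$, so \ref{conca-element} provides a null-square $u\neq 0$ in it; rerunning the argument with $u$ in place of $x_1$, one finds that either the good position holds for $u$ (so (a) holds), or $uv_j=0$ for all $j$ while $uw\neq 0$, in which case some $x_1-\lambda u$ annihilates all of $R_1$ and is a nonzero degree-one socle element---impossible. Hence $e\geqslant 4$ always gives (a). If $e=3$, write $v=v_1$, $v^2=\phi g$, $vw=\psi g$; here $\phi\neq 0$, for otherwise $v-\psi x_1$ would annihilate $R_1$. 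Then $x_3:=w$ and $x_2:=\phi^{-1/2}(v-\psi x_1)$ satisfy $x_1^2=x_1x_2=x_2x_3=0$, $x_2^2=x_1x_3=g$, and $x_3^2=w^2\notin\kk g$; comparing degree-two parts of the defining ideals shows $R$ is the ring of (c). The crux is this last ``otherwise'' branch: tracking $x_1$, the ``polynomial'' lift $w$, and the socle lifts $v_i$ simultaneously, and showing the only configuration not reducible to (a) is the rigid ring in (c)---carried out uniformly in all characteristics, since the classical polarization arguments are unavailable when $\ch\kk=2$.
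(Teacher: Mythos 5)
Your proof is correct, and its skeleton coincides with the paper's: both produce via \ref{conca-element} a null-square linear form $x_1$ of rank one, both reduce to part (1) through the quotient $S=(R/(x_1))/(\socle(R/(x_1))_1)$ (the paper's $S=R/(W)$ with $W=\{r\in R_1\mid r\m\subseteq x_1\m\}$ is exactly this ring), and both finish with a case analysis whose branches match yours --- your ``good position'' holds precisely when the paper's $V\ne W$. The execution differs in two places worth recording. First, the paper uses that $(W)$ is a prime ideal (because $S$ is a polynomial ring) to conclude in one stroke that any $y\notin W$ has $y^2\notin x_1\m$ and that $z^2\notin x_1\m$; you obtain every such non-membership by expanding squares in the explicit lifted basis $x_1,w,v_1,\dots,v_{e-2}$ and isolating the $w^2$-component, which keeps the argument purely computational and visibly characteristic-free. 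Second, for $e\ge 4$ in the hard case the paper locates a null-square element $u$ inside $\ann(z)\cap R_1$ and pairs it directly with $z$ to land in (a), whereas you locate $u$ in the span of the $v_i$ and either manufacture a companion $y$ with $uy=0$, $y^2\notin u\m$, or contradict $\socle(R)_1=0$ via the element $x_1-\lambda u$; both are valid, and your socle contradiction is a tidy shortcut. The $e=3$ endgame --- normalizing $v$ to $x_2=\phi^{-1/2}(v-\psi x_1)$ and checking that the four quadrics of (c) exhaust $I_2$ --- is essentially the same computation as the paper's.
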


\begin{bfchunk}{Notation and general considerations.}
\label{S} Assume $R$ is a  quadratic standard graded $\kk$-algebra
with no socle elements in degree $1$. Let $x\in R_1$ be  a non-zero
element with $x^2=0$.  We adopt the following notation:
$$
 W=\{r\in R_1\mid r\m\subseteq x\m\}\,,\quad S=R/(W)\,,\quad V=\ann(x)\cap R_1\,,\quad\text{and} \quad W'=V\cap W\,.
$$
Note that $\kk x\subseteq W$ and $W/\kk x$ is the degree 1 component
of $\socle(\ov R)$, where $\ov R=R/(x)$. The  ring $S$ is thus
isomorphic to $\ov R/(\socle(\ov R)_1)$. We record below some needed
properties.

\begin{subchunk}
\label{S1} $\m (W')=\m(W)=x\m$. This  can be easily seen from the
definitions, together with the assumption that $x^2=0$, which
guarantees $x\in W'$.
\end{subchunk}

\begin{subchunk}
\label{S2} $\dim_{\kk}W=e-\dim_{\kk}S_1$ and
$\dim_{\kk}V=e-\rank(x)$.
\end{subchunk}

\begin{subchunk}
\label{S3} $\dim_{\kk}S_i=\dim_{\kk}\ov R_i$ for all $i\ge 2$ and in
particular $\dim_{\kk}S_2=\dim_{\kk}R_2-\rank(x)$.  \end{subchunk}
\begin{subchunk}
\label{S4} $S$ is artinian if and only if $\ov R$ is artinian if and
only if $R$ is artinian; the last equivalence holds because $x^2=0$.
\end{subchunk}
\begin{subchunk}
\label{S5} $S$ has no socle elements in degree $1$. Indeed, let $\ov
y\in \socle(S)_1$, where $y\in R_1$ and overbars denote classes
modulo $(W)$. We have then inclusions $y\m\subseteq W\m \subseteq
x\m$, hence $y\in W$, and thus $\ov y=0$. \end{subchunk}
\begin{subchunk}
\label{S6} $\dim_{\kk}W'\ge e-\dim_{\kk}S_1-\rank(x)$ and equality
holds iff $R_1=W+V$; this can be explained using the equality
$\dim_{\kk}W'=\dim_{\kk}W+\dim_{\kk}V-\dim_{\kk}(W+V)$ and \ref{S2},
noting that $\dim_{\kk}(W+V)\le \dim_{\kk}R_1=e$. \end{subchunk}
\begin{subchunk}
\label{S7} Assume $\rank(x)=1$, hence $\dim _{\kk} V=e-1$ and so
$R/(V)$ is a quadratic algebra of embedding dimension $1$. We have
thus $R/(V)\cong \kk[t]/(t^2)$ or $R/(V)\cong \kk[t]$.
\begin{enumerate}[\quad(i)]
\item If $R/(V)\cong \kk[t]/(t^2)$ then $\m^2=(V)\m$. This  implies that $x\m^2=0$. We have $(V)\subseteq \ann(x)$, hence $(V)\m=\m^2=\ann(x)\m$. In particular, we see that the ideals $(V)$ and $\ann(x)$ coincide in all degrees $i$ with $i\ge 2$. Since they also coincide in degree $1$ , we have thus $(V)=\ann(x)$.
\item If  $R/(V)\cong \kk[t]$, then $(V)$ is a prime ideal.
\end{enumerate}
\end{subchunk}
\end{bfchunk}

\noindent{\it Proof of Proposition {\rm \ref{structure-nonA}}.} (1)
Assume $\dim_{\kk}R_2=1$. If $e>1$, then by \ref{conca-element},
there exists $x\in R_1$ non-zero such that $x^2=0$. Since $R$ has no
socle elements in degree $1$ we have $x\m\ne 0$ and hence
$x\m=\m^2$. It follows that $\m^3=0$, a contradiction. We must have
thus $e=1$, and hence $R$ is a polynomial ring in one variable.

(2)  Assume $\dim_{\kk}R_2=2$, hence $e\ge 2$.  If  $e=2$, then (b)
holds. Assume now $e>2$.  Then  by \ref{conca-element} there exists
$x\in R_1$ non-zero such that $x^2=0$. Since $R$ is not artinian we have
$x\m\neq \m^2$. Since $x\m\ne 0$, we see that $\rank(x)=1$. The  ring
$\ov R=R/(x)$  is not artinian by \ref{S4} and has $\dim_{\kk}(\ov
R_2)=1$. It follows by (1) that $\ov R$ is isomorphic to a trivial
fiber extension of a polynomial ring in one variable.

We adopt  the notation of \ref{S}. By \ref{S4}, the ring $S$ is not
artinian. Since $\rank(x)=1$, we have $\dim_{\kk}S_2=2-1=1$ by
\ref{S3}. Since $S$ has no socle elements in degree $1$ by \ref{S5},
we use (1) to see that $S$ is a polynomial ring in one variable,
hence $(W)$ is a prime ideal and  $\dim_{\kk}S_1=1$. We conclude
$\dim_{\kk} W=e-1$ by \ref{S2}. We have then $\dim_{\kk}W'\le e-1$,
since $W'\subseteq W$.  On the other hand, \ref{S6} gives  an
inequality $\dim_{\kk} W'\ge e-2$, with equality when $R_1=W+V$.

\begin{Case1} $\dim_{\kk}W'=e-2$.

As noted in \ref{S6}, it follows that $R_1=W+V$. We have $W=W'\oplus \kk
z$ for some $z\in R_1\smallsetminus V$ with $z\m\subseteq x\m$ and
$V=W'\oplus \kk y$ for some $y\in V\smallsetminus W$. We have then
$R_1=W'\oplus \kk y\oplus \kk z$ and $x\m=(xz)$, since $\rank(x)=1$
and $xz\ne 0$. Since $y\notin W$ and $(W)$ is prime, we see
$y^2\notin (W)\m=x \m$, hence  $\m^2=(xz,
y^2)=x\m+(y^2)$. Note that $x,y$ are linearly independent, hence we
can make a change of variables so that $x_1=x$ and $x_2=y$, and
thus (a) holds.
\end{Case1}

\begin{Case2} $\dim_{\kk}W'=e-1$.

Recall that $\dim_{\kk} W=e-1$ and note that $\dim_{\kk}V=e-1$ by
\ref{S2}. We have thus $W=W'=V$.  In particular, $(V)$ is a prime
ideal and $\m(V)=x\m$. Let $z\in R_1$ such that $R_1=V\oplus \kk z$.
 Then  $x\m=(xz)$ and $z^2\notin (xz)$; otherwise, we would get
$z^2\in (V)$, hence $z\in V$, a contradiction.
 Thus  $\m^2=(xz,z^2)$.

Set
$U=\ann(z)\cap R_1$. Note $U\subseteq V$ because $(V)$ is a prime ideal and $z\notin
(V)$.  We have $(xz)\subseteq (V)z\subseteq (V)\m=x\m=(xz)$, hence equalities hold throughout. The linear map $V\xrightarrow{\cdot z}\kk xz$ is a surjection, hence $V=U\oplus \kk x$. So $R_1=V\oplus \kk z=U\oplus \kk x\oplus \kk z$.

We have $UR_1\subseteq (V)\m=(xz)$.  In particular, $\dim_{\kk}U^2\le 1$. If $\dim_{\kk} U>1$,  there exists $u\in U$ non-zero with
$u^2=0$ by \ref{conca-element}. Since $u$ is not a socle element and
$UR_1\subseteq (xz)$, we have $u\m=(xz)$, hence $\m^2=u\m+(z^2)$.
Condition (a)  is  satisfied, after a change of variables with
$x_1=u$ and $x_2=z$.

Assume now  $\dim_{\kk}U=1$. Note that
$\dim_{\kk}U=e-\rank(z)=e-2$ by \ref{S2}, and hence $e=3$ in this
case.  Let $u\in U$ non-zero, so that we have $U=\kk u$. If $u^2=0$
proceed as above. If $u^2\ne 0$, note that $u^2\in (xz)$, hence we
may assume $u^2=xz$, after possibly  replacing $z$ with a scalar
multiple of itself. Since $\dim_{\kk}R_2=2$, the  relations $x^2=xu=uz=u^2-xz=0$ define $R$ and, since $x,u,z,$ are linearly independent,  (c) holds with  $x_1=x$,
$x_2=u$, $x_3=z$. \qed
\end{Case2}

\begin{remark} If condition  (a) of Proposition \ref{structure-nonA} holds, then the argument in the proof of Proposition \ref{nonA-use} shows that $\m^i$ is principal for all $i>2$, see \eqref{x2}. The  same conclusion  holds when (c) holds. Furthermore, if (b) holds, then $\m^i$ is $2$-generated for all $i\ge 2$.

 Let $R$ be a standard graded quadratic $\kk$-algebra with
$\dim_{\kk}R_2\le 2$. Set $$s=\dim_{\kk}\left(\socle(R)\cap
R_1\right)\,.$$
In view of the considerations above, and upon taking
into account the socle elements in degree $1$ as well,  Proposition
\ref{structure-nonA} gives  the following formula for the Hilbert
series of $R$:
\begin{equation}\label{Hilbert2}
\HH_R(T)=1+eT+(\dim_{\kk} R_2)T^2+(\dim_{\kk} R_3)T^3(1-T)^{-1}\,.
\end{equation}
In addition,  one has:
\begin{enumerate}
\item $\dim_{\kk}R_3=0$ when $R$ is artinian, in view of Theorem \ref{artinian-case};
\item $\dim_{\kk}R_3=1$ when $R$ is not artinian and $e-s>2$ or $\dim_{\kk}R_2=1$;
\item $\dim_{\kk} R_3=2$ if and only if $e-s=2$ and  $\dim_{\kk} R_2=2$.
\end{enumerate}
\end{remark}

\begin{proof}[Proof of Theorem {\rm\ref{dim2-thm}}]
As in the proof of Theorem \ref{artinian-case}, we may assume that
$R$ has no socle elements in degree $1$. We use now Proposition \ref{structure-nonA}. If
$\dim_{\kk}R_2=1$, then $R$ is a polynomial ring, and hence is
Golod. Assume now $\dim_{\kk}(R_2)=2$.

If (a) holds, then we  apply Proposition \ref{nonA-use} with
$x_t=x_1$ and $x_s=x_2$ and Corollary \ref{tool} to conclude that
the induced homomorphism $Q/(\wt x_1^2)\to R$ is Golod and $R$ is absolutely Koszul.

If (b) holds there is nothing to prove, since the ring is a
hypersurface.

If (c) holds, we use Corollary \ref{special-case} (noting that $R$ is both quadratic and non-artinian) and
Corollary \ref{tool} to conclude that the induced homomorphism
$Q/(\wt x_2\wt x_3)\to R$ is Golod and $R$ is Koszul.
\end{proof}

\section{Non-artinian quadratic $\kk$-algebras with $\dim_{\kk}R_2=3$}
\label{dim3-nonartinian}

The work in this section is done in the case when the
embedding dimension $e$ satisfies $e\ge 4$ and $R$ has no  socle
elements in degree $1$. The  case $e=3$ is handled in the next section, within the proof of the Main Theorem. Here we prove:

\begin{theorem}
\label{last} Let $(R,\m,\kk)$ be  a non-artinian quadratic standard graded $\kk$-algebra
such that  $\dim_{\kk}(R_2)=3$,  $\dim_{\kk}R_1\ge 4$ and $\socle(R)\subseteq\m^2$. Assume that $\kk$ is algebraically closed.

 Then $R$ is Koszul  and there exists a surjective Golod homomorphism $\varphi\colon
P\to R$ with $P$ a quadratic complete intersection of codimension
at most $2$. In particular, $R$ is absolutely Koszul.
\end{theorem}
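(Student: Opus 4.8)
The plan is to follow the two-step strategy already used for Theorems \ref{artinian-case} and \ref{dim2-thm}: first establish a classification of the rings satisfying the hypotheses, up to a graded change of variable, and then, in each resulting case, exhibit a short Tate complex $\mathcal D$ corresponding to a quadratic complete intersection $P$ of codimension at most $2$ for which $\nu(\m\mathcal D)=0$; Corollary \ref{tool} then yields all three conclusions simultaneously. Since Koszulness, the Golod property of $\varphi$, absolute Koszulness, and the ranks of linear forms are all unaffected by a faithfully flat extension of the base field, I would assume $\kk$ algebraically closed; the hypothesis $\socle(R)\subseteq\m^2$ says precisely that $R$ has no socle elements in degree $1$. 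Write $R=Q/I$ with $I$ generated by quadrics and $e=\dim_{\kk}R_1\ge 4$.

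For the structural part, since $e\ge 4>3=\dim_{\kk}R_2$ and $R_2=R_1^2$, applying \ref{conca-element} with $U=R_1$ produces a nonzero $x\in R_1$ with $x^2=0$; set $r=\rank(x)\in\{1,2,3\}$. If $r=3$ then $\m^2=x\m$, so $x$ is a Conca generator and Corollary \ref{apply-st} applies at once with $s=t$, $\mathcal D=\mathcal K\langle\Lambda\mid\partial(\Lambda)=xX\rangle$ and $P=Q/(\wt x^2)$. If $r\le 2$ I would pass to $\ov R=R/(x)$, which is again non-Artinian because $x^2=0$ and has $\dim_{\kk}\ov R_2=3-r\le 2$, and to the ring $S=R/(W)$ with $W=\{a\in R_1\mid a\m\subseteq x\m\}$, exploiting the properties gathered in \ref{S}: $S$ is non-Artinian, has no socle in degree $1$, and $\dim_{\kk}S_2=3-r$. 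Feeding $S$ into Proposition \ref{structure-nonA} pins down the multiplication on a complement of $(W)$, and then, imitating the subcase analysis in the proofs of Proposition \ref{structure-nonA} and Proposition \ref{codim1-2structure} (tracking $V=\ann(x)\cap R_1$, $W'=V\cap W$, whether $(V)$ is prime, and performing book-kept changes of variable), one lifts back to a short list of normal forms for $R$. Each of these should involve at most two essential quadratic relations and be arranged to match the hypotheses of one of Corollary \ref{special-case}, Proposition \ref{newp}, Proposition \ref{nonA-use}, Corollary \ref{last-one}, Corollary \ref{use-Golod} (invoked with $R/(x_u)$ of a form covered by Theorem \ref{dim2-thm} with $\dim_{\kk}(\,\cdot\,)_2\le 1$, hence Golod and Koszul), Proposition \ref{mix}, or Proposition \ref{reduction-proposition}. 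As in Section \ref{dim2}, the powers $\m^i$ with $i>2$ that survive are either principal or $2$-generated, which is what permits $P$ to have codimension at most $2$.

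With the classification in hand, in every case the chosen $\mathcal D$ is a short Tate complex corresponding to a quadratic complete intersection $P=Q/(f_1,\dots,f_d)$ with $d\le 2$, and the cited result gives $\nu(\m\mathcal D)=0$; Corollary \ref{tool} then shows that the induced homomorphism $\varphi\colon P\to R$ is Golod, that $R$ is Koszul, and that $R$ is absolutely Koszul, completing the proof.

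The hard part will be the structural classification itself, and within it the characteristic-$2$ case: as the remark following Proposition \ref{codim1-2structure} shows, the convenient existence of Conca generators and the diagonalizability of quadratic forms used in \cite{C1}, \cite{C2} and \cite{Alessio} fail when $\ch\kk=2$, so one must instead argue directly with the multiplication table, performing a sequence of carefully tracked changes of variable (of the kind in Cases 2.1--2.2.4 of the proof of Proposition \ref{codim1-2structure}) to reach a normal form while verifying at each stage that the hypotheses required to apply the homological toolbox of Section \ref{key} are preserved.
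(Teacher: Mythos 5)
Your overall strategy coincides with the paper's: reduce to $\kk$ algebraically closed, produce a null-square linear form $x$ via \ref{conca-element}, branch on $\rank(x)$, analyze $\ov R=R/(x)$ and $S=R/(W)$ using \ref{S} and Proposition \ref{structure-nonA}, and in each resulting normal form identify a short Tate complex $\mathcal D$ with $\nu(\m\mathcal D)=0$ so that Corollary \ref{tool} delivers all three conclusions. (One small difference of organization: the paper deliberately does \emph{not} separate the classification from the construction of $\mathcal D$ here, precisely because there are about thirteen structural cases; it identifies the complete intersection as each case emerges. Also, your case $\rank(x)=3$ is vacuous rather than an application of Corollary \ref{apply-st}: $\m^2=x\m$ together with $x^2=0$ forces $R_3=0$, contradicting the non-Artinian hypothesis.)

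The genuine gap is that the entire substantive content of the theorem — the structural case analysis — is asserted rather than carried out. The sentence ``one lifts back to a short list of normal forms for $R$'' and the claim that each normal form ``should'' match the hypotheses of one of the toolbox results is exactly where the proof lives: for instance, when $\rank(x)=2$ and $W'=V$ one must manufacture, through a chain of carefully tracked substitutions, either a product $y_1y_i\notin(xy_1)$ feeding Proposition \ref{useful}, or the configuration of Proposition \ref{newp}, or a Golod quotient feeding Corollary \ref{use-Golod}; and when $\rank(x)=1$ with $(V)$ prime and $\rank(z)=3$ one is driven to $e=4$ and to the very specific relation patterns of Corollaries \ref{use-Golod} and \ref{last-one}. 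None of this is routine, it is not a formal consequence of Proposition \ref{structure-nonA} (which only governs the two-dimensional quotient $S$, not the lift back to $R$), and several subcases require re-entering an earlier case with a different element playing the role of $x$ or $z$ — a loop your outline does not anticipate. Your heuristic that the codimension bound $2$ follows because ``$\m^i$ is principal or $2$-generated for $i>2$'' is also not the actual mechanism; the codimension of $P$ is read off from which short Tate complex (one or two adjoined variables) happens to satisfy $\nu(\m\mathcal D)=0$ in each case. So while nothing in your plan would fail, the proof as written is a correct table of contents for the paper's argument rather than a proof.
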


\begin{remark}
In the proof of this theorem we depart from the strategy of the earlier sections, where  structural cases were established in the beginning. This is in part due to the fact that there are about 13 structural cases that need to be considered and we found it more efficient to construct the Golod homomorphism (by identifying a short Tate complex $\mathcal D$ with $\nu(\m\mathcal D)=0$) at the same time a particular structural case is identified. Our structural cases are comparable to the ones in \cite{Alessio}, but they do not use the hypothesis $\ch\kk\ne 2$.
\end{remark}

\noindent{\it Proof.}  We  identify the
complete intersection $P$ and the corresponding short Tate complex
such that $\nu(\m\mathcal D)=0$. The  conclusion follows then from
\ref{tool}.

Since $\dim_{\kk} R_1>\dim_{\kk} R_2$ we apply \ref{conca-element}
to find a  non-zero element $x\in R_1$ such that $x^2=0$. The  ring  $R/xR$ is  not artinian by \ref{S4}. If $\rank(x)=3$,
then $\m^2=x\m$ and this implies $R$ is artinian, a contradiction.
We consider next two main cases: $\rank(x)=2$ and $\rank(x)=1$. We use
the notation introduced in \ref{S} for the subspaces $V$, $W$, $W'$,
together with the properties recorded there.

\begin{Case1} $\rank(x)=2$.

We recall from \ref{S} that $S=R/(W)$ is not artinian, has no socle
elements in degree $1$ and $\dim_{\kk}(S_2)=\dim_{\kk}(R/xR)_2$,
hence $\dim_{\kk}(S_2)=1$.  By Proposition \ref{structure-nonA},
$R/(W)$ is isomorphic to a polynomial ring in one variable.  Then
$(W)$ is a prime ideal and  $\dim_{\kk}W=e-1$. We also have
$\dim_{\kk}V=e-2$ by \ref{S2}.

We further use \ref{S6} to see $e-3\leqslant \dim_{\kk} W'\leqslant
e-2$, with $\dim_{\kk} W'=e-3$ if and only if $R_1=W+V$.

If $\dim_{\kk} W'=e-3$, then $R_1=W+V$. We have  $V=W'\oplus \kk y$ with
$y\in V\smallsetminus W$, and $W=W'\oplus \kk z_1\oplus \kk z_2$
with $\m z_i\subseteq x\m$ for $i=1,2$. We have thus $R_1=W'\oplus
\kk y\oplus \kk z_1\oplus \kk z_2$ and $\m((z_1,z_2)+(W'))\subseteq
x\m$. We have thus $\m^2=x\m+ (y^2)$ and $xy=0$. We apply
Proposition \ref{nonA-use} with $x_t=x$ and $x_s=y$ to  conclude
that the short Tate complex $\mathcal D=\mathcal K\langle V\mid
\partial(\Lambda)=xX\rangle$ corresponding to $P=Q/(\wt x^2)$ satisfies
$\nu(\m\mathcal D)=0$.

If $\dim_{\kk} W'=e-2$, then we have $W'=V$ and $V\subseteq W$. In
particular, we have $V\m\subseteq x\m$.  Complete $x$ to a basis of
$V$, say $x, y_2,\ldots,y_{e-2}$. Let  $V'$ denote the subspace
generated by $y_2,\ldots,y_{e-2}$. We write  $R_1=W\oplus  \kk z$
and $W=V\oplus \kk y_1$ for some $z\in R_1\smallsetminus W$ and
$y_1\in W\smallsetminus V$. Since $x\m\subseteq(W)$ and $(W)$ is
prime we have $z^2\notin x\m$.  We have then
$\m^2=(xy_1,xz,z^2)$ and
\begin{align}
\label{y_im}
&y_i\m\subseteq (xy_1,xz)\quad\text{for all $i$}\,;\\
&xy_i=0\quad\text{for all $i>1$}\,.
\end{align}
We write $zy_i=\alpha_ixy_1+\beta_ixz$ with $\alpha_i,\beta_i\in
\kk$, and thus
$z(y_i-\beta_ix)=\alpha_ixy_1=\alpha_ix(y_1-\beta_1x)$. The
replacements $y_i\leftarrow y_i-\beta_ix$ for all $i$ do not
affect any of the assumptions on the elements $y_i$ that we made so
far, and, in addition,  yield
\begin{align}
&zy_i\in (xy_1)\quad\text{for all $i$}\,.
\end{align}
We also note that
\begin{equation}
\label{need-ann}
\ann(z)\cap \m^2\subseteq x\m\,.
\end{equation}
Indeed, let  $w\in \m^2\cap \ann(z)$. We write
$w=xa+\gamma z^2$ with $a\in\m$ and $\gamma\in R$.  We have then
$
0=wz=axz+\gamma z^3
$, so $\gamma z^3\in x\m\subseteq (W)$. Since  $(W)$ is a prime ideal not containing $z$, it follows $\gamma\in (W)$.  Then $\gamma z^2\in(W)\m=x\m$. Hence $w=xa+\gamma z^2\in x\m$, as desired.

\begin{Case1.1} $\rank(y_1)=2$.

Using the  relations above, we have  $y_1\m=(xy_1, xz)$. Since
$y_1z\in (xy_1)$ there exists $i$ such that $y_1y_i\notin\ (xy_1)$.

\begin{Case1.1.1} $y_1y_i\notin\ (xy_1)$ for some $i>1$.

The  hypothesis implies $x\m=(xy_1, y_1y_i)$. Set $\b=(x,y_i)$. We
have $x\b=(x^2, xy_i)=0$, hence $x\in \ann(\b)$,  and
$$\b\m=x\m=(xy_1,xz)=(xy_1, y_1y_i)=y_1\b$$
with $y_1^2\in x\m\subseteq \ann(\b)\m$. We write $y_1^2=xa$ with
$a\in \m$.  We apply Proposition \ref{useful} with $\a=\m$,  $x_s=z$, $x_t=y_1$ and $K=xA$
to conclude $\nu(\m\mathcal D)=0$, where $\mathcal
D=\mathcal K\langle \Lambda\rangle$ with $\partial(\Lambda)=y_1Y_1-xA$.
 The  complex $\mathcal D$ is a
short Tate complex and corresponds to the complete intersection
$Q/(\wt y_1^2-\wt x\wt a)$.
\end{Case1.1.1}

\begin{Case1.1.2} $y_1^2\notin (xy_1)$ and
$y_1y_i\in (xy_1)$ for all $i>1$.

We assume first that  $y_iy_j\notin(xy_1)$ for some $i>1$ and $j>1$.
 Then  $y_i(y_1-y_j)\notin(xy_1)$ and $\rank(y_1-y_j)=2$. The  replacement $y_1\leftarrow
y_1-y_j$ maintains the assumptions of Case 1.1, and we may thus
suppose that $y_1y_i\notin(xy_1)$, after this replacement.  This   is
Case 1.1.1.

We assume from now on that
\begin{equation}
\label{ij} y_iy_j\in(xy_1)\quad\text{for all}\quad i>1, j>1\,.
\end{equation}
 Since $zy_1\in (xy_1)$ and
$y_1y_i\in (xy_1)$ for all $i>1$, we write $zy_1=\alpha xy_1$ and
$y_1y_i=\beta_ixy_1$ with $\alpha, \beta_i\in \kk$. We have thus
$y_1(z-\alpha x)=0$ and $y_1(y_i-\beta_ix)=0$. The  replacements
$z\leftarrow z-\alpha x$ and $y_i\leftarrow y_i-\beta_ix$
for all $i>1$ preserve all  assumptions of Case 1.1.2 and
\eqref{ij}, and  yield
\begin{equation}
y_1z=0=y_1y_j\quad\text{for all} \quad j>1\,.
\end{equation}

Assume first that $y_2y_j=0$ for all $j>1$. We have then $y_2z\neq 0$, as
$\socle(R)\subseteq\m^2$. Since $\m^2=(xy_1, xz,z^2)$ and $y_2z\in (xy_1)$, it follows that $\m^2=z\m$.

Set $x_1=x$, $x_2=z$, $x_3=y_1$, $x_4=y_2, \dots $.
We claim that the hypotheses of Proposition \ref{newp} hold for $s=2$ and $t=3$ ( so $x_s=z$ and $x_t=y_1$). The hypotheses (1)-(2) have all been established above. It remains to check (3), namely $x_i\m\subseteq y_1\m$ for all $i\ne 2$. This is clear once we recall  $y_1\m=x\m=(xy_1, xz)$ and see that  $y_i\m\subseteq y_1\m$ by \eqref{y_im}. Proposition \ref{newp} applies thus, and yields $\nu(\m\mathcal D)=0$, where
$\mathcal D=\mathcal K\langle \Lambda \rangle$ with $\partial(\Lambda)=y_1Z$. The corresponding complete intersection is $P=Q/(\wt y_1\wt z)$.

Form now on, we may thus assume:
\begin{equation}
y_2y_j\neq 0\quad\text{for some $j>1$}\,.
\end{equation}

Since $y_1^2\notin (xy_1)$,  $y_1^2\in (xz, xy_1)$, we  replace $z$ with a scalar multiple
of itself and assume
\begin{equation}
\label{11} y_1^2-xz\in (xy_1)\,.
\end{equation}

Since $y_2y_j\in (xy_1)$, we replace $y_2$ with a scalar multiple
of itself and assume
\begin{equation}
y_2y_j=xy_1\,.
\end{equation}
 We write $y_j^2=ax$, with $a\in (y_1, z)$.
 Set $\ov R=R/(y_2)$ and let overbars denote the corresponding objects modulo $y_2$. We have:
$$ {\ov{x}}^2=\ov{x}\,\ov y_1=\ov y_1^2-\ov{x}\,\ov{z}=\ov y_1\,\ov{z}=\ov y_k\,\ov{\m}=0 \quad\text{for all $k>1$}\,.$$

Make a change a variable so that  $x_1=x$, $x_2= y_1$,  $x_3= z$, $x_i=y_{i-2}$ for all $i\ge 4$ and consider  the DG-algebras
$$
\mathcal D=\mathcal K\langle \Upsilon, \Lambda\rangle\,,\quad \mathcal E=\mathcal
K\langle \Upsilon\rangle\,,\quad\mathcal A=\mathcal K^{4}\langle \Lambda\rangle
$$
where $\partial(\Lambda)=y_1Z=x_2X_3$ and $\partial(\Upsilon)=y_jY_j-aX$. Since $\ov R$ is quadratic, Corollary \ref{special-case} yields
$\nu(\ov \m\ov{\mathcal A})=0$.

Since $\m(y_2)=(xy_1)=y_j(y_2)$ and $y_j^2\in x\m\subseteq
\ann(y_2)\m$ we apply Proposition \ref{1} with $\b=(y_2)$,
$x_s=x_t=y_j$  and $K=aX$ to get $\nu(y_2\mathcal E)=0$ and
$y_2\m^2=0$.  We apply next  Proposition \ref{reduction-proposition}
to obtain $\nu(\m \mathcal D)=0$. The  complex $\mathcal D$ is a
short Tate complex corresponding to the complete intersection ring
$P=Q/(\wt y_1\wt z, \wt y_j^2-\wt a\wt x)$.
\end{Case1.1.2}
\end{Case1.1}

\begin{Case1.2} $\rank(y_1)=1$.

In this case we have $y_1\m=(xy_1)$. After changes of variables of the type $y_i\leftarrow y_i-\alpha_ix$ for $i>0$ and $z\leftarrow z-\alpha x$, where $\alpha_i$, $\alpha\in \kk$, we
may assume, with no harm to previous assumptions, that
\begin{equation}
y_1y_i=y_1z=0\quad\text{for all $ i>1$.}
\end{equation}
If for some  $i>1$, $j>1$ we have $y_iy_j\notin(xy_1)$, then note
that $(y_1-y_j)y_i\notin (xy_1)=(x(y_1-y_j))$. This   is Case 1.1.1,
with $y_1-y_j$ playing the role of $y_1$.  Therefore we may further
assume
\begin{equation} y_iy_j\in(xy_1)\quad\text{for all $i>1$, $j>1$.}
\end{equation}
The  ring $R/(y_1)$   is isomorphic to a trivial fiber extension of
the hypersurface $\kk[\wt x, \wt z]/(\wt x^2)$,  and it is thus a
Golod Koszul ring by \ref{reduction-prelim}. Since
$(y_1)\m=x(y_1)$ and  $x^2=0\in \ann(y_1)\m$ we  apply Corollary
\ref{use-Golod} with $x_u=y_1$, $x_s=x_t=x$ and $K=0$ to get $\nu(\m
\mathcal{D})=0$, where $\mathcal D=\mathcal K\langle \Upsilon\mid
\partial(\Upsilon)=xX\rangle$. The  complex $\mathcal D$ is a short Tate
complex corresponding to the hypersurface $Q/(\wt x^2)$.
\end{Case1.2}
\end{Case1}

\begin{Case2} $\rank(x)=1$.

In this case we have  $\dim _{\kk} V=e-1$ and so  $R/(V)$ is a
quadratic algebra of embedding dimension $1$. We have thus
$R/(V)\cong \kk[t]/(t^2)$ or $R/(V)\cong \kk[t]$.

\begin{Case2.1}  $R/(V)\cong \kk[t]/(t^2)$.

As noted in \ref{S7}, we have  $(V)=\ann(x)$, $\m^2=\ann(x)\m$ and $x\m^2=0$.
Recall from \ref{S5} that $S=R/(W)$ has no socle elements in degree
$1$. By \ref{S3}, we have  $\dim_{\kk}S_2=2$ and
$\dim_{\kk}S_i=\dim_{\kk}(R/(x))_i$ for all $i\ge 2$ and thus
$\dim_{\kk}R_i=\dim_{\kk}S_i$ for all $i>2$.

We make a change of variables so that $x=x_1$. Since $x$ has rank
$1$, we write $x_1\m=(x_1v)$ for some $v\in \sum_{i\ne 1} \kk x_i$.

We consider now the structure of $S$, as given by Proposition
\ref{structure-nonA}.

\begin{Case2.1.1} $S$ satisfies condition (a) in  Proposition \ref{structure-nonA}.

Note that the ring $\ov R=R/(x_1)$ satisfies the same condition as
$S$, since $S=\ov R/((\socle \ov R)_1)$.

After a change of variables in which $x_1$ stays the same, we write
${\ov{\m}}^2=\ov x_2\ov{\m}+(\ov y^2)$, with  ${\ov x_2}^2=\ov
x_2\,\ov y=0$. Lifting back to $R$ we get
\begin{equation}\label{rank1}
\m^2=x_1\m+x_2\m+y^2R\qquad \text{with}\quad x_2^2\in x_1\m,\quad
x_2y\in x_1\m\,.
\end{equation}
Since $x_2^2\in x_1\m$ and $x_1\m=(x_1v)$, we write $x_2^2=\alpha x_1v$ with $\alpha \in \kk$.

If $x_1x_2\ne 0$, we can take $v=x_2$, hence $x_2^2=\alpha x_1x_2$.  Consider the DG-algebras
$$
\mathcal D=\mathcal E=\mathcal K\langle
\Lambda_2\rangle\quad\text{and}\quad\mathcal A=\mathcal K^1\langle
\Lambda_2\rangle
$$
where $\partial(\Lambda_2)=x_2X_2-\alpha x_1X_2$. Recall that
$(x_1)\m=(x_1x_2)=x_2(x_1)$. Since $x_2^2=\alpha x_1x_2$ and $x_1\in
\ann(x_1)$, we apply Proposition \ref{1} with $\b=(x_1)$,
$x_s=x_t=x_2$ and $K=\alpha x_1X_2$ to conclude that $x_1\m^2=0$ and
$\nu(x_1\mathcal E)=0$. In the ring $\ov R$ one has $\partial(\ov
\Lambda_2)=\ov x_2\ov X_2$. Proposition \ref{nonA-use} gives that
$\nu(\ov\m\ov{\mathcal A})=0$ and finally Proposition
\ref{reduction-proposition} shows that $\nu(\m\mathcal D)=0$. The
complex $\mathcal D$ is a short Tate complex corresponding to the
complete intersection $P=Q/(\wt x_2^2-\alpha \wt x_1 \wt x_2)$.

If $x_1x_2= 0$, then we can  make a change of variables such that
$v=x_e$ and $x_1, \dots, x_{e-1}$ is a basis of $V$. Since
$\m^2=(V)\m$, we write
\begin{equation}
\label{ee} x_e^2=\sum_{(i,j)\in \mathcal I}\alpha_{i,j}x_ix_j\quad
\text{with}\quad \mathcal I=\{(i,j)\mid 1\le i\le e-1, 1\le j\le
e\}\,.
\end{equation}
with $\alpha_{i,j}\in \kk$. Consider the DG-algebras
$$
\mathcal D=\mathcal K\langle \Upsilon, \Lambda\rangle\,,\quad\mathcal
E=\mathcal K\langle \Upsilon\rangle\quad\text{and}\quad\mathcal A=\mathcal
K^1\langle \Lambda\rangle
$$
where $\partial(\Upsilon)=x_eX_e-\sum_{(i,j)\in\mathcal
I}\alpha_{i,j}x_iX_j$ and $\partial(\Lambda)=x_2X_2-\alpha x_1X_e$. Recall $x_1\m=(x_1x_e)$ and note that
$x_i\in\ann(x_1)$ for all $i$ with $(i,j)\in \mathcal I$, hence
$x_e^2\in \ann(x_1)\m$. We  apply Proposition \ref{1} with $\b=(x_1)$, $x_s=x_t=x_e$ and
$K=\sum_{(i,j)\in\mathcal I}\alpha_{i,j}x_iX_j$ to conclude that
$\nu(x_1\mathcal E)=0$ and $x_1\m^2=0$. Consider the ring $\ov R=R/(x_1)$. Let overbars denote the
corresponding objects modulo $(x_1)$. The  ring $\ov R$ is not artinian and $\ov \m^2=\ov x_2\ov\m +(\ov y^2)$, with $\ov x_2^2=0$ and $\ov x_2\,\ov y=0$.  Proposition \ref{nonA-use}
applied to the ring $\ov R$ shows that $\nu(\ov\m\ov{\mathcal
A})=0$.  Then  Proposition \ref{reduction-proposition} gives
$\nu(\m\mathcal D)=0$. The  complex $\mathcal D$ is a short Tate
complex corresponding to the complete intersection $P=Q/(\wt
x_2^2-\alpha \wt x_1 \wt x_e, \wt x_e^2-\sum_{(i,j)\in\mathcal
I}\alpha_{i,j}\wt x_i\wt x_j)$.
\end{Case2.1.1}

\begin{Case2.1.2} $S$ satisfies condition (b) in Proposition \ref{structure-nonA}.

$S$ is thus a hypersurface.   Since $R/(x)$ is a trivial fiber
extension of $S$ and $S$ is a Golod Koszul ring, $R/(x)$ is
also Golod and Koszul by \ref{reduction-prelim}.  We make a change of variables such that
$x_e=v$ and $x_1=x,x_2 \dots, x_{e-1}$ is a basis of $V$. We write
$x_e^2$ as in \eqref{ee}. Recall that $x_1\m=(x_1x_e)$ and $x_e^2\in
\ann(x_1)\m$. We apply then Corollary \ref{use-Golod} with $x_u=x_1$
and $x_s=x_t=x_e$  to conclude that $\nu(\m\mathcal D)=0$, where $\mathcal D=\mathcal K\langle \Upsilon\rangle$, with $\partial(\Upsilon)=x_eX_e-\sum_{(i,j)\in\mathcal I}\alpha_{i,j}x_iX_j$. The  corresponding complete intersection is $P=Q/(\wt
x_e^2-\sum_{i,j}\alpha_{i,j}\wt x_i\wt x_j)$.
\end{Case2.1.2}

\begin{Case2.1.3} $S$ satisfies condition (c) in Proposition \ref{structure-nonA}.

Since $\ov R=R/(x)$ is  trivial fiber extension of $S$, we have $\ov
\m=( \ov y_1,\ov y_2,\ov y_3,\ldots,\ov y_{e-1})$, $\ov y_1^2=\ov
y_1\ov y_2=\ov y_2^2-\ov y_1\ov y_3= \ov y_2\ov y_3=0$ and $\ov
y_i\ov \m=0$ for all $i\geq 4$, where overbars denote the
corresponding objects modulo $(x)$.  Lifting back to $R$ we get
$\m=(x,y_1,\ldots,y_{e-1})$ and
$$
y_1^2,\, y_1 y_2,\,y_2^2- y_1 y_3,\, y_2 y_3\in x\m \quad \text{and}
\quad  y_i\m\subseteq x\m \quad \text{for all}\quad  i\geq 4.
$$
 We  have  $\m^2=(xv,y_2^2,y_3^2)=(xv,y_1y_3,y_3^2)$.

Assume first  $xy_2=0$. If $xy_i=0$  for all $i\neq 3$, then
$xy_3\neq0$ and a basis for $V$ consists of $x$ and all $y_i$ with
$i\ne 3$. It follows that $\m^2=(V)\m\subseteq (xy_3, y_1y_3)$. This
contradicts the hypothesis $\dim R_2=3$.  Therefore
 $x(y_2+y_i)=xy_i\neq 0$ for some $i\notin \{2,3\}$. If $i\ge 4$, then replace $y_2\leftarrow y_2+y_i$. If $i=1$, then replace $y_3\leftarrow y_3-y_2$ and $y_2\leftarrow y_2+y_1$. One can check that the hypotheses of Case 2.1.3 are retained in both cases. After the replacement(s), we have
\begin{equation}
xy_2\ne 0\,.
\end{equation}
 Since  $y_1y_2,y_2y_3\in x\m=(xy_2)$, we have $y_2(y_1-\beta x)=0$ and $y_2(y_3-\gamma x)=0$ for some $\beta,\gamma\in \kk$.
The  replacements $y_1\leftarrow y_1-\beta x$ and
$y_3\leftarrow y_3-\gamma x$ do not modify the assumptions
above, and we can thus make a change of variables as to assume
\begin{equation}
\label{and} y_1y_2=y_2y_3=0\,.
\end{equation}
We now argue below that we may further assume
\begin{equation}
\label{or} xy_3=0\quad\text{or}\quad  xy_1=0\,.
\end{equation}
Indeed, if $xy_3$ and $xy_1$ are non-zero, then $(xy_1)=(xy_3)$ and
hence $x(y_3-\alpha y_1)=0$ for some $\alpha \in \kk$. Note that we
also have $y_2(y_3- \alpha y_1)=0$. All assumptions above then hold
when replacing $y_3$ with $y_3-\alpha y_1$, and thus one may assume
$xy_3=0$.

We assume thus \eqref{or} holds. Also, we may take $x_1=x$ and $x_i=y_{i-1}$ for $i>1$. We write $y_2^2=y_1y_3+\varepsilon
xy_2$ with $\varepsilon\in\kk$. Consider the DG-algebras
$$
\mathcal D= \mathcal K\langle \Upsilon, \Lambda\rangle, \quad \mathcal E=
\mathcal K\langle \Upsilon\rangle\quad\text{and}\quad \mathcal A= \mathcal
K^1\langle \Lambda\rangle
$$
with  $\partial(\Upsilon)=y_2Y_2-\varepsilon xY_2-y_iY_j$ and
$\partial(\Lambda)=y_2Y_3$, where $(i,j)=(1,3)$ if $xy_1=0$ and
$(i,j)=(3,1)$ if $xy_3=0$. In both cases, we have $y_i\in \ann(x)$.

Using \eqref{or}, we see that   $y_2^2\in
\ann(x)\m$. We also have $\m(x)=y_2(x)$. Applying Proposition \ref{1}
with $\b=(x)$, $x_s=x_t=y_2$ and $K=\varepsilon xY_2+y_iY_j$ we
have then $\nu(x\mathcal E)=0$ and $x\m^2=0$. Let overbars denote
congruence classes modulo $x$. Since $\ov R$ is quadratic, Corollary \ref{special-case} gives
$\nu(\overline{\m}\overline{\mathcal A})=0$ and then it follows from
Proposition \ref{reduction-proposition} that $\nu(\m\mathcal{D})=0$.

The  complex $\mathcal D$ is a short Tate complex corresponding to
the complete intersection $P=k[\wt x,\wt y_1,\ldots,\wt
y_{e-1}]/(\wt y_2^2-\varepsilon\wt x\wt y_2-\wt y_1\wt y_3,\wt
y_2\wt y_3)$.
\end{Case2.1.3}
\end{Case2.1}

 \begin{Case2.2} $R/(V)\simeq \kk[t]$.

The  ideal  $(V)$ is  prime in this case. Let $z\in R_1$ such that
$R_1=V\oplus \kk z$. We have then  $x\m=(xz)$. Note that $z^i\not\in
(xz)$, since  otherwise we get $z\in (V)$ and this is a
contradiction.  Then   $\rank(z)>1$ holds.

 Set $U'=\ann(z)\cap R_1$.  Since $(V)$ is  prime  and $z\notin (V)$, we have thus
$U'\subseteq V$. Let $U$ denote a $\kk$-vector space such that
$U'\subseteq U\subseteq V$ and $V=U\oplus\kk x$.

\begin{Case2.2.1} $\rank(z)=2$.

 In this case we have $U=U'$, since both spaces have dimension $e-2$, hence $Uz=0=Ux$. We have $R_1=U\oplus \kk x\oplus \kk z$, hence $UR_1=U^2$ and $R_2=\kk xz+\kk z^2+U^2$. If $\dim_{\kk}U^2=3$, then $U^2=R_2$, hence $z^2\in U^2$ and in particular $z^2\in (V)$. This   is a contradiction, since $(V)$ is prime and $z\notin V$.
We have thus  $\dim_{\kk} U^2\leqslant 2$.

Assume there exists $u\in U$ non-zero such that $u^2=0$. Set
$x'=x+u$, and note that we have ${x'}^2=0$. Since $xu=uz=0$  and $\socle(R)\subseteq \m^2$, we have
$uv\neq 0$ for some $v\in U$. If $\rank(x')=2$, then the hypothesis
of Case 1 is verified, with $x'$ playing the role of $x$. Assume now
$\rank(x')=1$ and set
 $V'=\ann(x')\cap R_1$. If  $R/(V')$ is not artinian, then we have $R/(V')\cong \kk[t]$, which implies that $(V')$ is a
prime ideal. Since  $zv=0$ and $z\notin V'$  we obtain $v\in V'$.
 Then  $0=x'v=(x+u)v=uv$ which is a contradiction.  Therefore $R/(V')$
is artinian. This   is Case 2.1, with
 with $x'$ playing the role of $x$.

From now on we assume
\begin{equation}
\label{u^2ne0}
u^2\ne 0\quad\text{for all}\quad u\in U\quad\text{with}\quad u\ne
0\,.
\end{equation}

Note that if  $\dim_{\kk}U^2=1$, or if
 $e>4$, then
$\dim_{\kk}U^2<\dim_{\kk}U$.  In this case, there exists $u\in U$
non-zero such that $u^2=0$ by \ref{conca-element}, a contradiction.
We have thus
\begin{equation}
e=4\quad\text{and}\quad \dim_{\kk}U^2=2\,.
\end{equation}
Since $z^2\notin U^2$, we have $R_2=U^2\oplus \kk z^2$. Let $v, w$
be a basis of $U$.
Since $ v^2,w^2,vw$ are linearly dependent, $\kk$ is algebraically
closed, and $u^2\ne 0$ for all non-zero $u\in U$, we see that there
exist linearly independent elements $v'$, $w'$ in $U$ such that
$v'w'=0$. Renaming the  elements, we may assume thus
\begin{equation}
\label{or-char}
vw=0\,.
\end{equation}
In particular, it follows that $U^2=(v^2, w^2)$ and
\begin{equation}
\label{uvz}
\m^2=(v^2,w^2,z^2)=\m(v,w)+(z^2)\,.
\end{equation}
Since $z^2\notin (V)$, we also see that $xz\in (v^2,w^2)$. Without loss of generality, we may assume
\begin{equation}
\label{w^2}
w^2\in (xz, v^2)\,.
\end{equation}

Assume  first $w^2\in (xz)$. After replacing $x$ with a scalar multiple of itself, we may assume
$w^2=xz$. We make a change of variables so that $x_1=x, x_2=w, x_3=v$ and $x_4=z$. Since $(U)z=0$, $e=4$ and $\dim_{\kk}R_2=3$, we see that the ring $R$ is defined by the following relations
$$x_1^2=x_1x_2=x_1x_3=x_4x_2=x_4x_3=x_2x_3=x_2^2-x_1x_4=0\,.$$
We see that $x_2\m=(x_2^2)$ and $x_2^2=x_1x_4\in \ann(x_2)\m$. Furthermore, note that $R/(x_2)$ is
a trivial fiber extension of $\kk[\wt x_3, \wt x_4]/(\wt x_3\wt
x_4)$, which is a Golod Koszul ring, hence $R/(x_2)$ is a Golod Koszul ring by
\ref{reduction-prelim}.
We use Corollary \ref{use-Golod} with $u=s=t=2$ and $K=x_1X_4$ to conclude that $\nu(\m\mathcal D)=0$, where $\mathcal D=\mathcal K\langle \Lambda\rangle$ with $\partial(\Lambda)=x_2X_2-x_1X_4$. This   is a short Tate complex corresponding to
the hypersurface $Q/(\wt x_2^2-\wt x_1\wt x_4)$.

Finally, assume $w^2\notin (xz)$. In view of \eqref{w^2}, we can replace $v$ and $w$  by appropriate scalar multiples
of themselves as to assume $v^2+w^2=xz$. (Recall that $\kk$ is algebraically closed.)  Using \eqref{or-char}, note that $(v+w)^2=v^2+w^2=xz$. Set $u=v+w$. We have  $u(v,w)=(v^2,w^2)=\m(v,w)$. With $\b=(v,w)$, we have thus $u\b=\m\b$. Also, note that $u^2=xz\in \ann(\b)\m$ and $\ann(z)\cap
\m^2\subseteq (U)\m=\b\m$.  We also have $\m^2=\b\m+z(z)$ by \eqref{uvz}.  Note that $x, z, u$ are linearly independent, so we can make a change of variables so that $x_1=u$, $x_2=z$ and $x_3=x$.  Apply then Proposition \ref{useful} with $t=1$, $s=2$, $\a=(x_2)$ and $K=x_3X_2$ to see that $\nu(\m\mathcal D)=0$, where $\mathcal D=\mathcal K\langle \Upsilon\rangle$ with $\partial(\Upsilon)=x_1X_1-x_3X_2$. This   is a short Tate complex corresponding to the complete intersection $P=Q/(\wt x_1^2- \wt x_2\wt x_3)$.
\end{Case2.2.1}

\begin{Case2.2.2} $\rank(z)=3$.

In this case we have $\dim_{\kk} U=\dim_{\kk} U'+1$.  Let $u\in
U\smallsetminus U'$. We have then  $U=U'\oplus \kk u$ and $R_1=U'\oplus \kk x\oplus \kk u\oplus \kk z$.

Assume there exists $v\in U'$ non-zero such that $v^2=0$. Since Case 1 is already settled, we may assume $\rank(v)=1$. Set
$V'=\ann(v)\cap R_1$. If $R/(V')$ is artinian we are in Case 2.1,
with $v$ playing the role of $x$. Otherwise, since $vz=0=xv$ and
$v\m\ne 0$, there exists $z'\in U$ such that $vz'\ne 0$. We have
then $R_1=V'\oplus \kk z'$, hence $\rank(z')>1$ as argued in the
beginning of Case 2.2, with $z'$ playing the role of $z$. On the
other hand, we have $z'\m\subseteq  (V)$ hence $\rank(z')\le 2$
because $z^2\notin (V)$. We have thus $\rank(z')=2$. We are then in
Case 2.2.1, with $v$ playing the role of $x$ and $z'$ playing the
role of $z$.

We may assume thus from now on
\begin{equation}
\label{v^2} v^2\ne 0\quad\text{for all}\quad v\in U'
\quad\text{with}\quad v\ne 0\,.
\end{equation}

Assume now there  exists $u'\in U'$ such that $\rank(u')=2$.  We have inclusions
$$u'R_1\subseteq U'R_1\subseteq UR_1\subseteq VR_1\,.$$ Since $z^2\notin (V)$ and  $\rank(u')=2$, we see that these inclusions must be equalities and in particular $u'\m=(V)\m$ and  $\m^2=u'\m+(z^2)$.

We show now that  $\ann(z)\cap \m^2=u'\m$. Indeed, let $w=u'v+\alpha z^2\in \m^2$ with $v\in \m$ and $\alpha \in R$. If $wz=0$, then $\alpha z^3=0$. Since $(V)$ is a prime ideal not containing $z$, it follows that $\alpha \in (V)$, hence $\alpha z^2\in (V)\m=u'\m$. We conclude $w\in u'\m$.

Since $\rank(z)=3$, we have $\m^2=z\m$.  We see that the hypotheses of Proposition \ref{newp} are satisfied with
$x_s=z$ and $x_t=u'$. By Proposition \ref{newp}, we have $\nu(\m\mathcal D)=0$, where $\mathcal D=\mathcal K\langle \Lambda\rangle$ with $\partial(\Lambda)=x_tX_s$. This is a short Tate complex corresponding to the complete intersection $Q/(\wt u'\wt z)$.

Assume from now on
\begin{equation}
\rank(u')=1\quad\text{for all $u'\in U'$ with $u'\ne 0$.}
\end{equation}

Assume first  that $e>4$, hence $\dim_{\kk} U'\geq 2$. Let $u_1,
u_2$ be linearly independent in $U'$. If $u_1u_2\ne 0$, then we have
$u_1\m=(u_1u_2)=u_2\m$. If $u_1u_2=0$, then we have $u_1(u_1+u_2)\ne
0$ and $u_2(u_1+u_2)\ne 0$ and then we have
$u_1\m=(u_1+u_2)\m=u_2\m$. We conclude  that $u_1\m=u_2\m$ for all
$u_1, u_2\in U'$ non-zero, and hence $\dim_{\kk}(U'^2)=1$. By
\ref{conca-element}, there exists $u\in U'$ such that $u^2=0$ and
$u\ne 0$, a contradiction with \eqref{v^2}.

Assume now $e=4$. Let $v\in U'$ with $v\ne 0$, so that  $U'=\kk v$ and $U=\kk
u\oplus  \kk v$. We have thus $\m=(x,u,v,z)$.

If $uv\ne 0$, then \eqref{v^2} implies $(uv)=(v^2)$ since
$\rank(v)=1$. We have thus $(u-\alpha v)v=0$ for some $\alpha \in
\kk$. After replacing $u$ with $u-\alpha v$,  we may assume
\begin{equation}
\label{uv=0}uv=0\,.
\end{equation}
Note that since $\rank(z)=3$ and $zv=0$  we have $\m^2=(xz,uz,z^2)$. Since
$z^2\notin V$, we must have $u^2, v^2\in(xz,uz)$.

If $u^2\notin (xz)$, then we  write $u^2=\alpha xz+\beta uz$ for
some $\alpha,\beta \in \kk$ with $\beta\neq 0$. Set $z'=u-\beta z$.
We have then $uz'\in (xz)$, $xz'=-\beta xz$, $zz'=uz-\beta z^2$ and
$vz'=0$, hence $\rank(z')=2$. This   is Case 2.2.1, with $z' $ playing
the role of $z$.

We assume from now  on $ u^2\in (xz)$. We review now the relevant
assumptions so far:
\begin{equation}
\label{relevant} \m^2=(xz, uz, z^2), \,\,\, u^2\in(xz), \,\,\, 0\ne
v^2\in (xz,uz), \,\,\, x^2=zv=xv=xu=uv=0\,.
\end{equation}
Write
\begin{equation}
\label{ab}
v^2=\alpha xz+\beta  uz\quad\text{with}\quad \alpha, \beta\in \kk\,.
\end{equation}

We first assume $\beta=0$, so $v^2=\alpha xz$. Since $v^2\ne 0$, we have $\alpha\ne 0$, and we conclude $xz\in (v^2)$, and hence $(xz)=(v^2)$ and $u^2\in (v^2)$.  Since $\rank(v)=1$, we have $v\m=(v^2)=(xz)$.
We have then $R/(v)\cong k[\tilde x,\tilde u,\tilde z]/(\tilde
 x^2,\tilde u^2, \tilde x\tilde u,\tilde x\tilde z)$. Note that the image of $x$ is a socle element in $R/(v)$ and $R/(v,x)\cong  k[\tilde u,\tilde z]/(\tilde u^2)$. Since $R/(v,x)$ is a hypersurface, it is Golod, and hence $R/(v)$ is also Golod by \ref{reduction-prelim}.
  
 Since  $vz=0$, note that  $v^2\in \ann(v)\m$. Set $x_1=x,x_2=v,x_3=z,x_4=u$. The hypothesis of Corollary \ref{use-Golod}
 are satisfied with $u=s=t=2$ and so $\nu(\m \mathcal D)=0$, where $\mathcal D=\mathcal K\langle \Upsilon\mid \partial(\Upsilon)=x_2X_2-\alpha x_3X_1\rangle$.
  This   is a short Tate complex corresponding to the complete
  intersection $P=Q/(\tilde x_2^2-\alpha\tilde x_1\tilde x_3)$.

  Assume now $\beta\neq 0$ in \eqref{ab}. The replacement $
  u\leftarrow \alpha x+\beta u$  does not modify the assumptions \eqref{relevant}. Hence  we can make a change of variables as to assume that
  $v^2=uz$. We can furthermore  assume that $u^2=xz$ by replacing
$x$ with a scalar multiple of itself. With  $x_1=x$, $x_2=v$,
$x_3=z$, $x_4=u$, we see that the following relations hold:
\begin{equation}
x_1^2=x_1x_4=x_1x_2=x_2x_4=x_2x_3=x_2^2-x_3x_4=x_4^2-x_1x_3=0\,.
\end{equation}
 We apply Corollary \ref{last-one} (noting that $R$ is both non-artinian and quadratic) to see $\nu(\m\mathcal D)=0$, where $\mathcal D=\mathcal K\langle \Upsilon\rangle$ with $\partial(\Upsilon)=x_2X_2-x_4X_3$.
 This   is a short Tate complex  corresponding to the complete intersection $P=Q/(\tilde x_2^2-\tilde x_3\tilde x_4)$.
\end{Case2.2.2}
\end{Case2.2}
\end{Case2}
The proof in all cases is now complete, in view of  \ref{tool}. \qed

\section{Non-Koszul quadratic $\kk$-algebras with $\dim_{\kk}R_2\le 3$}
\label{exceptional-sec}

Let $\kk$ be an algebraically closed field. A complete classification, up to isomorphism, of non-Koszul quadratic $\kk$-algebras  with $\dim_{\kk}R_2\le 3$ is provided by D'Al\`i \cite[Theorem 3.1]{Alessio} in the case when $\ch\kk\ne 2$ and is recalled  in Remark \ref{Alessio-exceptional} below. D'Al\`i  also points out in \cite[Remark 3.2(b)]{Alessio} that the classification needs to be enlarged when $\ch\kk=2$. In this section we establish structure when $\ch \kk=2$ as well, but we stop short of providing a classification up to isomorphism. We provide information on the Hilbert series of non-Koszul quadratic $\kk$-algebras  with $\dim_{\kk}R_2\le 3$. This information is then used at the end of the section to finalize the proof of the Main Theorem from the introduction.

\begin{remark}
\label{Alessio-exceptional}
Assume $R$ is a standard graded quadratic  non-Koszul $\kk$-algebra, with $\kk$ an  algebraically closed field,  $\dim_{\kk}R_2\le 3$ and $\ch\kk\ne 2$. As shown by D'Al\`i \cite[Theorem 3.1]{Alessio}, $R$ is isomomorphic, up to trivial fiber extension,  with one of the following non-Koszul quadratic algebras:
\begin{enumerate}[\quad\rm(i)]
\item $\kk[x,y,z]/(y^2+xy,xy+z^2,xz)$;
\item $\kk[x,y,z]/(y^2,xy+z^2,xz)$;
\item $\kk[x,y,z]/(y^2,xy+yz+z^2,xz)$.
\end{enumerate}
Furthermore, if the characteristic is not $3$, then the algebras (ii) and (iii) are isomorphic.

If $R$ is any of the rings in (i)-(iii) above, one can check that $\m^i=(x^i)$ for all $i\ge 3$, hence $\dim_{\kk}R_i=1$ for all such $i$.  The result below shows that the same conclusion on the Hilbert series of $R$ holds when  $\ch\kk=2$. In its proof we will use repeatedly basic results that show an algebra is Koszul, as recalled in \ref{Grobner}.
\end{remark}

\begin{theorem}
\label{exceptional}
Let $\kk$ be an algebraically closed  field and let $(R,\m,\kk)$ be a standard graded quadratic $\kk$-algebra with  $\dim_{\kk}R_2\le 3$  and $\socle(R)\subseteq \m^2$.

If $R$ is not Koszul, then $\dim_{\kk}R_1=3=\dim_{\kk}R_2$ and $\dim_{\kk}R_i=1$ for all $i\ge 3$.
\end{theorem}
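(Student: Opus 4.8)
The plan is to reduce to the case where $R$ has no socle elements in degree $1$ (which is already the hypothesis) and where $\kk$ is algebraically closed, since the Koszul property and the Hilbert series are unaffected by a faithfully flat base field extension. Once $\kk$ is algebraically closed, I would split according to $\dim_{\kk}R_2$. If $\dim_{\kk}R_2\le 2$, then Theorem \ref{dim2-thm} shows $R$ is Koszul, contradicting the hypothesis; so $\dim_{\kk}R_2=3$. If $\dim_{\kk}R_1=\dim_{\kk}R_2=3$ and $R$ is Artinian, then by Theorem \ref{artinian-case}(a) we have $\dim_{\kk}R_3\le 1$ and $R_4=0$; but the Artinian case would need to be excluded, since an Artinian non-Koszul ring with $\dim_{\kk}R_3\le 1$ does not have the asserted Hilbert function unless we rule it out — so I expect to invoke a known fact that an Artinian quadratic algebra with $\dim_{\kk}R_1=\dim_{\kk}R_2=3$ that is \emph{not} a complete intersection is actually forced into a situation handled by Theorem \ref{artinian-case}(b), hence Koszul, again a contradiction. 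Therefore $R$ is not Artinian.

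Next, with $R$ non-Artinian, $\dim_{\kk}R_2=3$, and no socle in degree $1$, I would use Theorem \ref{last}: if $\dim_{\kk}R_1\ge 4$ then $R$ is Koszul, contradicting the hypothesis. Hence $\dim_{\kk}R_1=3$. So we are reduced to the case $\dim_{\kk}R_1=3=\dim_{\kk}R_2$, $R$ non-Artinian, $R=Q/I$ with $Q=\kk[\wt x_1,\wt x_2,\wt x_3]$ and $I$ generated by three quadrics. The first assertion $\dim_{\kk}R_1=3=\dim_{\kk}R_2$ is then established; it remains to compute $\dim_{\kk}R_i$ for $i\ge 3$.

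For the Hilbert series computation, the key observation is that $I$ is generated by exactly three quadrics in a polynomial ring in three variables, and $R$ is not Artinian, so $\dim R\ge 1$, i.e.\ $\operatorname{ht} I\le 2$. I would analyze the possible configurations of the three quadrics generating $I$ up to change of coordinates (using that $\kk$ is algebraically closed), distinguishing by $\operatorname{ht} I$ and by whether $R$ is Koszul. The cases where $R$ would be Koszul — e.g.\ when $I$ has a Gröbner basis of quadrics, in particular when the quadrics can be taken to be monomials, cf.\ \ref{Grobner} — must be excluded; what remains are precisely the non-Koszul configurations, which (for $\ch\kk\ne 2$) are exactly the three algebras listed in Remark \ref{Alessio-exceptional}, together with their $\ch\kk=2$ analogues. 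In each of these finitely many cases I would verify directly, by computing a minimal free resolution of $\kk$ or simply the Hilbert series of $R=Q/I$, that $H_R(t)=(1+2t-2t^3)(1-t)^{-1}=1+3t+3t^2+t^3+t^4+\cdots$, so that $\dim_{\kk}R_i=1$ for all $i\ge 3$; equivalently, one checks $\m^3$ is principal and $\m^i=x^{i-2}\m^2$ for a suitable $x\in R_1$ and all $i\ge 3$.

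The main obstacle I anticipate is the characteristic-$2$ analysis: for $\ch\kk=2$ the list of non-Koszul algebras is larger than D'Al\`i's list, and the structural reduction to a finite list of normal forms must be carried out without the $\ch\kk\ne 2$ hypothesis. I would expect to reuse the characteristic-free structural techniques developed in the proof of Theorem \ref{last} (the case analysis on $\operatorname{rank}(x)$ for a null-square linear form $x$, the subspaces $V$, $W$, $W'$ of \ref{S}, and the existence of null-square linear forms \ref{conca-element}) to pin down the possible multiplication tables, and then in each surviving non-Koszul case verify the Hilbert series by a direct (finite) computation. Since the statement only concerns the Hilbert function and not isomorphism classes, it should suffice to exhibit, in each case, an explicit element of $R_1$ whose powers generate $\m^i$ for $i\ge 3$, avoiding a full classification.
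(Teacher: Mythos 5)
Your proposal follows essentially the same route as the paper's proof: reduce to $\kk$ algebraically closed, use Theorems \ref{artinian-case}, \ref{dim2-thm} and \ref{last} to force $\dim_{\kk}R_1=\dim_{\kk}R_2=3$ with $R$ non-Artinian, invoke D'Al\`i's list (Remark \ref{Alessio-exceptional}) for $\ch\kk\ne 2$, and settle $\ch\kk=2$ by a finite case analysis built on \ref{conca-element} and the subspaces of \ref{S}, checking in each surviving non-Koszul case that $\m^i$ is principal for $i\ge 3$. Two minor remarks: the detour about Artinian complete intersections is unnecessary, since Theorem \ref{artinian-case}(b) asserts Koszulness for \emph{every} Artinian quadratic algebra with $\dim_{\kk}R_2\le 3$, so the Artinian case is excluded outright; and be aware that the characteristic-$2$ case analysis you defer to ``reusing the techniques of Theorem \ref{last}'' constitutes the bulk of the actual proof.
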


\noindent{\it Proof.}
The statement of the theorem remains invariant under faithfully flat extensions, so we may assume $\kk$ is algebraically closed. If $R$ is not Koszul, then Theorems \ref{artinian-case}, \ref{dim2-thm} and \ref{last} show that we must have $\dim_{\kk}R_2=3=\dim_{\kk}R_1$ and $R$ is not artinian. We write $R=\kk[\wt x,\wt y,\wt z]/I$, where $I$ is a homogeneous ideal minimally generated by $3$ quadrics. The elements $x,y,z$ denote the corresponding images of the variables in $R$.

 In view of Remark \ref{Alessio-exceptional}, we only need to consider the case $\ch\kk=2$. Although our work here could be adapted to recover the case $\ch \kk\ne 2$, we will assume from now on $\ch\kk=2$,  in order to simplify computations.  If $R$ is not Koszul, we will show that $R$ satisfies  $\dim_{\kk}R_i=1$ for all $i\ge 3$.

First, assume there is a linear form $\ell$ such that $\ell^2=0$. We may assume $\ell=x$. If $\rank(x)=3$, then $R$ is artinian. This is a contradiction.
Hence we have  $\rank(x)\leq 2$.

 Assume $\rank(x)=1$. We use the notation introduced in \ref{S}, namely
$$V=\ann(x)\cap R_1\quad\text{and}\quad W=\{r\in R_1\mid r\m\subseteq x\m\}\,.$$ We proceed as in Case 2  in the proof of Theorem \ref{last}. Assume first that $R/(V)\cong \kk[t]/(t^2)$. In this case, we consider the ring $S=R/(W)$ and note that $\dim_{\kk}S_2=2$. Since $e=3$ and $x\in W$, we must have $\dim_{\kk}S_1=2$ as well, hence $S$ is a hypersurface.  We see that the  proof of Case 2.1.2 in Theorem \ref{last} carries through. We may assume thus  $R/(V)\cong \kk[t]$ and $(V)$ is a prime ideal. After a change of variables, we may also assume $V=\kk x\oplus \kk  y$. We have thus $x^2=xy=0$ and $xz\ne 0$.

Since $(V)$ is prime and $z\not\in V$, we see that  $z^2\notin (x,y)\m$. If  $yz\in (xz)$, then, after a change of variables, the ring $R$ is defined by the relations $x^2=xy=yz=0$ and is Koszul. If $yz\notin (xz)$, then $yz\notin (z^2, xz)$, hence $\m^2=(z^2, xz, yz)$. We write
$$y^2=\alpha xz+\beta yz\quad\text{for some}\quad  \alpha, \beta\in \kk\,.$$
 If $\alpha=\beta=0$, then the ring $R$ is defined by $x^2=xy=y^2=0$ and is Koszul.  If $\beta=0$ and $\alpha\ne 0$, we can make a change of variables as to assume that the ring is defined by the relations $x^2=xy=y^2+xz=0$. In this case, it can also  be seen that $R$ is Koszul; for example, one can check that $\wt x^2, \wt x\wt y, \wt y^2+\wt x\wt z$ is a Gr\"obner basis of $I$ with respect to the lexicographic order with $\wt y>\wt x>\wt z$. If $\beta\ne 0$ we have $(y+\alpha\beta^{-1} x)(\beta z+y+ x)=0$, and we can make a change of variables as to assume that the ring is defined by the relations $x^2=xy=yz=0$, and is thus Koszul.

Assume now $\rank(x)=2$,
hence $\dim_{\kk} V=1$ and thus $V=\kk x$.  Set
\begin{equation*}
\label{H}H=\{u\in R_1\mid \rank(u)<3\}\,.
\end{equation*}
The proof of  \cite[Lemma 2.8(1)]{CRV}  shows that $H$ is a variety in the projective space $\mathbf{P}(R_1)$ and $\dim H\geq 1$. In particular it follows that there exists $\ell\in R_1$ with $\rank(\ell)<3$ and $\ell\notin \kk x$. We may assume $\rank(y)<3$ and thus $\dim_{\kk} \ann(y)\cap R_1\ge 1$. Let $u\in R_1$ such that $yu=0$. Since $y\notin \kk x=V$, we have $yx\ne 0$, and hence $u\notin \kk x$. We may assume $u=y$ or $u=z$.

Assume $u=y$, hence $y^2=0$. If $z^2\in (x,y)\m$, it follows that $z^4=0$. Recalling that $x^2=0$, we see that $R$ is artinian, a contradiction. We have thus $z^2\notin (x,y)\m$. Since $\rank(x)=2$ and $x^2=0$, we see that $x\m=\kk xy\oplus \kk xz$. We have thus $\m^2=(z^2, xy, xz)$. Since $z^2\notin (x,y)\m$, we have $yz\in (xy, xz)$. Let $\alpha, \beta\in \kk$ such that
$
yz=\alpha xy+\beta xz
$.
After a change of variables, we may assume the ring $R$ is defined by the relations $x^2=y^2=yz=0$, hence is Koszul. (The assumption $\ch\kk=2$ is used here.)

Assume now $u=z$, hence $yz=0$. As in the beginning of Case 1 of the proof of Theorem \ref{last}, we see that $\dim_{\kk}W=2$.
Let $W=\kk x\oplus \kk w$, where $w=\alpha y+\beta z$, with $\alpha, \beta\in \kk$.
Then $w(y,z)\subseteq x\m=(xz,xy)$.
 By symmetry we can assume that $\alpha \neq 0$ and since $yz=0$ we have then  $y^2\in (xz,xy)$. We write $y^2=\beta' xz+\gamma xy$, with $\beta', \gamma\in \kk$. The ring $R$ is then defined by the relations $x^2=yz=y^2+\beta' xz+\gamma xy=0$. Clearly, this ring is Koszul when $\beta'=\gamma=0$. Otherwise, $R$ is isomorphic to either (ii) or (iii) from Remark \ref{Alessio-exceptional}.

Now suppose that there is no null-square non-zero linear form in $R$.  Using  \cite[Lemma 2.8(1)]{CRV}, we see  that there exists  a non-zero linear form $\ell$ with $\rank(\ell)<3$. We may assume $\ell=x$, and then, since $x^2\ne 0$,  we may further assume $xy=0$.

Assume $\rank(x)=1$. In this case we may also assume $xz=0$. If $y^2\in (yz)$ we can make a change of variables to assume $R$ is defined by $xy=xz=yz=0$, and hence is Koszul. Assume now that $y^2, yz$ are linearly independent. If $z^2\in (y^2, yz)$, then we may asssume $z^2=y^2+\alpha yz$ or $z^2=\alpha yz$ with $\alpha\in \kk$. If $z^2=\alpha yz$,
we have $z(z-\alpha y)=0$ and the replacement $y\leftarrow z-\alpha y$ further yields that $R$ is defined by the relations $xz=xy=yz=0$ and $R$ is Koszul. Assume now $z^2=y^2+\alpha yz$. If $\alpha=0$, we get $(z-y)^2=0$, a contradiction. We have thus $\alpha\ne 0$. Since $\kk$ is algebraically closed, we see that there exists $a\in \kk$ such that $(z+ay)^2=\alpha y(z+ay)$. (We need $a^2+\alpha a=1$.) With this value of $a$, the  replacement $z\leftarrow z+ay$ yields $z^2-\alpha yz=0$, and this case has been treated already.  We may assume thus $z^2\notin (y^2, yz)$ and thus $\m^2=(z^2, yz, y^2)$, and in particular  $x^2\in (y,z)\m\subseteq \ann(x)\m$. Note that the ring $R/(x)$ is a hypersurface, hence is Golod.  Apply Corollary \ref{use-Golod} with $x_s=x_t=x_u=x$ and Corollary \ref{tool} to see that $R$ is Koszul.

Proceed similarly when $\rank(y)=1$.

Assume now that $\rank(x)=2=\rank(y)$. We have thus $xy=0$ and $xz\ne 0$, $yz\ne 0$.
Set $\ov R=R/yR$. We  have then
$\dim \ov R_1=2$ and $\dim \ov  R_2=1$. Applying \ref{conca-element}, there exists
$u\in R_1\setminus \kk y$ such that $u^2\in yR_1$. We
consider two cases.

\begin{Case1} $x^2\in yR_1$.

We write $x^2=\alpha y^2+\beta yz$ with $\alpha, \beta\in \kk$. If $\beta =0$ then we see that $(x+\alpha^{1/2}y)^2=0$ which is a
contradiction. Hence $\beta\neq 0$. After the replacement
$z\leftarrow \alpha y+\beta z$, we get $x^2=yz$. In particular, it follows that $xz$, $yz$ are linearly independent, since $\rank(x)=2$ and $xy=0$.

If $y^2\in (yz, xz)$, then $y^2=\alpha yz+\beta xz$ for some $\alpha, \beta\in \kk$. Note that $\beta\ne 0$, since $\rank(y)=2$ implies that $y^2, yz$ are linearly independent. The ring $R$ is thus defined by the relations
\begin{equation}
\label{NK1}
xy=x^2-yz=y^2-\alpha yz-\beta xz=0\quad \text{with}\quad \beta \ne 0\,.
\end{equation}
We see that $\m^2=(yz,xz,z^2)$ and $x^2z=xz^2=y^2z=yz^2=0$. Since $\m^2=z\m$, we get $\m^3=z\m^2=z^2\m=(z^3)$. Thus for all $i\ge 4$, $\m^i=z\m^{i-1}=(z^i)$  by induction, and hence $\dim_{\kk}R_i=1$ for all $i\ge 3$.

If $y^2\notin (yz, xz)$, then $\m^2=(xz,yz, y^2)$.  We have then  $z^2=\alpha x^2+\beta y^2+\gamma xz$ for $\alpha, \beta, \gamma\in \kk$. If $\beta\ne 0$, then we see that $R$ is artinian, a contradiction.  We have then $\beta=0$, so $z^2=\alpha x^2+\gamma xz$. Since $\kk$ is algebraically closed, we see that there exists $a\in \kk$ such that $(z+ax)^2=\gamma x(z+ax)$. (We need $a^2+\gamma a=\alpha$.) With this choice of $a$, the replacement $z\leftarrow z+ax$  gives $xy=x^2-yz=z^2-\gamma xz=0$. Since $R$ has no null-square linear forms, we see $\gamma\ne 0$. By further replacing $x\leftarrow \gamma x$ and $y\leftarrow \gamma^2y$, we may assume $R$ is defined by
\begin{equation}
\label{NK2}
xy=x^2-yz=z^2- xz=0\,.
\end{equation}
We can check that $y^2z=yz^2=x^2z=xz^2=0$. Since $\m^2=(y^2, xz,yz)$, this implies $\m^3=(y^2,xz,yz)(x,y,z)=(y^3)$. Using the relations $xy=zy^2=0$ and induction see that $\m^i=(y^i)$ for all $i\ge 3$, hence $R$ satisfies the desired conclusion.
\end{Case1}

\begin{Case2} $x^2\notin yR_1$.

We have thus $u\in R_1\setminus (\kk x+\kk y)$ and we may assume
$u=z$, hence $z^2\in yR_1$.  We have then $\m^2=(y^2, yz, x^2)$ and
there exist $\alpha'$, $\beta'$, $\gamma\in \kk$ such that
$xz=\alpha' y^2+\beta' yz+\gamma x^2$.

If $\beta'=0=\alpha'$, then $\rank(x)=1$ which is a contradiction.
If $\beta'=0\neq \alpha'$, then $y^2\in xR$, and this is Case 1 with $y$
and $x$ interchanged. Therefore we may assume
$\beta'\neq 0$. Replacing $y$ with $\beta' y$ we can assume that
$\beta'=1$. The  replacement $z\leftarrow \alpha' y+z$ yields $xz+ yz+\gamma x^2=0$ and it does not change the fact that $z^2\in yR_1$.
We have thus $z^2=\alpha y^2+\beta yz$, with $\alpha, \beta\in \kk$.
If $\gamma=\alpha=0$ then $\rank(z)=1$ and we are done by one
of the previously considered cases. We may assume thus $(\alpha, \gamma)\ne (0,0)$. The ring $R$ is thus defined by the relations:
\begin{equation}
\label{NK3}
xy=z^2+\alpha y^2+\beta yz=xz+ yz+\gamma x^2=0\quad\text{with}\quad (\alpha, \gamma)\ne (0,0)\,.
\end{equation}

If $\gamma=0\neq\alpha$ we use the defining relations to see that $xz^2=x^2z=yz^2=y^2z=z^3=y^3=0$, and we see from here that  $\m^i=(x^i)$ for all $i\ge 3$.

Assume now  $\gamma\neq 0$. We obtain  $xz^2=y^2z=z^3+\beta yz^2=\alpha y^3+yz^2=yz^2+\gamma x^2z=x^2z+\gamma x^3=0$  and we derive from here that  $\m^i=(y^i)$ for all $i\ge 3$. \qed
\end{Case2}

\begin{remark}
\label{classify}
The proof of Theorem \ref{exceptional} describes the structure of exceptional rings in any characteristic.  More precisely, we have: If $\kk$ is algebraically closed, $R$ is a non-Koszul standard graded quadratic $\kk$-algebra with $\socle(R)\subseteq \m^2$ and $\dim_{\kk}R_2\le 3$, then $R$ is isomorphic  to one of the rings in Remark \ref{Alessio-exceptional} or to one of the rings defined by the relations \eqref{NK1}, \eqref{NK2} and \eqref{NK3}.
\end{remark}

\begin{lemma}\label{non-K}
If $R$ is a standard graded $\kk$-algebra with Hilbert series  $(1+2t-2t^3)(1-t)^{-1}$, then $R$ is not Koszul.
\end{lemma}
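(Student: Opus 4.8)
The plan is to derive a contradiction from the numerical Koszulness criterion \ref{PH}. First I would record the explicit Hilbert function: expanding $(1+2t-2t^3)(1-t)^{-1}$ gives
\[
H_R(t) = 1 + 3t + 3t^2 + t^3 + t^4 + t^5 + \cdots,
\]
so $\dim_\kk R_0 = 1$, $\dim_\kk R_1 = \dim_\kk R_2 = 3$, and $\dim_\kk R_i = 1$ for all $i \ge 3$. Suppose, for contradiction, that $R$ is Koszul. Then \ref{PH} yields $\Po_R^\kk(z) = H_R(-z)^{-1}$, where $\Po_R^\kk(z) = \sum_{i\ge 0}\dim_\kk\Tor_i^R(\kk,\kk)\,z^i$; in particular every coefficient of the power series $H_R(-z)^{-1}$ must be a non-negative integer.

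Next I would compute $H_R(-z)^{-1}$ directly. Since $H_R(t) = (1 + 2t - 2t^3)(1-t)^{-1}$, we have $H_R(-z) = (1 - 2z + 2z^3)(1+z)^{-1}$, hence
\[
H_R(-z)^{-1} = \frac{1+z}{1 - 2z + 2z^3}.
\]
Writing $(1 - 2z + 2z^3)^{-1} = \sum_{n\ge 0} a_n z^n$, the coefficients obey $a_0 = 1$, $a_1 = 2$, $a_2 = 4$ and $a_n = 2a_{n-1} - 2a_{n-3}$ for $n \ge 3$, which gives $a_3 = 6$, $a_4 = 8$, $a_5 = 8$, $a_6 = 4$, $a_7 = -8$. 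Multiplying by $1+z$, the coefficient of $z^7$ in $H_R(-z)^{-1}$ equals $a_7 + a_6 = -4 < 0$, which is incompatible with its being $\dim_\kk\Tor_7^R(\kk,\kk)$. This contradiction shows that $R$ is not Koszul.

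This is a short and routine computation, so there is no genuine obstacle; the only point to be careful about is to push the recursion far enough, since the coefficients $1,3,6,10,14,16,12,-4,\dots$ of $H_R(-z)^{-1}$ stay positive through degree $6$ and only turn negative in degree $7$. (Conceptually, one may note that the dominant roots of $1 - 2z + 2z^3$ form a complex conjugate pair, which forces the coefficients of $H_R(-z)^{-1}$ to change sign infinitely often and hence to be negative for infinitely many degrees; but for the proof the single negative value in degree $7$ already suffices.)
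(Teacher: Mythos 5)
Your proof is correct and follows essentially the same route as the paper: invoke the numerical criterion \ref{PH}, expand $H_R(-z)^{-1}$, and observe that the coefficient in homological degree $7$ is $-4<0$ (the paper computes the same sequence $1,3,6,10,14,16,12,-4$ via the recursion $b_{i+3}-2b_{i+2}+2b_i=0$ rather than through the auxiliary series $(1-2z+2z^3)^{-1}$, but this is only a difference of bookkeeping). No issues.
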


\begin{proof}
Assume $H_R(t)=(1+2t-2t^3)(1-t)^{-1}$.  Let $\Po_{\kk}^R(z)=\sum_{i\ge 0}b_iz^i$ be the Poincar\'e series of $\kk$ over $R$, where $b_i=\dim_{\kk}\Tor_i^R(\kk,\kk)$.

If $R$ is Koszul, then the relation $\Po_R^{\kk}(t)H_R(-t)=1$ holds by \ref{PH}. We have thus
$$
(b_0+b_1t+b_2t^2+...)(1-2t+2t^3)=1+t\,.
$$
The sequence $\{b_i\}$ satisfies $b_0=1$, $b_1-2b_0=1$, $b_2-2b_1=0$ and $b_{i+3}-2b_{i+2}+2b_i=0$ for all $i\ge 0$.
We have thus $b_1=3$, $b_2=6$, $b_3=10$, $b_4=14$, $b_5=16$, $b_6=12$ and  $b_7=-4$. The last equality is a contradiction.
\end{proof}

We are now  ready to prove the Main Theorem, as stated in the
introduction.

\begin{proof}[Proof of Main Theorem.]
Assume $R$ is a standard graded quadratic $\kk$-algebra with $\dim_{\kk}R_2\le 3$.
We first prove the following statement: If $\kk$ is algebraically closed, there  exists a surjective
Golod homomorphism $P\to R$ with $P$ a complete intersection of
codimension $3$.

To  prove this statement, we may assume there are no socle elements
in degree $1$, in view of \ref{reduction-prelim}. If $\dim_{\kk}R_1=3$, then the existence of a Golod homomorphism $P\to R$ from a complete
intersection follows from \cite[Proposition 6.1]{AKM};  one can also see
that $\codim(P)\le 3$. Assume now $\dim_{\kk}R_1\ne 3$.  The statement is  proved then by Theorem
\ref{artinian-case} when $R$ is artinian and  by Theorem
\ref{dim2-thm} when $\dim_{\kk}R_2\le 2$. Assume thus $R$ is not artinian and  $\dim_{\kk}R_2=3$. If $\dim_{\kk}R_1<3$, then $R$ is a polynomial ring, hence it  is a Golod ring.  If $\dim_{\kk}R_1>3$, then the statement is proved by Theorem \ref{last}.

We now prove the equivalence of the three statements in the theorem.  We set $\ov R=R\otimes_{\kk}\ov{\kk}$.

Since the ring
is graded, we know (1)$\implies$(2) by \cite[Proposition 1.13]{HI}. We use \ref{reduction-prelim-Koszul} and Lemma \ref{non-K} to see  (2)$\implies$(3). The implication (2)$\implies$(1) follows from \ref{Golod-ci}.

We prove now the implication (3)$\implies$(2). Assume $R$ is not Koszul, hence $\ov R$ is not Koszul either. Theorem \ref{exceptional} shows that $\ov R$ is a trivial fiber extension of a standard graded $\kk$-algebra with Hilbert series $(1+2t-2t^3)(1-t)^{-1}$.  In particular, we have $e\ge 3$ and \begin{equation}
\label{HR}H_{R}(t)=H_{\ov R}(t)=(1+2t-2t^3)(1-t)^{-1}+(e-3)t\,.
\end{equation}
We prove by induction on $e$ that there exist elements $v_1, \dots, v_{e-3}$ such that  $v_i\in  R_1\cap\socle( R)$ and the ring $T=R/(v_1, \dots, v_{e-3})$ has Hilbert series $(1+2t-2t^3)(1-t)^{-1}$. If $e=3$, then there is nothing to prove. Assume now $e>3$. Since the extension $R\to \ov R$ is flat, we know $\socle(R)\ov R=\socle(\ov R)$. In particular, since $\socle(\ov R)\cap \ov R_1\ne 0$, we see that $\socle(R)\cap R_1\ne 0$. Let $v_1$ be an element in $\socle(R)\cap R_1$ and set $R'=R/(v_1)$.  By \ref{reduction-prelim-Koszul}, we know that $R'$ is not Koszul, since $R$ is not Koszul.  The induction hypothesis applied to $R'$ gives elements  $v_2', \dots, v'_{e-3}$ in $\socle(R')\cap R'_1$ such that the ring $T'=R'/(v_2', \dots, v'_{e-3})$ has Hilbert series $(1+2t-2t^3)(1-t)^{-1}$. Now let $v_2, \dots, v_{e-3}$ denote preimages of $v_2', \dots, v'_{e-3}$ in $R_1$ and set $T=R/(v_1, \dots, v_{e-3})$. Note that $T\cong T'$. In particular, $T$ has Hilbert series $(1+2t-2t^3)(1-t)^{-1}$.  Using the expression for $H_{R}(t)$ in \eqref{HR}, we see that the Hilbert series of the ideal $(v_1, v_2, \dots, v_{e-3})$ is equal to $(e-3)t$, and, in particular, this ideal is annihilated by $R_{\ge 1}$.
\end{proof}

\section{Local rings}
\label{local rings}

We now prove the local version of the Main Theorem. We discuss first some preliminaries.

 In this section, the notation $(R,\m, \kk)$ identifies a local (meaning also commutative and Noetherian) ring $R$ with maximal ideal $\m$ and residue field $\kk=R/\m$.

\begin{bfchunk}{Initial forms.} Let $(R,\m, \kk)$ be a local ring. We denote $\agr R$ the associated graded
ring $\oplus_{i\geq 0}  \m^i/\m^{i+1}$.  Each $a \in R$ has a
natural image in  $\agr R$ which is denoted by $a^*$ and we
call it the {\it initial form} of $a$. More precisely, if $a\neq 0$, then
$a^*=a+\m^{i+1}$, where $i$ is the unique integer such that
$a\in\m^i\setminus \m^{i+1}$; otherwise $a^*=0$.  If $J$ is an ideal of $R$ we denote by
$J^*$ the ideal consisting of the  initial forms of the
elements of $J$.
\end{bfchunk}

\begin{lemma}
\label{Golod when Koszul}
Let $(R,\m,\kk)$ be a local ring and set $A=\agr R$. Assume $A$ is Koszul and there exists a complete intersection of quadrics $P'$ and a surjective Golod  homomorphism of graded $\kk$-algebras $\varphi'\colon P'\to A$.

 There  exists then a local complete intersection ring $P$ with $\codim P=\codim P'$ and a surjective Golod homomorphism of local rings $\varphi\colon P\to \widehat R$.
\end{lemma}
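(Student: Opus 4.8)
The plan is to realize $P'$ as the associated graded ring of a suitable local complete intersection $P$ that maps onto $\widehat R$, and then to deduce the Golod property of the homomorphism $P\to\widehat R$ from Proposition \ref{graded-help}. First I would fix a minimal Cohen presentation $\widehat R=Q/I$, where $(Q,\q,\kk)$ is a complete regular local ring with $\dim Q=\dim_{\kk}\m/\m^{2}$ and $I\subseteq\q^{2}$; then $\agr Q$ is a polynomial ring over $\kk$ in $\dim Q$ variables, and $A=\agr{\widehat R}=\agr Q/I^{*}$. Since $A$ is Koszul it is quadratic, so $I^{*}$ is generated by forms of degree $2$, and its minimal generators form a $\kk$-basis of $(I^{*})_{2}$. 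On the other hand, write $P'=S/J'$ with $S$ a polynomial ring over $\kk$ and $J'=(g_{1},\dots,g_{d})$ generated by a regular sequence of quadrics, so that $d=\codim P'$. By \ref{2-linear}, the hypotheses that $\varphi'$ is Golod and $A$ is Koszul force $\Ker(\varphi')$ to admit a $2$-linear resolution, hence $\Ker(\varphi')\subseteq\m_{P'}^{2}$, and therefore $\dim_{\kk}S_{1}=\dim_{\kk}P'_{1}=\dim_{\kk}A_{1}=\dim Q$. I may thus identify $S$ with $\agr Q$ through an isomorphism compatible with the two surjections onto $A$, so that $\Ker(S\to A)=I^{*}$. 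Since $g_{1},\dots,g_{d}$ are linearly independent elements of $(I^{*})_{2}$, I can choose the minimal generating set $f_{1}^{*},\dots,f_{r}^{*}$ of $I^{*}$ so that $f_{k}^{*}=g_{k}$ for $1\le k\le d$, and for each such $k$ I pick a lift $f_{k}\in I\cap(\q^{2}\smallsetminus\q^{3})$ of $f_{k}^{*}$.

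Next I would carry out the lifting. Because $f_{1}^{*},\dots,f_{d}^{*}$ is a regular sequence in the graded ring $\agr Q$, a theorem of Valabrega and Valla \cite{VV} on form rings gives that $f_{1},\dots,f_{d}$ is a $Q$-regular sequence and that $(f_{1},\dots,f_{d})^{*}=(f_{1}^{*},\dots,f_{d}^{*})$. Set $P=Q/(f_{1},\dots,f_{d})$. Then $P$ is a local complete intersection with $\codim P=d=\codim P'$, and since $(f_{1},\dots,f_{d})\subseteq I$ there is a canonical surjection $\varphi\colon P\to\widehat R$; its kernel lies in the square of the maximal ideal of $P$ because $I\subseteq\q^{2}$. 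Using the standard identity that the associated graded ring of $Q/J$ equals $\agr Q/J^{*}$, together with the equality of initial ideals just recorded, I obtain $\agr P=\agr Q/(f_{1}^{*},\dots,f_{d}^{*})=S/J'=P'$; and since $\varphi$ is induced by the identity map of $Q$, the induced map on associated graded rings $\agr\varphi\colon\agr P\to A$ is the canonical quotient $S/J'\to S/I^{*}$, which under the identifications above is precisely $\varphi'$.

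It remains to apply Proposition \ref{graded-help} to $\varphi\colon P\to\widehat R$. The associated graded ring of $\widehat R$ is $A$, which is Koszul by hypothesis, so $\widehat R$ is Koszul; and $\agr\varphi=\varphi'$ is Golod. Hence $\varphi$ is a Golod homomorphism of local rings, and this exhibits the desired local complete intersection $P$ with $\codim P=\codim P'$ together with a surjective Golod homomorphism $P\to\widehat R$. The genuinely delicate part is the bookkeeping of the first two paragraphs — choosing $Q$, matching the minimal presentations of $P'$ and of $A$, and invoking the Valabrega--Valla theorem to lift simultaneously the regular sequence and the equality $\agr P=P'$; once this is arranged, the Golod conclusion is formal.
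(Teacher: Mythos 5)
Your proposal is correct and follows essentially the same route as the paper: write $\widehat R=Q/I$, identify $P'$ with $\agr Q/J'$ for $J'\subseteq I^*$ generated by a regular sequence of quadrics, lift that sequence to $Q$ via Valabrega--Valla to get a complete intersection $P$ with $\agr P=P'$ and $\agr\varphi=\varphi'$, and conclude with Proposition \ref{graded-help}. The only difference is that you spell out the identification of the presentations of $P'$ and $A$ in more detail than the paper does.
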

\begin{proof}
We write $\widehat R=Q/I$ with $(Q,\q,\kk)$ a regular local ring and
$I\subseteq \q^2$.  We identify  $A$ with $\agr Q/I^*$. Note that $\agr Q$ is a polynomial ring over $\kk$. Since the homomorphism $\varphi'$ in the hypothesis is surjective, we may assume $P'=\agr Q/J'$, where $J'$ is generated by a regular sequence, $J'\subseteq I^*$ and $\varphi'$ is the canonical map.  Let $f_1^*, \dots, f_d^*$ be a regular sequence generating $J'$, where $f_i\in I$ for all $i$,  and consider the ideal $J=(f_1, \dots, f_d)$ of $Q$. Note the inclusion $J\subseteq I$. By Valabrega and Valla \cite[Proposition 2.1]{VV}, we see that $P=Q/J$ is a complete intersection and $J^*=J$. We have thus $\agr P=P'$ hence the canonical map $\varphi\colon P\to Q/I$ satisfies $\agr \varphi=\varphi'$. We use then Proposition \ref{graded-help} to conclude that $\varphi$ is Golod.
\end{proof}

\begin{lemma}
\label{socle} Let $(R,\m,\kk)$  be a local ring and set $A=\agr R$. Assume the following hold:
\begin{enumerate}[\quad\rm(a)]
\item  There  exists an element $u\in A_1$ such that $A_i=uA_{i-1}$ for all $i\ge 3$.
\item $\dim_{\kk}A_i=\dim_{\kk}A_{i+1}\ne 0$ for all $i\ge 3$.
\end{enumerate}
 Then  the graded vector spaces $\socle(A)$ and $(\socle(R))^*$ coincide in degree 1. In particular, if $\socle(R)\subseteq \m^2$, then $A$ has no socle elements in degree $1$.
\end{lemma}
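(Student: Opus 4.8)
The plan is to identify $\bigl((\socle R)^*\bigr)_1$ with the image of the ideal $\socle(R)$ under the canonical surjection $\m\to\m/\m^2=A_1$ (where $A=\agr R$), and to prove the two inclusions $\bigl((\socle R)^*\bigr)_1\subseteq\socle(A)_1$ and $\socle(A)_1\subseteq\bigl((\socle R)^*\bigr)_1$ separately. The first inclusion requires neither hypothesis: if $a\in\socle(R)$, then $a\m=0$, so for every $b\in\m$ one has $a^*\cdot(b+\m^2)=ab+\m^3=0$ in $A_2$; since $A$ is a standard graded $\kk$-algebra, $a^*A_1=0$ already forces $a^*\in\socle(A)$. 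The ``in particular'' clause is then immediate: if $\socle(R)\subseteq\m^2$, the image of $\socle(R)$ in $A_1$ is zero, so $\socle(A)_1=\bigl((\socle R)^*\bigr)_1=0$.

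For the reverse inclusion I would first record two consequences of (a) and (b). Fix a lift $\tilde u\in\m\smallsetminus\m^2$ of $u$. Hypothesis (a) with $i=3$ gives $\m^3=\tilde u\m^2+\m\cdot\m^3$, so Nakayama's lemma, applied to the finitely generated module $\m^3/\tilde u\m^2$, yields
\[
\m^3=\tilde u\m^2 .
\]
Next, for each $i\ge 3$ the multiplication map $A_i\to A_{i+1}$ by $u$ is surjective by (a) and is a map between $\kk$-vector spaces of equal finite dimension by (b), hence bijective; in particular it is injective for every $i\ge 3$. From this I claim the key auxiliary identity
\[
\m^3\cap\ann(\tilde u)=0 .
\]
Indeed, if $0\ne v\in\m^3$, then by Krull's intersection theorem there is a unique $d\ge 3$ with $v\in\m^d\smallsetminus\m^{d+1}$, so the initial form $v^*$ is a nonzero element of $A_d$; injectivity of multiplication by $u$ on $A_d$ gives $u\,v^*\ne 0$ in $A_{d+1}$, and since $u\,v^*$ is by definition the class of $\tilde u v$ in $\m^{d+1}/\m^{d+2}$, this shows $\tilde u v\ne 0$.

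Now take a nonzero $\xi\in\socle(A)_1$ and lift it to $a\in\m\smallsetminus\m^2$ with $a^*=\xi$. The relation $\xi A_1=0$ translates to $a\m\subseteq\m^3$, and the relation $u\,\xi=0$ in $A_2$ translates to $\tilde u a\in\m^3=\tilde u\m^2$; writing $\tilde u a=\tilde u m$ with $m\in\m^2$ and setting $a_1=a-m$, we get an element with $a_1^*=\xi$, $a_1\m\subseteq\m^3$, and $\tilde u a_1=0$. For each minimal generator $x_j$ of $\m$ we then have $a_1x_j\in\m^3$ and $\tilde u(a_1x_j)=x_j(\tilde u a_1)=0$, so $a_1x_j\in\m^3\cap\ann(\tilde u)=0$; hence $a_1\m=0$, i.e.\ $a_1\in\socle(R)$, and $\xi=a_1^*\in\bigl((\socle R)^*\bigr)_1$.

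I expect the only delicate point to be the identity $\m^3\cap\ann(\tilde u)=0$, since this is where both hypotheses are genuinely needed and where one must pass carefully between $R$ and $\agr R$ — concretely, between the vanishing of $\tilde u v$ in $R$ and the vanishing of $u\,v^*$ in $\agr R$, using that a nonzero element of $R$ has a well-defined finite initial degree. The rest is routine bookkeeping with initial forms and Nakayama's lemma.
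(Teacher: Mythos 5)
Your proof is correct and follows essentially the same route as the paper's: the easy inclusion $(\socle R)^*\subseteq\socle(A)$, Nakayama to get $\m^3=\tilde u\m^2$, an adjustment of the lift by an element of $\m^2$ to force $\tilde u a_1=0$, and then the bijectivity of multiplication by $u$ on $A_d$ for $d\ge 3$ to kill the remaining products. The only difference is cosmetic: you package that last step as the clean identity $\m^3\cap\ann(\tilde u)=0$, whereas the paper argues generator by generator with the initial degrees of the products $y_1y_i$.
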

\begin{proof}
The  inclusion $(\socle(R))^*\subseteq \socle(A)$ is clear.

To  prove the reverse inclusion, let $x_1\in
(\socle(A))_1$ . Complete $x_1$ to a basis
$x_1,x_2,\ldots,x_e$ of $A_1$ such that $u=x_e$. Let $y_1,\ldots,y_e\in\m\smallsetminus \m^2$
such that $y_i^*=x_i$ for all $i$. We have thus $y_1^*y_i^*=0$ for all $i$.

The  hypothesis (a) implies $\m^i=y_e\m^{i-1}+\m^{i+1}$ for all $i\ge 3$. Nakayama's Lemma gives:
\begin{equation}
\label{Nak}
\m^i=y_e\m^{i-1}\quad\text{ for all $i\ge 3$.}
\end{equation}

Assume $y_1y_e\ne 0$. Since $y_1^*y_e^*=0$, there exists $p\ge 3$ such that $y_1y_e\in \m^p$. By \eqref{Nak} there exists then $b\in \m^{p-1}$ such that $y_1y_e=by_e$, hence $y_e(y_1-b)=0$. Note that $(y_1-b)^*=y_1^*=x_1$, since $b\in \m^2$. We may replace thus $y_1$ with $y_1-b$ as to assume $y_1y_e=0$.

Assume now $y_1y_e=0$.  Let $2\leq i\leq e$ and assume $y_1y_i\ne 0$.  There  exists then $p\ge 3$ such that $y_1y_i\in \m^p\smallsetminus \m^{p+1}$. The  multiplication map $\m^p/\m^{p+1}\xrightarrow {y_e}\m^{p+1}/\m^{p+2}$ is surjective by \eqref{Nak}. Hypothesis (b) yields that this map is injective as well. Since $y_e(y_1y_i)=(y_1y_e)y_i=0 $, we conclude that $y_1y_i\in \m^{p+1}$, a contradiction.

We proved thus $y_1y_i=0$ for all $i$, hence $y_1\in \socle(R)$ and thus $x_1\in (\socle(R))^*$.
\end{proof}

\begin{theorem}
\label{localv}Let $(R, \m,\kk)$ be a local ring.    Assume $\dim_{\kk}\m^2/\m^3\le 3$ and $\agr R$ is
quadratic.

There exists then a flat homomorphism of local rings $R\to R'$, where $R'$ is a local ring with maximal ideal $R'\m$ and has the property that there exists a surjective Golod homomorphism $P\to R'$ of local rings, with  $P$ a  complete intersection of codimension at most $3$.  Furthermore, if $R$ is Koszul then it is absolutely Koszul.
\end{theorem}

\begin{proof}
There exists a flat ring homomorhism $R\to R'$, where $(R', \m',\kk')$ is a local ring such that $\m'=R'\m$, $R'$ is complete and  $\kk'$ is algebraically closed. Indeed, one can use \cite[Chapter 0, 10.3.1] {Gro} to construct a flat homomorphism to a local ring with an algebraically closed  field, and then follow with the completion homomorphism.

Note that $\agr{R'}\cong \agr R\otimes_{\kk}\kk'$, $R$ is Koszul if and only if $R'$ is Koszul, and, if $R'$ is absolutely Koszul then $R$ is absolutely Koszul, cf.\,\ref{flat-absKoszul}. Thus, for the part of the proof regarding absolute Koszulness, it suffices to show that if $R'$ is Koszul, then $R'$ is absolutely Koszul.   For the purposes of the proof, we may assume thus $R=R'$, hence $R$ is complete and $\kk$ is algebraically closed.  We write
$R=Q/I$ with $(Q,\q,\kk)$ a regular local ring and $I\subseteq \q^2$.

We prove below the existence of the Golod homomorphism. The  fact that $R$ is absolutely Koszul when $R$ is Koszul follows then from \ref{Golod-ci}. In view of \ref{reduction-prelim}, it suffices to assume that $\socle(R)\subseteq \m^2$. We set $A=\agr R=\agr Q/I^*$. Note that $\agr Q$ is a polynomial ring over $\kk$ in $e$ variables, where $e=\dim_{\kk}\m/\m^2=\dim_{\kk}A_1$.

Assume first that $R$, hence $A$, is not Koszul.  Using Theorem \ref{exceptional} we see that the hypotheses of Lemma \ref{socle} are satisfied and  $\dim_{\kk}A_2=3$ . We apply then Lemma \ref{socle} to conclude that $A$ is not a trivial fiber extension. Since $A$ is not Koszul, Theorem \ref{last} gives that $e<4$, and hence $e=3$. We use then \cite[Proposition 6.1]{AKM} as in the proof of the Main Theorem to see that there exists a Golod homomorphism onto $R$ from a complete intersection of codimension at most $3$.

If $R$, hence $A$,  is Koszul, we use the Main Theorem to see there exists a complete intersection of quadrics $P'$  of codimension at most $3$ and a surjective Golod  homomorphism of graded $\kk$-algebras $\varphi'\colon P'\to A$.  We use then  Lemma \ref{Golod when Koszul}.
\end{proof}

\end{document}